\newcommand{\assign}{:=}
\newcommand{\tmop}[1]{\ensuremath{\operatorname{#1}}}
\newcommand{\tmstrong}[1]{\textbf{#1}}
\numberwithin{equation}{section}  
\newtheorem{corollary}{Corollary}
\newtheorem{theorem}{Theorem}
\newtheorem{lemma}{Lemma}
\newcommand{\XXint}[3]{{\setbox}0=\text{\ensuremath{#1 #2 #3 \int}}
{\vcenter{\text{\ensuremath{#2 #3}}}}{\kern}-.5{\tmwd}0}
\newcommand{\opn}[2]{\newcommand{\1}{\}} {\opn}{\Rm{Rm}} {\opn}{\Ric{Ric}}
{\opn}{\Rc{Rc}} {\opn}{\Scal{Sc}} {\opn}{\Tr{Tr}} {\opn}{\Trac{Tr}}
{\opn}detdet {\opn}{\diam{diam}} {\opn}{\dist{dist}} {\opn}{\Im}Im
{\opn}{\div}div {\opn}{\Ker{Ker}} {\opn}expexp {\opn}{\Vol{Vol}}
{\opn}{\exph{exph}} {\opn}{\Herm{Herm}} {\opn}{\End{End}} {\opn}{\Hess{Hess}}
{\opn}{\Vol{Vol}}}
\newcommand{\R}{\mathbb{R}}
\newcommand{\C}{\mathbb{C}}
\newcommand{\I}{\mathbb{I}}
\newcommand{\contract}{{\kern}-1.5pt{\vrule} width6.0pt height0.4pt depth0pt
{\vrule} width0.4pt height4.0pt depth0pt}
\newcommand{\retract}{{\kern}-1.5pt{\vrule} width0.4pt height4.0pt depth0pt
{\vrule} width6.0pt height0.4pt depth0pt}
\newcommand{\Openbox}{{\leavevmode} {\text{{\hfil}{\vrule}
width{\boxrulethickness} {\vbox} to{\Openboxwidth{{\advance}{\Openboxwidth}
-2{\boxrulethickness} {\hrule} height {\boxrulethickness}
width{\Openboxwidth}{\vfil} {\hrule} height{\boxrulethickness}}}{\vrule}
width{\boxrulethickness}{\hfil} }}}
\begin{document}

\title{Variation formulas for the complex components of the Bakry-Emery-Ricci endomorphism}
\author{\\
{\tmstrong{NEFTON PALI}}}\maketitle

\begin{abstract}
  We compute first variation formulas for the complex components of the
  Bakry-Emery-Ricci endomorphism along K\"ahler structures. Our formulas show
  that the principal parts of the variations are quite standard complex
  differential operators with particular symmetry properties on the complex
  decomposition of the variation of the K\"ahler metric. We show as application that
  the Soliton-K\"ahler-Ricci flow generated by the Soliton-Ricci flow
  represents a complex strictly parabolic system of the complex components of
  the variation of the K\"ahler metric.
\end{abstract}
\section{Introduction}
Let $(X,J,g)$ be a compact K\"ahler manifold and $\Omega>0$ be a smooth volume form. 
It is a quite basic fact that the $J$-invariant part of the $\Omega$-Bakry-Emery-Ricci tensor is 
the symmetric form associated to the Chern curvature of the metric induced by $\Omega$ over the anti-canonical bundle. 
We call this differential form the $\Omega$-Ricci form. 

Moreover the $J$-anti-invariant part of the 
$\Omega$-Bakry-Emery-Ricci tensor measures the default of the gradient of $\log \frac{dV_g}{\Omega}$ to be holomorphic.

In this paper we compute first variation formulas for the complex components of the endomorphism associated to the
  $\Omega$-Bakry-Emery-Ricci tensor along natural variations of K\"ahler structures considered in \cite{Pal1}. 
Clearly these complex components correspond to the endomorphisms associated to the $J$-invariant 
and $J$-anti-invariant parts of the $\Omega$-Bakry-Emery-Ricci tensor.

Our formulas show
  that the principal parts of the variations are quite standard complex
  differential operators with particular symmetry properties on the complex
  decomposition of the variation of the K\"ahler metric. (See theorem 1.)

Our formulas will be useful for the study of the pre-scattering problem in the Fano case (see \cite{Pal3}).
We do not expect that the scattering problem in \cite{Pal2} can be solved over general compact manifolds. 

In this paper we focus also in the particular case of variations of K\"ahler structures 
with $3$-symmetric covariant derivative of the variation of the metric. (See formulas (\ref{var-EndOm-RcCx-F}) and (\ref{var-II-cmpx-F}).)
We provide a relatively simple proof of the variation formulas in this case which is independent of the general ones.

The particular variation formulas allow to show a remarkable fact. Namely that the 
Soliton-K\"ahler-Ricci flow in \cite{Pal3} generated by the Soliton-Ricci flow in \cite{Pal2}
  represents a complex strictly parabolic system of the complex components of
  the variation of the K\"ahler metric. (See the evolution equations (\ref{evol-B}) and (\ref{evol-A}).)
  This result can be very usefull for proving uniform estimates along the Soliton-K\"ahler-Ricci flow.
It is known from \cite{Pal2} that the Soliton-Ricci flow represents a strictly parabolic equation.

We explain now the main steps of the proof of the general variation formulas for the complex 
components of the $\Omega$-Bakry-Emery-Ricci endomorphism.

We compute first the variation of the $\Omega$-Ricci form (See lemma \ref{Lm-var-O-Rc-fm}). A crucial point for the proof of 
this formula is is a special choice of complex geodesic coordinates. This choice is allowed by a diagonalizing property of a $U(n)$-action
over the space of symmetric complex matrices in \cite{Ho-Jo}.

The second step is the most difficult and technical part of the proof. It consist to show that the principal part of
variation of the symmetric form associated to the $\Omega$-Ricci form is the operator which appears in the variation of the
$\Omega$-Bakry-Emery-Ricci tensor in \cite{Pal1} acting on the $J$-anti-invariant part of the variation of the K\"ahler metric. 
(See corollary \ref{rm-vr-Rc-fm}.)

This step is also important because it allows to give a good expression of the variation of the $J$-anti-linear part of the 
$\Omega$-Bakry-Emery-Ricci endomorphism. Indeed in general this is not possible by a direct computation.

The third step consist to show some special complex Weitzenb\"ock type formulas (see lemma \ref{Cx-Weitz}) and some symmetry relations 
for standard complex operators.

The final step uses in a crucial way the properties of the variations of K\"ahler structures obtained in \cite{Pal1}. 
These properties are also of key importance for the proof of 
the particular variation formulas (\ref{var-EndOm-RcCx-F}) and (\ref{var-II-cmpx-F}).
\section{The complex components of the Bakry-Emery-Ricci endomorphism}

Let $\Omega > 0$ be a smooth volume form over an oriented Riemannian manifold
$(X, g)$. We define the $\Omega$-Bakry-Emery-Ricci tensor of $g$ as
$$
\tmop{Ric}_g (\Omega) \;\;: = \;\; \tmop{Ric} (g)
   \;\, + \;\, \nabla_g \,d \log \frac{dV_g}{\Omega} \;. 
$$
We define the $\Omega$-divergence operator of a tensor $\alpha$ as
\begin{eqnarray*}
  \tmop{div}^{^{_{_{\Omega}}}}_g \alpha \;\; \assign \;\; e^f \tmop{div}_g \left(
  e^{- f} \alpha \right) \;\;=\;\; \tmop{div}_g \alpha \;\,-\;\, \nabla_g \,f \;\neg\; \alpha\;,
\end{eqnarray*}
with $f \assign \log \frac{d V_g}{\Omega}$. With this notation the first
variation of the $\Omega$-Bakry-Emery-Ricci tensor (see \cite{Pal1}) is given by the
formula
\begin{equation}
  \label{var-Om-Ric} 2\, \frac{d}{d t} \tmop{Ric}_{g_t} (\Omega) \;\;=\;\;
  \tmop{div}^{^{_{_{\Omega}}}}_{g_t}\mathcal{D}_{g_t}  \dot{g}_t\;,
\end{equation}
where $\mathcal{D}_g \assign \hat{\nabla}_g \;-\; 2\, \nabla_g$, with
$\hat{\nabla}_g$ being the symmetrization of
$\nabla_g$ acting on symmetric 2-tensors.
Explicitly
\begin{eqnarray*}
  \hat{\nabla}_g \,\alpha \,(\xi_0, ..., \xi_p) \;\; : = \;\; \sum_{j = 0}^p \nabla_g\,
  \alpha \,(\xi_j, \xi_0, ..., \hat{\xi}_j, ..., \xi_p)\;,
\end{eqnarray*}
for all $p$-tensors $\alpha$. Let now $(X, J)$ be a complex manifold. Then the
volume form $\Omega > 0$ induces a hermitian metric $h_{\Omega}$ over the
canonical bundle $K_{_{X, J}} \assign \Lambda_{_J}^{n, 0} T^{\ast}_{_X} $
given by the formula
$$
 h_{\Omega} (\alpha, \beta) \;\;\assign\;\; \frac{n! \,i^{n^2} \alpha \wedge
   \bar{\beta} }{\Omega} \;. 
$$
By abuse of notations we will denote by $\Omega^{- 1}$ the metric $h_{\Omega}$. 
The dual metric $h_{\Omega}^{\ast}$ on the anti-canonical bundle 
$K^{-1}_{_{X, J}} = \Lambda_{_J}^{n, 0} T_{_X}$ is given by the formula
$$
 h_{\Omega}^{\ast} (\xi, \eta) \;\;=\;\; (- i)^{n^2} \Omega \left( \xi,
   \bar{\eta} \right) / n! \;. 
$$
Abusing notations again, we denote by $\Omega$ the dual metric
$h_{\Omega}^{\ast}$. We define the $\Omega$-Ricci form
$$
\tmop{Ric}_{_J} \left( \Omega \right) \;\;\assign\;\; i\,\mathcal{C}_{\Omega}  \big(
   K^{- 1}_{_{X, J}} \big) \;\;=\;\; - \;\,i\,\mathcal{C}_{\Omega^{- 1}} \big( K_{_{X,
   J}} \big)\;, 
$$
where $\mathcal{C}_h (L)$ denotes the Chern curvature of a hermitian line
bundle. In particular we observe the identity $\tmop{Ric}_{_J} (\omega) =
\tmop{Ric}_{_J} (\omega^n)$. We remind also that for any $J$-invariant
K\"ahler metric $g$ the associated symplectic form $\omega \assign g J$
satisfies the elementary identity
\begin{eqnarray*}
  \tmop{Ric} (g) \;\; = \;\; -\;\, \tmop{Ric}_{_J} (\omega) J\;.
\end{eqnarray*}
Moreover for all twice differentiable function $f$ hold the identity
\begin{equation}
  \label{cx-dec-Hess} \nabla_g \,d\,f \;\;=\;\; - \;\, \left(  i\, \partial_{_J}
  \bar{\partial}_{_J} \,f\right)  J \;\,+\;\, g\, \bar{\partial}_{_{T_{X, J}}}
  \nabla_g \,f\; .
\end{equation}
We infer the decomposition identity
\begin{equation}
  \label{cx-dec-Ric} \tmop{Ric}_g (\Omega) \;\;=\;\; -\;\, \tmop{Ric}_{_J} (\Omega) J \;\,+\;\, g\,
  \bar{\partial}_{_{T_{X, J}}} \nabla_g \log \frac{dV_g}{\Omega} \;.
\end{equation}
Let $v \in S_{_{\mathbbm{R}}}^2 T^{\ast}_X,$ $\alpha \in
\Lambda_{_{\mathbbm{R}}}^2 T^{\ast}_X .$ We define $v^{\ast}_g \assign g^{- 1}
v$ and $\alpha_g^{\ast} \assign \omega^{- 1} \alpha$. With this notations the
decomposition formula (\ref{cx-dec-Ric}) implies
\begin{equation}
  \label{dec-end-Ric} \tmop{Ric}^{\ast}_{g_{_{}}} (\Omega) \;\;=\;\;
  \tmop{Ric}^{\ast}_{_J} (\Omega)_g \;\,+\;\, \bar{\partial}_{_{T_{X, J}}}
  \nabla_g \log \frac{d V_g}{\Omega} \;.
\end{equation}
The sections that will follow are devoted to the study of the first variation
of the two complex components in (\ref{cx-dec-Ric}) and (\ref{dec-end-Ric}).

\section{The first variation of the $\Omega$-Ricci form}

Let $\mathcal{M} \subset C^{\infty} (X, S^2_{_{\mathbbm{R}}} T_X^{\ast})$ be
the space of smooth Riemannian metrics over a compact manifold $X$, let
$\mathcal{J} \subset C^{\infty} (X, \tmop{End}_{_{\mathbbm{R}}} (T_X))$ be the
set of smooth almost complex structures and let
$$
\mathcal{KS} \;\; : = \;\; \Big\{ (J, g) \in
   \mathcal{J} \times \mathcal{M} \hspace{0.25em} \mid \hspace{0.25em}
   \hspace{0.25em} g \;\, = \;\, J^{\ast} g\,J
   \;, \;\;  \nabla_g \,J \;\,
   = \;\, 0 \;\Big\} \;, 
$$
be the space of K\"ahler structures. We remind that if $A \in
\tmop{End}_{_{\mathbbm{R}}} (T_X)$ then its transposed $A^T_g$ with respect to
$g$ is given by $A^T_g = g^{- 1} A^{\ast} g$. We observe that the
compatibility condition $g = J^{\ast} gJ$ is equivalent to the condition
$J^T_g = - J$. We show now an elementary formula.

\begin{lemma}
  Let $(g_t, J_t)_t \subset \mathcal{K}\mathcal{S}
  \text{}$ be a smooth path and let $\xi$ be a smooth vector field. Then hold
  the first variation identity for the $\bar{\partial}$-operator acting
  on vector fields
  \begin{equation}
    \label{var-dbar-vflds} 2 \left( \frac{d}{dt} \hspace{0.25em}
    \bar{\partial}_{_{T_{X, J_t}}} \right) \xi 
\;\;=\;\; -\;\, \xi \; \neg \; J_t
    \nabla_{g_t} \dot{J}_t  \;\,+\;\, J_t \nabla_{g_t}
    \xi \hspace{0.25em}  \dot{J}_t \;\,+\;\, \dot{J}_t \nabla_{g_t} \xi
    \hspace{0.25em} J_t \;.
  \end{equation}
\end{lemma}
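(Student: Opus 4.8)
The plan is to differentiate the defining relation of $\bar{\partial}_{_{T_{X,J}}}$ in terms of $J$ and $\nabla_g$, and then simplify using the Kähler condition $\nabla_g J = 0$. Recall that for a vector field $\xi$ the operator $\bar{\partial}_{_{T_{X,J}}}$ extracts the $J$-anti-holomorphic part of the covariant derivative; concretely, acting on a tangent vector $\eta$ one has $2\,\bar{\partial}_{_{T_{X,J}}}\xi\,(\eta) = \nabla_g\xi\,(\eta) + J\,\nabla_g\xi\,(J\eta)$, i.e. $2\,\bar{\partial}_{_{T_{X,J}}}\xi = \nabla_g\xi + J(\nabla_g\xi)J$ as an endomorphism-valued expression (the ambiguity of whether to differentiate $\xi$ via $\nabla_g$ or to use the de Rham differential is immaterial here since the torsion-free Levi-Civita connection makes the antisymmetric part the same; on a Kähler manifold the $(0,1)$-part is in any case connection-independent). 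I would first fix this sign and placement convention precisely by matching it against \eqref{cx-dec-Hess}, which is the special case $\xi = \nabla_g f$.

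Next I would simply apply $\tfrac{d}{dt}$ to the identity $2\,\bar{\partial}_{_{T_{X,J_t}}}\xi = \nabla_{g_t}\xi + J_t(\nabla_{g_t}\xi)J_t$. The Leibniz rule produces four groups of terms: the variation of the connection, $\dot\nabla_{g_t}\xi$, appearing once by itself and once conjugated by $J_t$ on both sides; and the two terms $\dot{J}_t(\nabla_{g_t}\xi)J_t + J_t(\nabla_{g_t}\xi)\dot{J}_t$ coming from differentiating the two explicit factors of $J_t$. The last two are already exactly the second and third terms on the right-hand side of \eqref{var-dbar-vflds}, so the whole content of the lemma is that the connection-variation contribution, $\dot\nabla_{g_t}\xi + J_t(\dot\nabla_{g_t}\xi)J_t$, collapses to $-\,\xi\neg\, J_t\nabla_{g_t}\dot J_t$.

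To handle that, I would use the Kähler hypothesis. Differentiating $\nabla_{g_t}J_t = 0$ gives $\dot\nabla_{g_t}\cdot J_t + \nabla_{g_t}\dot J_t = 0$, where $\dot\nabla$ is the (tensorial) variation of the Christoffel symbols acting as a derivation; contracting against $\xi$ this reads, schematically, $(\dot\Gamma_t * \xi)J_t + J_t(\dot\Gamma_t*\xi) = -\,\xi\neg\,\nabla_{g_t}\dot J_t$ after using $\nabla_{g_t}J_t=0$ to move $J_t$ through $\nabla_{g_t}$ in the first term. The point is that $\dot\Gamma_t$ acts on the vector-field slot of $\nabla_{g_t}\xi$, and $\dot\nabla_{g_t}\xi + J_t(\dot\nabla_{g_t}\xi)J_t$ can be rewritten, again using $\nabla J = 0$, as $\xi$ contracted into precisely $\dot\Gamma_t + J_t\,\dot\Gamma_t\,J_t$ acting in the appropriate slot; identifying this with $-\,\xi\neg\,J_t\nabla_{g_t}\dot J_t$ is then a matter of pairing the Kähler-variation identity with one more factor of $J_t$ (using $J_t^2=-1$).

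The main obstacle I anticipate is purely bookkeeping: keeping straight, among the several index slots of $\nabla_{g_t}\xi$ (regarded as a $(1,1)$-tensor or an endomorphism), which slot $\dot\Gamma_t$ acts on, and making sure the conjugation by $J_t$ in the definition of $\bar{\partial}_{_{T_{X,J_t}}}$ is applied to the correct slot, so that the $J_t$'s line up to produce the single clean term $-\,\xi\neg\,J_t\nabla_{g_t}\dot J_t$ rather than a sum of several terms. Working in normal coordinates for $g_t$ at a point (so that $\Gamma_t$, but not $\dot\Gamma_t$, vanishes) should make this transparent and reduce the computation to a short symbol manipulation; the torsion-free and metric-compatibility properties guarantee the answer is tensorial and hence coordinate-independent.
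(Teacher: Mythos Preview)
Your proposal is correct and follows essentially the same route as the paper: differentiate $2\,\bar{\partial}_{_{T_{X,J_t}}}\xi=\nabla_{g_t}\xi+J_t(\nabla_{g_t}\xi)J_t$, observe that the two $\dot J_t$ terms already match, and reduce the remaining task to showing $\dot{\nabla}_{g_t}\xi+J_t(\dot{\nabla}_{g_t}\xi)J_t=-\,\xi\neg J_t\nabla_{g_t}\dot J_t$ by differentiating $\nabla_{g_t}J_t=0$. The one point your sketch leaves implicit and the paper makes explicit is that the symmetry $\dot{\nabla}_{g_t}(\eta,\mu)=\dot{\nabla}_{g_t}(\mu,\eta)$ (your ``torsion-free'' remark) is exactly what lets you swap the slot in which $\xi$ sits so that the $J_t$-conjugation from the $\bar\partial$-definition lines up with the $J_t$ appearing in the differentiated K\"ahler identity; once you use that swap and multiply by $J_t$, the bookkeeping you anticipate resolves itself without needing normal coordinates.
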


\begin{proof}
  The fact that in the K\"ahler case the Chern connection coincides with the
  Levi-Civita connection implies 
$$
2 \,
  \bar{\partial}_{_{T_{X, J_t}}} \xi \;\;=\;\; \nabla_{g_t} \xi \;\,+\;\, J_t
  \nabla_{g_t} \xi \hspace{0.25em} J_t\;.
$$
Time deriving this identity we infer
  \begin{equation}
    \label{vr-dbar-Vfld} 2 \left( \frac{d}{dt} \hspace{0.25em}
    \bar{\partial}_{_{T_{X, J_t}}} \right) \xi 
\;\;=\;\;
\dot{\nabla}_{g_t} \xi \;\,+\;\,
    \dot{J}_t \nabla_{g_t} \xi \hspace{0.25em} J_t 
\;\,+\;\,
J_t \dot{\nabla}_{g_t} \xi \hspace{0.25em} J_t
    \;\,+\;\, J_t \nabla_{g_t} \xi \hspace{0.25em}
    \dot{J}_t \; .
  \end{equation}
  On the other hand time deriving the K\"ahler condition $\nabla_{g_t} J_t =
  0$ we get the identity
$$
\dot{\nabla}_{g_t} J_t \;\,+\;\, \nabla_{g_t}  \dot{J}_t \;\;=\;\; 0\;, 
$$
which writes explicitly as
  \begin{equation}
    \label{der-Kah-cnd}  \dot{\nabla}_{g_t} (\xi, J_t \eta) \;\, -\;\,
     J_t \dot{\nabla}_{g_t} (\xi, \eta) \;\, +\;\,
     \nabla_{g_t}  \dot{J}_t (\xi, \eta) \;\; =\;\;0\; .
  \end{equation}
We remind now that the tensor $\dot{\nabla}_{g_t}$ is symmetric (see \cite{Bes} or
  the identity (4) in \cite{Pal1}). Thus the identity (\ref{der-Kah-cnd})
  rewrites as
$$
 \dot{\nabla}_{g_t} (J_t \eta, \xi) \;\, - \;\, J_t
     \dot{\nabla}_{g_t} (\eta, \xi) \;\,+ \;\,
     \nabla_{g_t} \dot{J}_t (\xi, \eta) \;\;=\;\; 0
     \hspace{0.25em}, 
$$
  which multiplied by $J_t$, is equivalent to the identity
$$
 \dot{\nabla}_{g_t} (\eta, \xi) \;\, + \;\, J_t
     \dot{\nabla}_{g_t} (J_t \eta, \xi) \;\, + \;\, J_t
     \nabla_{g_t}  \dot{J}_t (\xi, \eta) \;\; = \;\; 0\;, 
$$
  i.e to the identity
$$
\dot{\nabla}_{g_t} \xi \;\,+\;\, J_t \dot{\nabla}_{g_t} \xi \hspace{0.25em}
  J_t \;\;=\;\; - \;\, \xi \;\neg\; J_t \nabla_{g_t}  \dot{J}_t \;.
$$
Plunging this
  in to the identity (\ref{vr-dbar-Vfld}) we infer the variation formula
  (\ref{var-dbar-vflds}).
\end{proof}

We introduce now a few useful notations. For any section $S \in C^{\infty}
(X, \tmop{End}_{_{\R}} (T_{_X}))$ and any $\xi, \eta \in T_X$ we define the
complex operators
\begin{eqnarray*}
  \nabla_{g, J}^{1, 0} \,S\, (\xi, \eta) & : = &  \frac{1}{2} \, \Big[ \nabla_g\, S\,
  (\xi, \eta) \;\,- \;\, J\, \nabla_g \,S\, (J \xi, \eta)
  \Big] \;,\\
  &  & \\
  \nabla_{g, J}^{0, 1} \,S\, (\xi, \eta) & : = &  \frac{1}{2} \, \Big[ \nabla_g \,S\,
  (\xi, \eta) \;\,+ \;\, J\, \nabla_g \,S\, (J \xi, \eta)
  \Big] \; .
\end{eqnarray*}
We define also the $J$-anti-linear operator 
$$
\nabla_{g, J}^{0, 1} \,S \cdot
 \eta \;\;: =\;\; \nabla_{g, J}^{0, 1} \,S\, (\cdot, \eta)\;.
$$
We show now the following first variation formula for the
$\Omega$-Ricci form.
\begin{lemma}
  \label{Lm-var-O-Rc-fm}Let $(g_t, J_t)_t \subset
  \mathcal{K}\mathcal{S}$ be a smooth path such that $\dot{J}_t = (
  \dot{J}_t)_{g_t}^T$ and let $\Omega > 0$ be a smooth volume form over $X$. Then hold the first
  variation formula
  \begin{equation}
    \label{vr-O-Rc-fm} 2 \,\frac{d}{d t} \tmop{Ric}_{_{J_t}} (\Omega) \;\;=\;\; d  \tmop{div}^{^{_{_{\Omega}}}}_{g_t} \big(g_t \dot{J_t}\big)\; .
  \end{equation}
\end{lemma}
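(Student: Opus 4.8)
The plan is to differentiate the defining formula $\tmop{Ric}_{_{J_t}}(\Omega) = i\,\mathcal{C}_{\Omega}(K^{-1}_{_{X,J_t}})$, which expresses the $\Omega$-Ricci form as the Chern curvature of the anti-canonical bundle equipped with the metric induced by the fixed volume form $\Omega$. In a local $J_t$-holomorphic frame, this curvature is $-\partial_{_{J_t}}\bar{\partial}_{_{J_t}}\log\varphi_t$, where $\varphi_t$ is the local weight of the metric $h_\Omega^\ast$ relative to the frame; the subtlety is that the holomorphic frame itself moves with $t$ because $J_t$ varies, so $\dot\varphi_t$ has two contributions — one from the variation of the volume density $dV_{g_t}/\Omega$ and one from the variation of the almost complex structure. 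First I would fix, at a given point $p$ and time $t_0$, a carefully chosen coordinate system: complex geodesic coordinates for $(J_{t_0},g_{t_0})$ normalized so that the symmetric complex matrix $\dot J_{t_0}$ (which is symmetric with respect to $g_{t_0}$ by hypothesis, hence represents a $J$-anti-linear endomorphism) is put into a convenient diagonal-type normal form using the $U(n)$-diagonalization of symmetric complex matrices referenced from \cite{Ho-Jo}. This reduces the computation of $\frac{d}{dt}\partial_{_{J_t}}\bar{\partial}_{_{J_t}}\log\varphi_t$ at $p$ to a manageable number of terms.

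The next step is to identify the two pieces of $\dot\varphi_{t_0}$ explicitly. The density piece contributes, after applying $i\partial_{_J}\bar{\partial}_{_J}$, the term $\frac{1}{2}i\partial_{_J}\bar{\partial}_{_J}\tmop{Tr}_g(\dot g\,g^{-1}) \cdot(\text{something})$ — more precisely the variation of $\log\frac{dV_{g_t}}{\Omega}$ is $\frac12\langle \dot g_{t},g_{t}^{-1}\rangle$ — while the frame-motion piece produces a term built from $\dot J_{t_0}$ and the connection. I would then reassemble these, using that $d = \partial_{_J} + \bar{\partial}_{_J}$ and that $\tmop{Ric}_{_J}(\Omega)$ is a real $(1,1)$-form, into the claimed divergence expression. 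The key algebraic input is the identity $2\,\bar{\partial}_{_{T_{X,J}}}\xi = \nabla_g\xi + J\nabla_g\xi\, J$ used in the previous lemma, together with the Kähler identity $\nabla_g J = 0$, which lets one commute $\nabla_g$ past $J$ freely and convert raw covariant derivatives of $g\dot J_t$ into the $\Omega$-divergence $\tmop{div}^{^{_{_{\Omega}}}}_{g_t}(g_t\dot J_t) = \tmop{div}_{g_t}(g_t\dot J_t) - \nabla_{g_t}\!\log\tfrac{dV_{g_t}}{\Omega}\,\neg\,(g_t\dot J_t)$; the second term here is exactly what accounts for the density contribution to $\dot\varphi_{t_0}$, which is the conceptual reason the formula comes out in $\Omega$-divergence rather than plain-divergence form.

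Finally I would take $d$ of the resulting one-form. Since the computation is done pointwise in normal coordinates where first derivatives of the frame vanish, one has to be careful that the expression being exterior-differentiated is tensorial; I would phrase the outcome invariantly first — namely that $2\frac{d}{dt}\tmop{Ric}_{_{J_t}}(\Omega)$ equals $d$ applied to the one-form dual to $\tmop{div}^{^{_{_{\Omega}}}}_{g_t}(g_t\dot J_t)$ — and only then verify it in coordinates. I expect the main obstacle to be the bookkeeping of the frame-motion term: one must show that when the moving holomorphic frame is differentiated, the resulting contribution to $i\partial_{_J}\bar\partial_{_J}\log\varphi_t$ is precisely $d$ of the $\dot J_t$-part of the divergence and not something with an extra curvature correction. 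This is where the geodesic normalization and the symmetry hypothesis $\dot J_t = (\dot J_t)^T_{g_t}$ are essential: the symmetry kills the would-be $(2,0)+(0,2)$ obstruction, and the vanishing of Christoffel symbols at $p$ removes spurious lower-order terms, leaving a clean total derivative.
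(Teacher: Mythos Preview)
Your outline matches the paper's approach: differentiate the local connection $1$-form $\alpha_t$ of $(K^{-1}_{X,J_t},\Omega)$ in $J_{t_0}$-holomorphic $g_{t_0}$-geodesic coordinates, feed in the variation formula for $\bar\partial_{T_{X,J_t}}$ from the preceding lemma, and exploit the $U(n)$-diagonalization of the symmetric complex matrix representing $\dot J_{t_0}(x_0)$. But your account of the decisive step --- why the curvature correction disappears --- is not the right mechanism, and as written would leave a gap.

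It is not a ``$(2,0)+(0,2)$ obstruction'' that the symmetry hypothesis kills. After computing $\dot\alpha_t$ and taking $d$, a genuine curvature term survives, namely
\[
2\,\Re e\Big[\,\xi_t^{-1}\cdot\textstyle\sum_l \xi^{1,0}_{1,t}\wedge\cdots\wedge\big(\dot J_t\,\mathcal{R}_{g_t}(\eta,\mu)\,\xi_l\big)^{1,0}_{J_t}\wedge\cdots\wedge\xi^{1,0}_{n,t}\Big].
\]
In the special basis where the coefficients $C_l$ of $\dot J_{t_0}(x_0)$ are \emph{real} and diagonal --- this reality is exactly what the $U(n)$-action on symmetric complex matrices from \cite{Ho-Jo} buys, beyond mere diagonalization --- the term reads $2\,\Re e\big[C_l\,\varphi_{k,\bar j,l,\bar l}(\eta_k\bar\mu_j - \mu_k\bar\eta_j)\big]$, and this vanishes because the bracketed expression is purely imaginary: conjugating and using the hermitian symmetry $\overline{\varphi_{k,\bar j,l,\bar l}} = \varphi_{j,\bar k,l,\bar l}$ of the K\"ahler curvature flips its sign. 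So it is the reality of the $C_l$, not merely the $g_t$-symmetry of $\dot J_t$ (which only makes the matrix complex-symmetric and hence amenable to that normal form), that makes the curvature term drop out. Geodesic normalization alone does nothing here --- this is a second-order curvature term, not a first-order Christoffel artifact.

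A second ingredient your outline omits is the symmetry $\nabla^{0,1}_{g_t,J_t}\dot J_t\in S^2_{\mathbb{R}}T^\ast_X\otimes T_X$ (equivalently $\bar\partial_{T_{X,J_t}}\dot J_t\equiv 0$), a consequence of the K\"ahler-path condition recorded in \cite{Pal1}. The paper uses it to reorganize $\dot\alpha_t$ into $\frac12\big(\tmop{Tr}_{\mathbb{R}}\nabla_{g_t}\dot J_t - df_t\cdot\dot J_t\big)(\eta)$ plus an exact term and the curvature piece above, before taking $d$. Finally, your identification of the ``density piece'' is slightly off: the $-\nabla_{g_t}f_t\,\neg\,(g_t\dot J_t)$ part of the $\Omega$-divergence does not come from the variation $\dot f_t = \frac12\tmop{Tr}_{g_t}\dot g_t$ but from the term $\frac12(\dot J_t\eta)\cdot\log\big[\Omega(\xi_t,\bar\xi_t)/i^{n^2}\big]$ in $\dot\alpha_t$, after rewriting $\Omega = e^{-f_t}dV_{g_t}$.
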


\begin{proof}
  We remind first a general identity. Let $(L, \bar{\partial}_L,
  h)$ be a hermitian holomorphic line bundle over a complex manifold $(X, J)$
  and let $D_{L, h} \,=\, \partial_{L, h} \,+\, \bar{\partial}_L$ be the induced
  Chern connection. We observe that for any local non-vanishing section
  $\sigma \in C^{\infty} (U, L \smallsetminus 0)$ over an open set $U\subset
X$, hold the identity
  \begin{eqnarray*}
    \sigma^{- 1} \,\partial_{L, h} \sigma (\eta) & = &  | \sigma |_h^{- 2}\,
    h \big( \partial_{L, h} \sigma (\eta), \sigma \big)\\
    &  & \\
    & = & | \sigma |_h^{- 2} \Big[ \eta_{_J}^{1, 0} . \,| \sigma |_h^2 \;\,-\;\, h \big(
    \sigma, \bar{\partial}_L \sigma (\eta) \big)\Big]\\
    &  & \\
    & = & \eta_{_J}^{1, 0} . \,\log | \sigma |_h^2 \;\,-\;\, \overline{\sigma^{- 1}\,
    \bar{\partial}_L \sigma (\eta)}\;,
  \end{eqnarray*}
  for all $\eta \in T_X$. We infer the formula
  \begin{eqnarray*}
    i \,\sigma^{- 1} D_{L, h} \sigma (\eta) & = &  i\, \eta_{_J}^{1, 0} .\,
    \log | \sigma |_h^2 \;\,+\;\, 2\, \Re e \big( i\, \sigma^{- 1} \,\bar{\partial}_L
    \sigma (\eta) \big) \;.
  \end{eqnarray*}
  In the case $L = K_{_{X, J_t}}^{- 1}$ and $h \equiv \Omega$ we get for all
$$
 \xi_t \;\;=\;\; \xi^{1, 0}_{1, t} \wedge \ldots \wedge \xi^{1, 0}_{n, t} \;\;\in\;\;
     C^{\infty} \left( U, K_{_{X, J_t}}^{- 1} \smallsetminus 0 \right)\;,\quad \xi_j
     \;\;\in\;\; C^{\infty} \left( U, T_X \right)\;, 
$$
  and all $\eta \in C^{\infty} \left( U, T_X \right)$ the formula for the
  $1$-form $\alpha_t$
  \begin{eqnarray*}
    \alpha_t (\eta) &\assign& i\, \xi_t^{- 1} D_{K_{_{X, J_t}}^{- 1}, \Omega}
    \,\xi_t (\eta) 
\\
\\
& = &  i\, \eta_{_{J_t}}^{1, 0} . \,\log \left[ \Omega
    \left( \xi_t, \bar{\xi}_t \right) / i^{n^2} \right] 
\;\,+\;\, 2\, \Re e \Big( i\,
    \xi_t^{- 1}\, \bar{\partial}_{K_{_{X, J_t}}^{- 1}, \Omega} \,\xi_t \,(\eta)
    \Big) \;.
  \end{eqnarray*}
  We remind also the local expression
  \begin{eqnarray*}
    \tmop{Ric}_{_{J_t}} (\Omega) \;\; = \;\; i\, D^2_{K_{_{X, J_t}}^{- 1}, \Omega} \;\;=\;\; d
    \alpha_t\;.
  \end{eqnarray*}
Let now $f_t \assign \log \frac{d V_{g_t}}{\Omega}$.  We fix an arbitrary space time point $(x_0, t_0)$ and we choose arbitrary
  $g_{t_0}$-geodesic and $J_{t_0}$-holomorphic coordinates $(z_1, \ldots,
  z_n)$ centered at the point $x_0$ such that $\xi_k =
  \frac{\partial}{\partial x_k} .$ A particular choice of a $J_{t_0}
  (x_0)$-complex basis $\left( \xi_k (x_0) \right)_k$ which is also
  $\omega_{t_0} (x_0)$-orthonormal (with $\omega_{t_0} \assign g_{t_0}
  J_{t_0}$) will be made at the end of the proof. We expand now the time
  derivative $\dot{\alpha}_t .$ \ Indeed at the time $t_0$ hold the equalities
  \begin{eqnarray*}
    \dot{\alpha}_t (\eta) & = & \frac{1}{2} \, (\dot{J}_t \eta)\, .\, \log \left[
    \Omega \left( \xi_t, \bar{\xi}_t \right) / i^{n^2} \right] \\
    &  & \\
    & + & \frac{1}{2} \,\eta_{_{J_t}}^{1, 0} \,.\, \sum_{l = 1}^n  \;\frac{\Omega
    \left( \xi^{1, 0}_{1, t} \wedge \ldots \wedge \left( \dot{J}_t \xi_l
    \right)_{J_t}^{1, 0} \wedge \ldots \wedge \xi^{1, 0}_{n, t},
    \bar{\xi}_t \right) }{\Omega (\xi_t, \bar{\xi}_t)}\\
    &  & \\
    & - & \frac{1}{2} \,\eta_{_{J_t}}^{1, 0} \,.\, \sum_{l = 1}^n  \;\frac{\Omega
    \left( \xi_t, \xi^{0, 1}_{1, t} \wedge \ldots \wedge \left( \dot{J}_t
    \xi_l \right)_{J_t}^{0, 1} \wedge \ldots \wedge \xi^{0, 1}_{n, t}  \right)
    }{\Omega (\xi_t, \bar{\xi}_t)}\\
    &  & \\
    & + & 2 \,\Re e \left[  \frac{d}{d t} \left( i\, \xi_t^{- 1}\,
    \bar{\partial}_{K_{_{X, J_t}}^{- 1}, \Omega} \,\xi_t \,(\eta) \right)
    \right] \\
    &  & \\
    & = & - \;\,\frac{1}{2} \,d\, f_t \cdot \dot{J}_t \eta \;\,+\;\, \frac{1}{2}\,  (\dot{J}_t
    \eta)\, .\, \log \left[ d V_{g_t}  \left( \xi_t, \bar{\xi}_t \right) /
    i^{n^2} \right] \\
    &  & \\
    & + & \frac{1}{2} \,\eta_{_{J_t}}^{1, 0} \,.\, \left( \xi^{- 1}_t \cdot \sum_{l
    = 1}^n \;\xi^{1, 0}_{1, t} \wedge \ldots \wedge \left( \dot{J}_t \xi_l
    \right)_{J_t}^{1, 0} \wedge \ldots \wedge \xi^{1, 0}_{n, t}  \right)\\
    &  & \\
    & - & \frac{1}{2} \,\eta_{_{J_t}}^{1, 0} \,.\, \left( \bar{\xi}_t^{- 1}
    \cdot \sum_{l = 1}^n \;\xi^{0, 1}_{1, t} \wedge \ldots \wedge \left(
    \dot{J}_t \xi_l \right)_{J_t}^{0, 1} \wedge \ldots \wedge \xi^{0, 1}_{n,
    t} \right)\\
    &  & \\
    & + & \Re e \left[ \xi^{- 1}_t \cdot \sum_{l = 1}^n \xi^{1, 0}_{1, t}
    \wedge \ldots \wedge T \wedge \ldots \wedge \xi^{1, 0}_{n, t}  \right],
  \end{eqnarray*}
with 
$$
T\;\;:=\;\;\left( \nabla_{g_t}  \dot{J}_t (\xi_l, \eta) \;\,-\;\,
    \nabla_{g_t} \xi_l ( \dot{J}_t \eta) \;\,+\;\, \dot{J}_t \nabla_{g_t} \xi_l (\eta)
    \right)_{J_t}^{1, 0}\;,
$$
  thanks to the fact that $\bar{\partial}_{_{T_{X, J_t}}} \xi_l \equiv 0$
  at time $t_0$ and thanks to the variation formula (\ref{var-dbar-vflds}). 
We define now the real $1$-form $\tmop{Tr}_{_{\mathbbm{R}}} \nabla_{g_t} 
\dot{J_t}$ by the formula
$$
 \left( \tmop{Tr}_{_{\mathbbm{R}}} \nabla_{g_t}  \dot{J_t} \right) (\xi)
   \;\;\assign\;\; \tmop{Tr}_{_{\mathbbm{R}}} \nabla_{g_t}  \dot{J_t} (\cdot,
   \xi)\;,\quad \forall\;\,\xi \;\;\in\;\; T_X\; . 
$$
Expanding further the last expression of $\dot{\alpha}_t (\eta)$, simplifying and using the identity
  \begin{eqnarray*}
    \tmop{Tr}_{_{\mathbbm{R}}} A \;\; = \;\; 2\, \Re e \left(
    \tmop{Tr}_{_{\mathbbm{C}}} A_{_J}^{1, 0} \right)\;,\quad \forall \;\,A \;\;\in\;\;
    \tmop{End}_{_{\mathbbm{R}}} (T_X)\;,
  \end{eqnarray*}
we infer the following expression
  \begin{eqnarray*}
    \dot{\alpha}_t (\eta) & = & - \;\,\frac{1}{2} \,d\, f_t \cdot \dot{J}_t \eta\\
    &  & \\
    & + &  \frac{1}{2} \,\eta_{_{J_t}}^{1, 0} \,. \left( \xi^{- 1}_t \cdot
    \sum_{l = 1}^n \xi^{1, 0}_{1, t} \wedge \ldots \wedge \left( \dot{J}_t
    \xi_l \right)_{J_t}^{1, 0} \wedge \ldots \wedge \xi^{1, 0}_{n, t} 
    \right)\\
    &  & \\
    & - & \frac{1}{2}  \,\bar{\xi}_t^{- 1} \cdot \sum_{l = 1}^n \;\xi^{0,
    1}_{1, t} \wedge \ldots \wedge \left( \nabla_{g_t, J_t}^{0, 1}  \dot{J}_t
    (\eta, \xi_l) \right)_{J_t}^{0, 1} \wedge \ldots \wedge \xi^{0, 1}_{n,
    t}\\
    &  & \\
    & + & \frac{1}{2}  \left( \tmop{Tr}_{_{\mathbbm{R}}} \nabla_{g_t} 
    \dot{J_t} \right) (\eta)\\
    &  & \\
    & + & \Re e \left[ \xi^{- 1}_t \cdot \sum_{l = 1}^n \;\xi^{1, 0}_{1, t}
    \wedge \ldots \wedge H \wedge
    \ldots \wedge \xi^{1, 0}_{n, t}  \right] \;,
  \end{eqnarray*}
with
$$
H\;\;:=\;\;\left( \nabla_{g_t, J_t}^{0, 1}  \dot{J}_t (\xi_l,
    \eta) + \dot{J}_t \nabla_{g_t} \xi_l (\eta) \right)_{J_t}^{1, 0}\;.
$$
  We rearrange the previous expression of $\dot{\alpha}_t (\eta)$ by means of the symmetry
  \begin{equation}
    \label{Tg-cx-str} \nabla_{g_t, J_t}^{0, 1}  \dot{J}_t \;\;\in\;\;
    S_{_{\mathbbm{R}}}^2 T^{\ast}_X \otimes T_X\;,
  \end{equation}
  (see lemma 7 in \cite{Pal1}) in order to get at the time $t_0$ the
  identities,
  \begin{eqnarray*}
    \dot{\alpha}_t (\eta) & = & \frac{1}{2}  \left( \tmop{Tr}_{_{\mathbbm{R}}}
    \nabla_{g_t}  \dot{J_t} \;\,-\;\, d\, f_t \cdot \dot{J}_t \right) (\eta)  \\
    &  & \\
    & + & \frac{1}{2} \,\eta_{_{J_t}}^{1, 0} \,.\, \left( \xi^{- 1}_t \cdot \sum_{l
    = 1}^n \;\xi^{1, 0}_{1, t} \wedge \ldots \wedge \left( \dot{J}_t \xi_l
    \right)_{J_t}^{1, 0} \wedge \ldots \wedge \xi^{1, 0}_{n, t}  \right)\\
    &  & \\
    & + & \frac{1}{2} \,\xi^{- 1}_t \cdot \sum_{l = 1}^n \;\xi^{1, 0}_{1, t}
    \wedge \ldots \wedge \left( \nabla_{g_t, J_t}^{0, 1}  \dot{J}_t (\eta,
    \xi_l) \right)_{J_t}^{1, 0} \wedge \ldots \wedge \xi^{1, 0}_{n, t}\\
    &  & \\
    & + & \Re e \left[ \xi^{- 1}_t \cdot \sum_{l = 1}^n \;\xi^{1, 0}_{1, t}
    \wedge \ldots \wedge \left( \dot{J}_t \nabla_{g_t} \xi_l (\eta)
    \right)_{J_t}^{1, 0} \wedge \ldots \wedge \xi^{1, 0}_{n, t}  \right]\\
    &  & \\
    & = & 
\frac{1}{2}  \left( \tmop{Tr}_{_{\mathbbm{R}}}
    \nabla_{g_t}  \dot{J_t} \;\,-\;\, d\, f_t \cdot \dot{J}_t \right) (\eta) \\
    &  & \\
    & + & \eta \,. \left( \xi^{- 1}_t \cdot \sum_{l = 1}^n \;\xi^{1, 0}_{1, t}
    \wedge \ldots \wedge \left( \dot{J}_t \xi_l \right)_{J_t}^{1, 0} \wedge
    \ldots \wedge \xi^{1, 0}_{n, t}  \right) \\
    &  & \\
    & + & \Re e \left[ \xi^{- 1}_t \cdot \sum_{l = 1}^n \;\xi^{1, 0}_{1, t}
    \wedge \ldots \wedge \left( \dot{J}_t \nabla_{g_t} \xi_l (\eta)
    \right)_{J_t}^{1, 0} \wedge \ldots \wedge \xi^{1, 0}_{n, t}  \right] \;.
  \end{eqnarray*}
  We take now two vector fields $\eta, \mu \in C^{\infty} (U, T_X)$ with
  constant coefficients with respect to the coordinates $(z_1, \ldots, z_n)$.
  Then at the space time point $(x_0, t_0)$ hold the identity
  \begin{eqnarray*}
    &&2 \left( \frac{d}{d t} \tmop{Ric}_{_{J_t}} (\Omega) \right) (\eta, \mu) 
\\
\\
&
    = &  \left[ d \left( \tmop{Tr}_{_{\mathbbm{R}}} \nabla_{g_t}  \dot{J}_t
    \;\, - \;\, d\, f_t \cdot \dot{J}_t\right)\right] (\eta, \mu)\\
    &  & \\
    & + & 2\, \Re e \left[ \xi^{- 1}_t \cdot \sum_{l = 1}^n \;\xi^{1, 0}_{1, t}
    \wedge \ldots \wedge \left( \dot{J}_t \nabla_{g_t, \eta} \nabla_{g_t, \mu}\,
    \xi_l \right)_{J_t}^{1, 0} \wedge \ldots \wedge \xi^{1, 0}_{n, t} 
    \right]\\
    &  & \\
    & - & 2\, \Re e \left[ \xi^{- 1}_t \cdot \sum_{l = 1}^n \;\xi^{1, 0}_{1, t}
    \wedge \ldots \wedge \left( \dot{J}_t \nabla_{g_t, \mu} \nabla_{g_t, \eta}\,
    \xi_l \right)_{J_t}^{1, 0} \wedge \ldots \wedge \xi^{1, 0}_{n, t}  \right]
    \\
    &  & \\
    & = &  \left[ d \left( \tmop{Tr}_{_{\mathbbm{R}}} \nabla_{g_t}  \dot{J}_t
    \;\, - \;\, d\, f_t \cdot \dot{J}_t\right)\right] (\eta, \mu)\\
    &  & \\
    & + & 2 \,\Re e \left[ \xi^{- 1}_t \cdot \sum_{l = 1}^n \;\xi^{1, 0}_{1, t}
    \wedge \ldots \wedge \left( \dot{J}_t \mathcal{R}_{g_t} (\eta, \mu) \,\xi_l
    \right)_{J_t}^{1, 0} \wedge \ldots \wedge \xi^{1, 0}_{n, t}  \right] \;.
  \end{eqnarray*}
  For notation simplicity we define 
$$
\zeta_k \;\;\assign\;\; \xi^{1, 0}_{k, t_0} (x_0)
  \;\;=\;\; \frac{\partial}{\partial z_k} _{\mid_{x_0}}\;.
$$
With this notation hold the
  local expression
  \begin{eqnarray*}
    \dot{J}_{t_0} (x_0) \;\; = \;\; C_{k, \bar{l}}  \;\bar{\zeta}_l^{\ast} \otimes
    \zeta_k \;\,+\;\, \overline{C}_{k, \bar{l}} \;\zeta^{\ast}_l \otimes \bar{\zeta}_k\;.
  \end{eqnarray*}
A change of $J_{t_0} (x_0)$-complex basis $\left( \xi_k (x_0)
  \right)_k$ which is also $\omega_{t_0} (x_0)$-orthonormal will change the
  symmetric complex matrix $C = (C_{k, \bar{l}})$ in to a matrix of type $D
  \assign U^T C U$, with $U \in U (n)$. 

A standard linear algebra result
  (see corollary 4.4.4, pp 204-205 in Ho-Jo) shows that under this action of
  the unitary group the matrix $D$ can be reduced to a real and diagonal one.
  Therefore we can choose the $\omega_{t_0} (x_0)$-orthonormal 
$J_{t_0} (x_0)$-complex basis $\left( \xi_k (x_0) \right)_k$ such that
  \begin{eqnarray*}
    \dot{J}_{t_0} (x_0) \;\; = \;\; C_l\;  \bar{\zeta}_l^{\ast} \otimes \zeta_l \;\,+\;\, C_l\;
    \zeta^{\ast}_l \otimes \bar{\zeta}_l\;,
  \end{eqnarray*}
  with $C_l \in \mathbbm{R}$. We write $\eta \;=\; \eta_k\, \zeta_k \;+\; \bar{\eta}_k\, 
  \bar{\zeta}_k$ and $\mu \;=\; \mu_k\, \zeta_k\; +\; \bar{\mu}_k\,  \bar{\zeta}_k$ at the
  point $x_0$. Moreover let $\varphi$ be a real valued smooth function in a
  neighborhood of $x_0$ such that 
$$
\omega_{t_0} \;\;=\;\; \frac{i}{2}\,
  \partial_{_{J_0}} \bar{\partial}_{_{J_0}} \varphi\;.
$$
Then at the space
  time point $(x_0, t_0)$ hold the local expression
  \begin{eqnarray*}
    \mathcal{R}_{g_t} (\eta, \mu) \;\; = \;\; -\;\, \varphi_{j, \bar{k}, l, \bar{h}}\,
    \big(\eta_j  \,\bar{\mu}_k \;\,-\;\, \mu_j  \,\bar{\eta}_k\big) \;\zeta^{\ast}_l \otimes \zeta_h
    \;\,+\;\, \tmop{Conjuguate}\;.
  \end{eqnarray*}
We infer the identity
  \begin{eqnarray*}
&& 2\, \Re e \left[ \xi^{- 1}_t \cdot \sum_{l = 1}^n \;\xi^{1, 0}_{1, t} \wedge
    \ldots \wedge \left( \dot{J}_t \mathcal{R}_{g_t} (\eta, \mu) \xi_l
    \right)_{J_t}^{1, 0} \wedge \ldots \wedge \xi^{1, 0}_{n, t} \right]  
\\
\\
& = &
    2 \,\Re e \Big[ C_l \,\varphi_{k, \bar{j}, l, \bar{l}} \,\big(\eta_k  \,\bar{\mu}_j \;\,-\;\,
    \mu_k  \,\bar{\eta}_j\big) \Big]\\
    &  & \\
    & = & 0\;,
  \end{eqnarray*}
  at the space time point $(x_0, t_0)$. We deduce the variation identity
\begin{equation}
    \label{dec-vr-O-Rc} 2 \, \frac{d}{dt} \tmop{Ric}_{_{J_t}}
    (\Omega) = d \left( \tmop{Tr}_{_{\mathbbm{R}}} \nabla_{g_t}  \dot{J}_t
    \;\, - \;\, \dot{J}^{\ast}_t d \,f_t 
    \right),
  \end{equation}
where $\dot{J}^{\ast}_t d \,f_t \;\assign\; d\, f_t \cdot \dot{J}_t$.
Finally using the symmetry identities $\dot{J}_t\;=\;(\dot{J}_t)^T_{g_t}$ 
and $\nabla_{g_t, \xi} \, \dot{J}_t\;=\;(\nabla_{g_t, \xi} \, \dot{J}_t)^T_{g_t}$
we obtain
\begin{eqnarray*}
2 \, \frac{d}{dt} \tmop{Ric}_{_{J_t}}(\Omega)
\;\; = \;\;
d\left[\tmop{div}_{g_t} (g_t \dot{J}_t) \;\,-\;\, \nabla_{g_t}\,f_t\;\neg\;(g_t \dot{J}_t)\right] \;,
\end{eqnarray*}
which shows the required conclusion.
\end{proof}


For any $h \in C^{\infty} (X, (T^{\ast}_X)^{\otimes 2})$ we define
respectively its $J$-invariant part and its $J$-anti-invariant part as
\begin{eqnarray*}
  h'_{_J} & = & \frac{1}{2}  \left( h \;\,+\;\, J^{\ast} h\, J \right)\;,\\
  &  & \\
  h''_{_J} & = & \frac{1}{2}  \left( h \;\,-\;\, J^{\ast} h\, J \right)\;,
\end{eqnarray*}
and we observe the trivial identities 
$h'_{_J} \;=\; g \left( h^{\ast}_g
\right)_{_J}^{1, 0}$, 
$h''_{_J} \;=\; g \left( h^{\ast}_g \right)_{_J}^{0,
1}$, where as usual $h^{\ast}_g \;\assign\; g^{- 1} h$.

With this notations hold the following lemma.

\begin{corollary}
  \label{rm-vr-Rc-fm} Under the the assumptions of lemma \ref{Lm-var-O-Rc-fm}
  hold the identity
  \begin{eqnarray*}
    2 \,\frac{d}{d t}  \left( \tmop{Ric}_{_{J_t}} (\Omega)  J_t \right)  &
    = & - \;\,\tmop{div}^{^{_{_{\Omega}}}}_{g_t} \mathcal{D}_{g_t}  \dot{g}''_t \\
    &  & \\
    & - & g_t \left( J_t \nabla_{g_t} f_t \;\neg\; \nabla_{g_t}  \dot{J}_t \;\,+\;\,
    \dot{J}_t \nabla^2_{g_t} f_t \,J_t \;\,-\;\, J_t \nabla^2_{g_t} f_t  \,\dot{J}_t
    \right) \;.
  \end{eqnarray*}
  where $ \dot{g}''_t \assign ( \dot{g}_t)_{_{J_t}}''$ and $f_t \assign \log \frac{d V_{g_t}}{\Omega}$.
\end{corollary}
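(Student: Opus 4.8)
The plan is to differentiate the symmetric form $\tmop{Ric}_{_{J_t}}(\Omega)\,J_t$ by the Leibniz rule, to insert the variation formula (\ref{vr-O-Rc-fm}) of Lemma \ref{Lm-var-O-Rc-fm}, and thereby to reduce the statement to a pointwise identity comparing the two second order operators $d\circ\tmop{div}^{^{_{_{\Omega}}}}_{g}$ and $\tmop{div}^{^{_{_{\Omega}}}}_{g}\circ\mathcal{D}_{g}$ — the latter being the operator of (\ref{var-Om-Ric}) — on $J$-anti-invariant symmetric $2$-tensors. First I would record the algebraic dictionary produced by the K\"ahler constraint: differentiating $g_t=J_t^{\ast}g_tJ_t$ and using $\dot{J}_t=(\dot{J}_t)^T_{g_t}$ together with $(J_t)^T_{g_t}=-J_t$ and $\dot{J}_tJ_t=-J_t\dot{J}_t$, one checks that $g_t\dot{J}_t$ is $g_t$-symmetric and $J_t$-anti-invariant and that $\dot{g}''_t(\xi,\eta)=(g_t\dot{J}_t)(J_t\xi,\eta)$ for all $\xi,\eta$, so $\dot{g}''_t$ is a fixed $J_t$-twist of $g_t\dot{J}_t$. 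Since the Leibniz rule and (\ref{vr-O-Rc-fm}) give $2\,\frac{d}{dt}\big(\tmop{Ric}_{_{J_t}}(\Omega)\,J_t\big)=\big[d\,\tmop{div}^{^{_{_{\Omega}}}}_{g_t}(g_t\dot{J}_t)\big]\,J_t+2\,\tmop{Ric}_{_{J_t}}(\Omega)\,\dot{J}_t$, the corollary is equivalent to the tensorial identity
\begin{eqnarray*}
\big[d\,\tmop{div}^{^{_{_{\Omega}}}}_{g_t}(g_t\dot{J}_t)\big]\,J_t\;+\;2\,\tmop{Ric}_{_{J_t}}(\Omega)\,\dot{J}_t & = & -\,\tmop{div}^{^{_{_{\Omega}}}}_{g_t}\mathcal{D}_{g_t}\dot{g}''_t \\
& & \\
& - & g_t\big(J_t\nabla_{g_t}f_t\;\neg\;\nabla_{g_t}\dot{J}_t\;+\;\dot{J}_t\nabla^2_{g_t}f_t\,J_t\;-\;J_t\nabla^2_{g_t}f_t\,\dot{J}_t\big)\;.
\end{eqnarray*}

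To prove it I would fix a space-time point $(x_0,t_0)$ and work in $g_{t_0}$-geodesic, $J_{t_0}$-holomorphic coordinates centered at $x_0$, exactly as in the proof of Lemma \ref{Lm-var-O-Rc-fm}; all terms being tensorial, it suffices to match the two sides at $(x_0,t_0)$. Expanding the $\Omega$-divergence via $\tmop{div}^{^{_{_{\Omega}}}}_g\beta=\tmop{div}_g\beta-\nabla_gf\;\neg\;\beta$ and using the variation formula (\ref{var-dbar-vflds}) for $\bar{\partial}_{_{T_{X, J_t}}}$, the left side yields a term of order two in $g_t\dot{J}_t$, in an ordering of covariant derivatives that does not match the $\hat{\nabla}$-pattern generated by $\mathcal{D}_{g_t}$ followed by $\tmop{div}^{^{_{_{\Omega}}}}_{g_t}$, plus lower order terms built from $\nabla_{g_t}f_t$ and $\nabla^2_{g_t}f_t$ (coming from the contraction by $\nabla_{g_t}f_t$ inside the $\Omega$-divergence and from (\ref{var-dbar-vflds})). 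One then reorders the second order part by commuting covariant derivatives; the commutator is the curvature $\mathcal{R}_{g_t}$, and the special symmetry (\ref{Tg-cx-str}) of $\nabla_{g_t, J_t}^{0, 1}\dot{J}_t$ (lemma 7 in \cite{Pal1}), combined with the K\"ahler identities — notably (\ref{der-Kah-cnd}) — forces the same vanishing of the ``wrong symmetry'' curvature contraction as in the proof of Lemma \ref{Lm-var-O-Rc-fm} (the vanishing of $\Re e(C_l\,\varphi_{k,\bar{j},l,\bar{l}}(\eta_k\,\bar{\mu}_j-\mu_k\,\bar{\eta}_j))$). What survives of the curvature is its trace over the anticanonical bundle $K^{-1}_{_{X, J_t}}$, i.e. essentially $\tmop{Ric}_{_{J_t}}(\Omega)$; combined with the Leibniz term $2\,\tmop{Ric}_{_{J_t}}(\Omega)\,\dot{J}_t$ and with the lower order terms above, a short rearrangement collapses everything to $-\,g_t\big(J_t\nabla_{g_t}f_t\;\neg\;\nabla_{g_t}\dot{J}_t+\dot{J}_t\nabla^2_{g_t}f_t\,J_t-J_t\nabla^2_{g_t}f_t\,\dot{J}_t\big)$.

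I expect the main obstacle to be this second order matching: proving that the principal part of $\big[d\,\tmop{div}^{^{_{_{\Omega}}}}_{g_t}(g_t\dot{J}_t)\big]\,J_t$ is exactly $-\,\tmop{div}^{^{_{_{\Omega}}}}_{g_t}\mathcal{D}_{g_t}\dot{g}''_t$. One must use (\ref{der-Kah-cnd}) to re-express the mismatched iterated covariant derivatives of $\dot{g}''_t$ (equivalently of $\nabla_{g_t}\dot{J}_t$) in the $\hat{\nabla}$-form of $\mathcal{D}_{g_t}$, and then carry out a delicate bookkeeping of the curvature terms along the $(1,0)$/$(0,1)$ decomposition so that they combine with $2\,\tmop{Ric}_{_{J_t}}(\Omega)\,\dot{J}_t$ to leave precisely the displayed $f_t$-terms. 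By contrast, generating those first and zeroth order $f_t$-terms is a direct and essentially mechanical consequence of (\ref{var-dbar-vflds}) and of the definition of the $\Omega$-divergence.
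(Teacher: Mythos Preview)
Your reduction via Leibniz and Lemma \ref{Lm-var-O-Rc-fm} is correct and is exactly the paper's starting point, and the paper also works in the same geodesic $J$-holomorphic coordinates with the diagonal-real form of $C_{k,\bar{l}}$. But three details in your sketch are off and would prevent the computation from closing as you describe it.

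First, the identities (\ref{der-Kah-cnd}) and (\ref{var-dbar-vflds}) are not used here: they belong to the proof of Lemma \ref{Lm-var-O-Rc-fm} itself. The key symmetry driving the second-order matching is instead (\ref{sc-ord-sm-cx}), the coordinate form of $\nabla^2_{\zeta_l,\bar{\zeta}_l}\dot{J}_t=(\nabla^2_{\zeta_l,\bar{\zeta}_l}\dot{J}_t)^T_{g_t}$; together with (\ref{loc-dbar-vrJ}) and the analogous symmetry $\partial^2_{r,p}C_{k,\bar{l}}=\partial^2_{r,p}C_{l,\bar{k}}$, this is what makes the local expressions for $\tmop{div}_{g_t}\mathcal{D}_{g_t}\dot{g}''_t$ and $(d\tmop{Tr}_{\mathbb{R}}\nabla_{g_t}\dot{J}_t)(J_t\cdot,\cdot)$ line up. Second, the curvature term that remains after this matching is $2\tmop{Ric}_{J_t}(\omega_t)\dot{J}_t$, not $2\tmop{Ric}_{J_t}(\Omega)\dot{J}_t$: this is the content of the paper's identity (\ref{dec-O-RcI}). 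The Leibniz term $2\tmop{Ric}_{J_t}(\Omega)\dot{J}_t$ cancels against it only after the $f_t$-part of the $\Omega$-divergence produces an additional $-2(i\,\partial_{J_t}\bar{\partial}_{J_t}f_t)\dot{J}_t$ via identity (\ref{dec-O-Rc2}), since $\tmop{Ric}_{J_t}(\Omega)=\tmop{Ric}_{J_t}(\omega_t)+i\,\partial_{J_t}\bar{\partial}_{J_t}f_t$; your account collapses this three-way cancellation into one step, which would not balance. In fact the paper organizes the whole proof by separating (\ref{dec-O-RcI}) (purely metric, a long coordinate computation of both sides) from (\ref{dec-O-Rc2}) (purely $f_t$-terms, done coordinate-free using (\ref{Tg-cx-str}), (\ref{cx-sm-A1}), (\ref{cx-sm-A2}) and (\ref{cx-dec-Hess})), and then assembles them with the intermediate form (\ref{dec-vr-O-Rc}) rather than with (\ref{vr-O-Rc-fm}) directly.
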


\begin{proof}
  We consider first the elementary decomposition
  \begin{eqnarray*}
    \tmop{div}^{^{_{_{\Omega}}}}_{g_t} \mathcal{D}_{g_t}  \dot{g}''_t  \;\; = \;\;
    \tmop{div}_{g_t} \mathcal{D}_{g_t}  \dot{g}''_t \;\, -\;\,
     \nabla_{g_t} f_t \;\neg\; \mathcal{D}_{g_t}  \dot{g}''_t\;,
  \end{eqnarray*}
  and we will show the identity
  \begin{equation}
    \label{dec-O-RcI} \tmop{div}_{g_t}  \mathcal{D}_{g_t}  \dot{g}''_t \;\;=\;\; -\;\,
    \left( d \tmop{Tr}_{_{\mathbbm{R}}} \nabla_{g_t}  \dot{J}_t \right) 
    J_t \;\,-\;\, 2\, \tmop{Ric}_{_{J_t}} (\omega_t)\, \dot{J}_t \;.
  \end{equation}
  In a second time we will show the identity
\begin{eqnarray}
\label{dec-O-Rc2}    
- \;\nabla_{g_t} f_t \;\neg\; \mathcal{D}_{g_t}  \dot{g}_t & = & \left( d\,
    \dot{J}^{\ast}_t d \,f_t \right) J_t \;-\; 2\, \left( i\, \partial_{_{J_t}}
    \bar{\partial}_{_{J_t}} f_t \right) \dot{J}_t\nonumber
\\\nonumber
    &  & \\
    & - & g_t \left( J_t \nabla_{g_t} f_t \;\neg\; \nabla_{g_t}  \dot{J}_t \;+\;
    \dot{J}_t \nabla^2_{g_t} f_t \,J_t \;-\; J_t \nabla^2_{g_t} f_t\,  \dot{J}_t
    \right).\;\qquad
  \end{eqnarray}
Then the
  conclusion will follow from the variation formula (\ref{dec-vr-O-Rc}) 
obtained in the proof of lemma \ref{Lm-var-O-Rc-fm}. We divide now the proof
  in a few steps.

  {\tmstrong{(A)}} In this step we explicit in local coordinates the r.h.s of
  the identity (\ref{dec-vr-O-Rc}). We will keep the notations for the
  geodesic coordinates used at the end the proof of lemma \ref{Lm-var-O-Rc-fm} and we
  will denote by "Conj" the conjugate of all therms before this symbol. We
  consider now the local expression
  \begin{eqnarray*}
    \nabla_{g_t}  \dot{J}_t & = & d\,C_{k, \bar{l}} \,\otimes \bar{\zeta}_l^{\ast}
    \otimes \zeta_k\\
    &  & \\
    & + & C_{k, \bar{l}}  \left( \nabla_{g_t}  \bar{\zeta}_l^{\ast} \otimes
    \zeta_k \;\,+ \;\, \bar{\zeta}_l^{\ast} \otimes
    \nabla_{g_t} \zeta_k \right) \;\, + \;\,
    \text{\tmop{Conj}}\\
    &  & \\
    & = &  \left( \partial_p \, C_{k, \bar{l}} \;\, +\;\,
     A^p_{k, r} C_{r, \bar{l}} \right) \zeta_p^{\ast} \otimes
    \bar{\zeta}_l^{\ast} \otimes \zeta_k\\
    &  & \\
    & + & \left( \partial_{\bar{p}} \, C_{k, \bar{l}}
    \;\, -\;\, \overline{A^p_{r, l}} \;C_{k, \bar{r}} \right) 
    \bar{\zeta}_p^{\ast} \otimes \bar{\zeta}_l^{\ast} \otimes \zeta_k
    \;\,+ \;\, \text{\tmop{Conj}} \;,
  \end{eqnarray*}
  obtained by using the expressions (\ref{cov-der-Tg}) and (\ref{cov-der-cTg})
  of the complexified Levi-Civita connection in the appendix. Thus
$$
 \tmop{Tr}_{_{\mathbbm{R}}} \nabla_{g_t}  \dot{J}_t 
     \;\;=\;\;
 \big( \partial_r \,C_{r, \bar{l}} \;\, +\;\,
A^p_{p, r} \,C_{r, \bar{l}} \big)  \;\bar{\zeta}_l^{\ast}
     \;\,+ \;\, \text{\tmop{Conj}} \;, 
$$
and
  \begin{eqnarray*}
d \tmop{Tr}_{_{\mathbbm{R}}} \nabla_{g_t}  \dot{J}_t & = & d\, \big( \partial_r
    \, C_{r, \bar{l}} \;\, + \;\, 
A^p_{p,
    r}\, C_{r, \bar{l}}\big) \wedge \bar{\zeta}_l^{\ast} \;\, +\;\,
    \text{\tmop{Conj}}\\
    &  & \\
    & = & \hspace{0.25em} \partial_{k, r}^2 \,C_{r, \bar{l}}\; \zeta_k^{\ast}
    \wedge \bar{\zeta}_l^{\ast} \;\, + \;\,
    \partial_{\bar{k}, r}^2 \,C_{r, \bar{l}}  \;\bar{\zeta}_k^{\ast} \wedge
    \bar{\zeta}_l^{\ast}\\
    &  & \\
    & + &  d \left( A^p_{p, r} C_{r, \bar{l}} \right) \wedge
    \bar{\zeta}_l^{\ast} \;\,+ \;\,
    \text{\tmop{Conj}}\\
    &  & \\
    & = & \partial_{k, r}^2 \,C_{r, \bar{l}} \; \zeta_k^{\ast}
    \wedge \bar{\zeta}_l^{\ast} \;\,+\;\, d \left[ (\partial_p 
    \, \omega_{r, \bar{s}}) \, \omega^{s, \bar{p}}\,
    C_{r, \bar{l}} \right] \wedge \bar{\zeta}_l^{\ast} \;\, +\;\,
    \text{\tmop{Conj}} \;,
  \end{eqnarray*}
  since the therm
  \begin{eqnarray*}
    \partial_{\bar{k}, r}^2  \, C_{r, \bar{l}} \;\;
    = \;\; \partial_{r, \bar{k}}^2  \, C_{r, \bar{l}} \;\;=\;\; \partial_{r,
    \bar{l}}^2  \; C_{r, \bar{k}}\;,
  \end{eqnarray*}
  is symmetric in the indices $k$ and $l$. The last equality follows from the
  local identity
  \begin{equation}
    \label{loc-dbar-vrJ} \partial_{\bar{k}}  \; C_{r, \bar{l}} \;\;=\;\;
    \partial_{\bar{l}}  \; C_{r, \bar{k}}\;,
  \end{equation}
  which is equivalent to the identity
$$
\bar{\partial}_{_{T_{X, J_t}}} \dot{J}_t \;\;\equiv\;\; 0\;.
$$ 
(See lemma 7 in \cite{Pal1}.) Thus at the point $p_0$ where the geodesic 
coordinates are centered
hold the expression
\begin{eqnarray*}
d \tmop{Tr}_{_{\mathbbm{R}}} \nabla_{g_t}  \dot{J}_t & = & \partial_{k,
    r}^2 \,C_{r, \bar{l}} \; \zeta_k^{\ast} \wedge
    \bar{\zeta}_l^{\ast} 
\\
\\
& +&   \left(
    \partial_{p, \bar{k}}^2 \;\omega_{r, \bar{p}} \;C_{r, \bar{l}} \right)
     \bar{\zeta}_k^{\ast} \wedge \bar{\zeta}_l^{\ast}
    \;\, + \;\, \text{\tmop{Conj}}\;,
\end{eqnarray*}
  i.e.
\begin{eqnarray}  
    \label{dTrDJ} d \tmop{Tr}_{_{\mathbbm{R}}} \nabla_{g_t}  \dot{J}_t 
&=&
    \left( \partial_{k, r}^2 \,C_{r, \bar{l}} \;\, -\;\,
 \partial_{\bar{l}, \bar{r}}^2  \;\overline{C_{r, \bar{k}}}
    \right) \zeta_k^{\ast} \wedge \bar{\zeta}_l^{\ast} \nonumber
\\\nonumber
\\
&-&
 R_{k, \bar{l}} \;C_{l,
    \bar{l}} \;\zeta_k^{\ast} \wedge \zeta_l^{\ast} \;\,-\;\, R_{l, \bar{k}}\; C_{l,
    \bar{l}}  \;\bar{\zeta}_k^{\ast} \wedge \bar{\zeta}_l^{\ast} \;.
\end{eqnarray}
  {\tmstrong{(B)}} In this part we prove the identity (\ref{dec-O-RcI}). We
  keep the previous notations for the geodesic coordinates and we consider
  arbitrary vector fields $\xi, \eta$ with constant coefficients with respect
  to this coordinates. Using the local expression (\ref{dTrDJ}) we infer the
  identity
  \begin{eqnarray*}
    d \tmop{Tr}_{_{\mathbbm{R}}} \nabla_{g_t}  \dot{J}_t \,(J_t \xi, \eta) & = &
    i \left( \partial_{k, r}^2 \;C_{r, \bar{l}} 
    \;\,- \;\, \partial_{\bar{l}, \bar{r}}^2  \;\overline{C_{r, \bar{k}}}
    \right) \left( \xi_k\,  \bar{\eta}_l \;\,+\;\, \bar{\xi}_l\, \eta_k \right)\\
    &  & \\
    & + & i \,R_{k, \bar{l}}\; C_{l, \bar{l}}\;  \big( \xi_l \,\eta_k \;\,-\;\, \xi_k \,\eta_l
    \big) 
\\
\\
&+& i\, R_{l, \bar{k}}\; C_{l, \bar{l}}\;  \left( \bar{\xi}_k\, 
    \bar{\eta}_l \;\,-\;\, \bar{\xi}_l\,  \bar{\eta}_k \right)\;,
  \end{eqnarray*}
  at the point $p_0$. On the
  other hand the general identity
$$
 2\, ( \dot{g}_t^{\ast})_{J_t}^{0, 1} \;\;=\;\; -\;\, J_t  \dot{J}_t \;\,-\;\, (J_t 
     \dot{J}_t)_{g_t}^T\;, 
$$
  (see lemma 3, identity 15 in \cite{Pal1}) combined with the symmetry
  assumption $\dot{J}_t \;=\; ( \dot{J}_t)_{g_t}^T$ implies
  \begin{eqnarray*}
    ( \dot{g}''_t)^{\ast} \;\; = \;\; ( \dot{g}^{\ast}_t)_{_{J_t}}^{0, 1} \;\;=\;\; -\;\, J_t 
    \dot{J}_t \;.
  \end{eqnarray*}
  Using this last identity we expand the therm
  \begin{eqnarray*}
    \tmop{div}_{g_t} \mathcal{D}_{g_t}  \dot{g}''_t (\xi, \eta) \;\; = \;\; 2\,
    \nabla_{g_t} \mathcal{D}_{g_t}  \dot{g}''_t  \left( \zeta_l,
    \bar{\zeta}_l, \xi, \eta \right) \;\,+\;\, 2\, \nabla_{g_t} \mathcal{D}_{g_t} 
    \dot{g}''_t  \left( \bar{\zeta}_l, \zeta_l, \xi, \eta \right) \;.
  \end{eqnarray*}
  We obtain the equalities
  \begin{eqnarray*}
    2\, \nabla_{g_t} \mathcal{D}_{g_t}  \dot{g}''_t  \left( \zeta_l,
    \bar{\zeta}_l, \xi, \eta \right) & = & -\;\, 2\, \nabla^2_{g_t}  \dot{g}''_t 
    \left( \zeta_l, \bar{\zeta}_l, \xi, \eta \right) \;\,+\;\, 2\, \nabla^2_{g_t} 
    \dot{g}''_t  \left( \zeta_l, \xi, \bar{\zeta}_l, \eta \right) \\
    &  & \\
    & + & 2 \,\nabla^2_{g_t}  \dot{g}''_t  \left( \zeta_l, \eta, \bar{\zeta}_l,
    \xi \right) \\
    &  & \\
    & = & - \;\,2\, g \left( J_t \nabla^2_{g_t}  \dot{J}_t \left( \zeta_l, \xi,
    \bar{\zeta}_l \right), \eta \right) 
\\
\\
&-& 2\, g \left( J_t \nabla^2_{g_t} 
    \dot{J}_t \left( \zeta_l, \eta, \bar{\zeta}_l \right), \xi \right)\\
    &  & \\
    & + & 2 \,g \left( J_t \nabla^2_{g_t}  \dot{J}_t \left( \zeta_l,
    \bar{\zeta}_l, \xi \right), \eta \right) \;.
  \end{eqnarray*}
  We compute now the local expression of the tensor $\nabla_{g_t}^2 
  \dot{J}_t$ . Taking the covariant derivative of the local expression of the
  tensor $\nabla_{g_t}  \dot{J}_t$, obtained in the beginning of step
  {\tmstrong{(A)}}, we infer the expression at the point $p_0$
  \begin{eqnarray*}
    \nabla_{g_t}^2  \dot{J}_t & = & \left( d\, \partial_p \,C_{k, \bar{l}} \;\,+\;\, d\,
    A_{k, h}^p \,C_{h, \bar{l}} \right) \otimes \zeta_p^{\ast} \otimes
    \bar{\zeta}_l^{\ast} \otimes \zeta_k \\
    &  & \\
    & + & \left( d\, \partial_{\bar{p}}\, C_{k, \bar{l}} \;\,-\;\, d\, \overline{A_{h,
    l}^p} \,C_{k, \bar{h}} \right) \otimes \bar{\zeta}_p^{\ast} \otimes
    \bar{\zeta}_l^{\ast} \otimes \zeta_k \;\,+\;\, \tmop{Conj}\\
    &  & \\
    & = & \partial_{r, p}^2 \,C_{k, \bar{l}} \;\zeta_r^{\ast} \otimes
    \zeta_p^{\ast} \otimes \bar{\zeta}_l^{\ast} \otimes \zeta_k \\
    &  & \\
    & + & \left( \partial_{\bar{r}, p}^2\, C_{k, \bar{l}} \;\,+\;\, C_{l, \bar{l}}\;
    \varphi_{p, \bar{r}, l, \bar{k}} \right)  \bar{\zeta}_r^{\ast} \otimes
    \zeta_p^{\ast} \otimes \bar{\zeta}_l^{\ast} \otimes \zeta_k\\
    &  & \\
    & + & \left( \partial_{r, \bar{p}}^2 \,C_{k, \bar{l}} \;\,-\;\, C_{k, \bar{k}}\;
    \varphi_{r, \bar{p}, k, \bar{l}} \right) \zeta_r^{\ast} \otimes
    \bar{\zeta}_p^{\ast} \otimes \bar{\zeta}_l^{\ast} \otimes \zeta_k \\
    &  & \\
    & + & \partial_{\bar{r}, \bar{p}}^2\, C_{k, \bar{l}} \; \bar{\zeta}_r^{\ast}
    \otimes \bar{\zeta}_p^{\ast} \otimes \bar{\zeta}_l^{\ast} \otimes \zeta_k
    \;\,+\;\, \tmop{Conj} \;.
  \end{eqnarray*}
  Using this last expression we expand the therms (i.e the sums, by the
  Einstein convention)
  \begin{eqnarray*}
    J_t \nabla^2_{g_t}  \dot{J}_t \left( \zeta_l, \xi, \bar{\zeta}_l \right) &
    = &  i  \left( \partial_{l, p}^2\, C_{k, \bar{l}} \;\,+\;\, C_{l, \bar{l}}\;
    \varphi_{p, \bar{l}, l, \bar{k}} \right) \xi_p\,\zeta_k
\\
\\
& +& i\left( \partial_{l,
    \bar{p}}^2 \,C_{k, \bar{l}} \;\,+\;\, C_{k, \bar{k}} \;R_{k, \bar{p}} \right) 
    \bar{\xi}_p  \,\zeta_k,\\
    &  & \\
    J_t \nabla^2_{g_t}  \dot{J}_t \left( \zeta_l, \bar{\zeta}_l, \xi \right) &
    = & i \left( \partial_{l, \bar{l}}^2\; C_{k, \bar{p}} \;\,+\;\, C_{k, \bar{k}}\; R_{k,
    \bar{p}} \right)  \bar{\xi}_p \,\zeta_k 
\\
\\
&-& i \left( \partial_{l, \bar{l}}^2 \;
    \overline{C_{k, \bar{p}}} \;\,-\;\, C_{p, \bar{p}} \;R_{k, \bar{p}} \right) \xi_p \,
    \bar{\zeta}_k \;.
  \end{eqnarray*}
  We infer the expression
  \begin{eqnarray*}
    &&2 \nabla_{g_t} \mathcal{D}_{g_t}  \dot{g}''_t  \left( \zeta_l,
    \bar{\zeta}_l, \xi, \eta \right) 
\\
\\
& = & - \;\,i \Big[\left( \partial_{l, p}^2\,
    C_{k, \bar{l}} \;\,+\;\, C_{l, \bar{l}}\; \varphi_{p, \bar{l}, l, \bar{k}} \right)
    \xi_p \;\,+\;\, \left( \partial_{l, \bar{p}}^2\, C_{k, \bar{l}}\;\, +\;\, C_{k, \bar{k}}\;
    R_{k, \bar{p}} \right)  \bar{\xi}_p\Big]  \bar{\eta}_k\\
    &  & \\
    & - & i \Big[\left( \partial_{l, p}^2 \,C_{k, \bar{l}} \;\,+\;\, C_{l, \bar{l}}\;
    \varphi_{p, \bar{l}, l, \bar{k}} \right) \eta_p \;\,+\;\, \left( \partial_{l,
    \bar{p}}^2 \,C_{k, \bar{l}} \;\,+\;\, C_{k, \bar{k}}\; R_{k, \bar{p}} \right) 
    \bar{\eta}_p\Big]  \bar{\xi}_k\\
    &  & \\
    & + & i \left( \partial_{l, \bar{l}}^2\; C_{k, \bar{p}} \;\,+\;\, C_{k, \bar{k}}\;
    R_{k, \bar{p}} \right)  \bar{\xi}_p \, \bar{\eta}_k \;\,-\;\, i \left( \partial_{l,
    \bar{l}}^2  \;\overline{C_{k, \bar{p}}} \;\,-\;\, C_{p, \bar{p}} \;R_{k, \bar{p}}
    \right) \xi_p \,\eta_k\\
    &  & \\
    & = & - \;\,i \left( \partial_{l, p}^2\, C_{k, \bar{l}} \;\,+\;\, C_{l, \bar{l}}\;
    \varphi_{p, \bar{l}, l, \bar{k}} \right) \xi_p \, \bar{\eta}_k\\
    &  & \\
    & - & i \Big[ \left( \partial_{l, p}^2 \,C_{k, \bar{l}} \;\,+\;\, C_{l, \bar{l}}\;
    \varphi_{p, \bar{l}, l, \bar{k}} \right) \eta_p \;\,+\;\, \left( \partial_{l,
    \bar{l}}^2 \;C_{k, \bar{p}} \;\,+\;\, C_{k, \bar{k}} \;R_{k, \bar{p}} \right) 
    \bar{\eta}_p\Big]  \bar{\xi}_k\\
    &  & \\
    & - & i \left( \partial_{l, \bar{l}}^2  \;\overline{C_{k, \bar{p}}} \;\,-\;\, C_{p,
    \bar{p}}\; R_{k, \bar{p}} \right) \xi_p \,\eta_k\;,
  \end{eqnarray*}
  thanks to the identity (\ref{loc-dbar-vrJ}). Moreover simplifying the therms
  \begin{eqnarray*}
    C_{l, \bar{l}} \;\varphi_{p, \bar{l}, l, \bar{k}}\; \xi_p  \,\bar{\eta}_k \;,\qquad
    C_{l, \bar{l}} \;\varphi_{p, \bar{l}, l, \bar{k}} \;\eta_p  \,\bar{\xi}_k\;,
  \end{eqnarray*}
  with their conjugates (We remind that $C_{l, \bar{l}} \;=\; \overline{C_{l,
  \bar{l}}}$ thanks to our special choice of geodesic coordinates.) and rearranging the conjugate therms we obtain
  \begin{eqnarray*}
    \tmop{div}_{g_t} \mathcal{D}_{g_t}  \dot{g}''_t (\xi, \eta) & = & - \;\,i\,
    \partial_{l, p}^2 \,C_{k, \bar{l}}  \left( \xi_p \, \bar{\eta}_k \;\,+\;\, \bar{\xi}_k\,
    \eta_p \right) 
\\
\\
&-& i \left( \partial_{l, \bar{l}}^2\; C_{k, \bar{p}} \;\,+\;\, C_{k,
    \bar{k}}\; R_{k, \bar{p}} \right)  \bar{\xi}_k \, \bar{\eta}_p\\
    &  & \\
    & + & i \left( \partial_{l, \bar{l}}^2\; C_{k, \bar{p}} \;\,-\;\, C_{p, \bar{p}}\;
    R_{p, \bar{k}} \right)  \bar{\xi}_p  \,\bar{\eta}_k \;\,+\;\, \tmop{Conj}\\
    &  & \\
    & = & - \;\,i \,\partial_{l, p}^2 \,C_{k, \bar{l}}  \left( \xi_p \, \bar{\eta}_k \;\,+\;\,
    \bar{\xi}_k \,\eta_p \right) 
\\
\\
&+& i \left( \partial_{l, \bar{l}}^2 \;C_{p,
    \bar{k}} \;\,-\;\, \partial_{l, \bar{l}}^2 \;C_{k, \bar{p}} \right)  \bar{\xi}_k \,
    \bar{\eta}_p \\
    &  & \\
    & - & 2\, i\, C_{k, \bar{k}} \;R_{k, \bar{p}}\;  \bar{\xi}_k \, \bar{\eta}_p \;\,+\;\,
    \tmop{Conj}\\
    &  & \\
    & = & - \;\,i\, \partial_{l, p}^2 \,C_{k, \bar{l}}  \left( \xi_p  \,\bar{\eta}_k \;\,+\;\,
    \bar{\xi}_k \,\eta_p \right) 
\\
\\
&+& i \left( \partial_{l, \bar{l}}^2\; C_{p,
    \bar{k}} \;\,-\;\, \partial_{l, \bar{l}}^2\; C_{k, \bar{p}} \right)  \bar{\xi}_k\, 
    \bar{\eta}_p \;\,+\;\, \tmop{Conj}\\
    &  & \\
    & - & 2 \tmop{Ric}_{_{J_t}} (\omega_t) ( \dot{J}_t \xi, \eta)\;,
  \end{eqnarray*}
  at the point $p_0$. We remind now that for any tensor $\alpha$ and any
  smooth vector fields $v, w$ such that $\nabla_{g_t} w (q) = 0$ at some point
  $q$ hold the well known identity $\nabla^2_{v, w} \,\alpha\,(q) \;=\; \nabla_v \nabla_w\,
  \alpha\,(q)$. By applying this to the identity
  $\dot{J}_t = ( \dot{J}_t)_{g_t}^T$ we infer the formula
  \begin{eqnarray*}
    \nabla_{V, W}^2  \,\dot{J}_t & = & \left( \nabla_{V, W}^2  \,\dot{J}_t
    \right)_{g_t}^T\;,
  \end{eqnarray*}
  for any smooth vector fields $V, W.$ In particular for all indices $r, p,$
  the identity
  \begin{eqnarray*}
    \nabla_{\zeta_r, \zeta_p}^2  \dot{J}_t & = & \left( \nabla_{\zeta_r,
    \zeta_p}^2  \dot{J}_t \right)_{g_t}^T,
  \end{eqnarray*}
  is equivalent to the local symmetry identity
  \begin{eqnarray*}
    \partial_{r, p}^2 \,C_{k, \bar{l}}  & = & \partial_{r, p}^2\, C_{l, \bar{k}}\;,
  \end{eqnarray*}
  for all indices $r, p, k, l$. Moreover for all indices $l,$ the identity
  \begin{eqnarray*}
    \nabla_{\zeta_l, \bar{\zeta}_l}^2  \,\dot{J}_t & = & \left( \nabla_{\zeta_l,
    \bar{\zeta}_l}^2  \,\dot{J}_t \right)_{g_t}^T\;,
  \end{eqnarray*}
  implies the symmetry identity
  \begin{equation}
    \label{sc-ord-sm-cx} \partial_{l, \bar{l}}^2 \;C_{k, \bar{p}} \;\,+\;\, C_{k,
    \bar{k}} \;R_{k, \bar{p}} \;\;=\;\; \partial_{l, \bar{l}}^2 \;C_{p, \bar{k}} \;\,+\;\, C_{p,
    \bar{p}}\; R_{p, \bar{k}}\;,
  \end{equation}
  for all indices $k, p$. We infer the expressions
  \begin{eqnarray*}
    \tmop{div}_{g_t} \mathcal{D}_{g_t}  \dot{g}''_t (\xi, \eta) & = & - \;\,i\,
    \partial_{l, p}^2 \,C_{l, \bar{k}}  \left( \xi_p  \,\bar{\eta}_k \;\,+\;\, \bar{\xi}_k\,
    \eta_p \right) 
\\
\\
&+& i \left( C_{k, \bar{k}} \;R_{k, \bar{p}} \;\,-\;\, C_{p, \bar{p}}\;
    R_{p, \bar{k}} \right)  \bar{\xi}_k  \,\bar{\eta}_p \;\,+\;\, \tmop{Conj}\\
    &  & \\
    & - & 2 \,\tmop{Ric}_{_{J_t}} (\omega_t) ( \dot{J}_t \xi, \eta)\\
    &  & \\
    & = & - \;\,i\, \partial_{p, l}^2\, C_{l, \bar{k}}  \left( \xi_p \, \bar{\eta}_k \;\,+\;\,
    \bar{\xi}_k \,\eta_p \right) 
\\
\\
&+& i\, C_{k, \bar{k}} \;R_{k, \bar{p}}  \left(
    \bar{\xi}_k  \,\bar{\eta}_p \;\,-\;\, \bar{\xi}_p \, \bar{\eta}_k \right) \;\,+\;\,
    \tmop{Conj}\\
    &  & \\
    & - & 2 \,\tmop{Ric}_{_{J_t}} (\omega_t) ( \dot{J}_t \xi, \eta)\\
    &  & \\
    & = & - \;\,d \tmop{Tr}_{_{\mathbbm{R}}} \nabla_{g_t}  \dot{J}_t \,(J_t \xi,
    \eta) \;\,-\;\, 2\, \tmop{Ric}_{_{J_t}} (\omega_t) ( \dot{J}_t \xi, \eta)\;,
  \end{eqnarray*}
  thanks to the expression of the therm $d \tmop{Tr}_{_{\mathbbm{R}}}
  \nabla_{g_t}  \dot{J}_t \,(J_t \xi, \eta)$ obtained in the beginning of step
  {\tmstrong{(B)}}. We deduce the required identity (\ref{dec-O-RcI}).

  {\tmstrong{(C)}} We show now the formula (\ref{dec-O-Rc2}). We decompose the
  therm
  \begin{eqnarray*}
    -\;\,\mathcal{D}_{g_t}  \dot{g}''_t (\nabla_{g_t} f_t, \xi, \eta) & = &
    \nabla_{g_t}  \dot{g}''_t (\nabla_{g_t} f_t, \xi, \eta) \;\,-\;\, \nabla_{g_t} 
    \dot{g}''_t (\xi, \nabla_{g_t} f_t, \eta) 
\\
\\
&-& \nabla_{g_t}  \dot{g}''_t
    (\eta, \nabla_{g_t} f_t, \xi)\\
    &  & \\
    & = & - \;\,g_t \left( J_t \nabla_{g_t}  \dot{J}_t (\nabla_{g_t} f_t, \xi),
    \eta \right) 
\\
\\
&+& g_t \left( J_t \nabla_{g_t}  \dot{J}_t (\xi, \nabla_{g_t}
    f_t), \eta \right) \\
    &  & \\
    & + & g_t \left( J_t \nabla_{g_t}  \dot{J}_t (\eta, \nabla_{g_t} f_t),
    \xi \right) \\
    &  & \\
    & = & -\;\, g_t \left( J_t \nabla^{1, 0}_{g_t, J_t}  \dot{J}_t (\nabla_{g_t}
    f_t, \xi), \eta \right) 
\\
\\
&+& g_t \left( J_t \nabla^{1, 0}_{g_t, J_t} 
    \dot{J}_t (\xi, \nabla_{g_t} f_t), \eta \right)\\
    &  & \\
    & + & g_t \left( J_t \nabla^{1, 0}_{g_t, J_t}  \dot{J}_t (\eta,
    \nabla_{g_t} f_t), \xi \right) 
\\
\\
&+& g_t \left( J_t \nabla^{0, 1}_{g_t, J_t} 
    \dot{J}_t (\nabla_{g_t} f_t, \eta), \xi \right),
  \end{eqnarray*}
  thanks to the symmetry property (\ref{Tg-cx-str}). We observe also that the
  endomorphism $A \assign J_t \dot{J}_t$ satisfies
  the symmetry identities
\begin{eqnarray}
\label{cx-sm-A1} \xi \;\neg\; \nabla_{g, J}^{1, 0}\, A &=& \left( \xi \;\neg\;
    \nabla_{g, J}^{1, 0} \,A \right)_g^T\;,
\\\nonumber
\\
\label{cx-sm-A2} \xi \;\neg\; \nabla_{g, J}^{0, 1} \,A &=& \left( \xi \;\neg\;
    \nabla_{g, J}^{0, 1} \,A \right)_g^T\;,
  \end{eqnarray}
  which are direct consequence of the equality $A = A_g^T$. We infer
  \begin{eqnarray*}
    g_t \left( J_t \nabla^{1, 0}_{g_t, J_t}  \dot{J}_t (\xi, \nabla_{g_t}
    f_t), \eta \right) & = & g_t \left( J_t \nabla^{1, 0}_{g_t, J_t} 
    \dot{J}_t (\xi, \eta), \nabla_{g_t} f_t \right) \;,\\
    &  & \\
    g_t \left( J_t \nabla^{0, 1}_{g_t, J_t}  \dot{J}_t (\nabla_{g_t} f_t,
    \eta), \xi \right) & = & g_t \left( J_t \nabla^{0, 1}_{g_t, J_t} 
    \dot{J}_t (\nabla_{g_t} f_t, \xi), \eta \right) \;.
  \end{eqnarray*}
  Thus
  \begin{eqnarray*}
    -\;\,\mathcal{D}_{g_t}  \dot{g}''_t (\nabla_{g_t} f_t, \xi, \eta) & = & d f_t
    \cdot J_t \left( \nabla^{1, 0}_{g_t, J_t}  \dot{J}_t (\xi, \eta) \;\,+\;\,
    \nabla^{1, 0}_{g_t, J_t}  \dot{J}_t (\eta, \xi) \right)\\
    &  & \\
    & - & g_t \left( \nabla_{g_t}  \dot{J}_t (J_t \nabla_{g_t} f_t, \xi),
    \eta \right) \;.
  \end{eqnarray*}
  We use this last identity in the decomposition of the therm
  \begin{eqnarray*}
    \left( d \,\dot{J}^{\ast}_t d \,f_t \right) (J_t \xi, \eta) & = & \nabla_{g_t}
    d \,f_t \,(J_t \xi, \dot{J}_t \eta) \;\,+\;\, d\, f_t \cdot \nabla_{g_t}  \dot{J}_t \,(J_t
    \xi, \eta)\\
    &  & \\
    & - & \nabla_{g_t} d \,f_t \,(\eta, \dot{J}_t J_t \xi) \;\,-\;\, d\, f_t \cdot
    \nabla_{g_t}  \dot{J}_t \,(\eta, J_t \xi)\\
    &  & \\
    & = & g_t \left( \nabla^2_{g_t} f_t \, J_t \xi, \dot{J}_t \eta \right)
    \;\,+\;\, \nabla_{g_t} d\, f_t (J_t  \dot{J}_t \xi, \eta)\\
    &  & \\
    & + & d \,f_t \cdot J_t \left( \nabla^{1, 0}_{g_t, J_t}  \dot{J}_t \,(\xi,
    \eta) \;\,+\;\, \nabla^{1, 0}_{g_t, J_t}  \dot{J}_t \,(\eta, \xi) \right)\\
    &  & \\
    & = & g_t \left( \dot{J}_t \nabla^2_{g_t} f_t \, J_t \xi, \eta \right)
    \;\,+\;\, \nabla_{g_t} d \,f_t \,(J_t  \dot{J}_t \xi, \eta)\\
    &  & \\
    & - & \mathcal{D}_{g_t}  \dot{g}''_t \,(\nabla_{g_t} f_t, \xi, \eta) \;\,+\;\, g_t
    \left( \nabla_{g_t}  \dot{J}_t \,(J_t \nabla_{g_t} f_t, \xi), \eta \right) \;.
  \end{eqnarray*}
  Combining this with the decomposition formula (\ref{cx-dec-Hess}) we infer
  the required identity (\ref{dec-O-Rc2}). 
\end{proof}

Combining the time derivative of the identity (\ref{cx-dec-Ric}) with the
variation formula (\ref{var-Om-Ric}) and with corollary \ref{rm-vr-Rc-fm} we
infer immediately the following corollary.

\begin{corollary}
  \label{vr-ant-Hess}Under the the assumptions of lemma \ref{Lm-var-O-Rc-fm}
  hold the identity
\begin{eqnarray*}
    2 \,\frac{d}{dt}  \left( g_t \,\bar{\partial}_{_{T_{X, J}}} \nabla_{g_t}
    f_t \right) & = & \tmop{div}^{^{_{_{\Omega}}}}_{g_t} \mathcal{D}_{g_t} 
    \dot{g}'_t\\
    &  & \\
    & - & g_t \left( J_t \nabla_{g_t} f_t \;\neg\; \nabla_{g_t}  \dot{J}_t \;\,+\;\,
    \dot{J}_t \nabla^2_{g_t} f_t \,J_t \;\,-\;\, J_t \nabla^2_{g_t} f_t \, \dot{J}_t
    \right) \;,
  \end{eqnarray*}
where $ \dot{g}'_t \assign ( \dot{g}_t)_{_{J_t}}'$ and $f_t \assign \log \frac{d V_{g_t}}{\Omega}$.
\end{corollary}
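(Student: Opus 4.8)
The plan is to obtain this identity as a purely formal consequence of three facts already established: the pointwise decomposition (\ref{cx-dec-Ric}) of $\tmop{Ric}_{g}(\Omega)$ into its $J$-invariant part $-\tmop{Ric}_{_J}(\Omega)\,J$ and its $J$-anti-invariant part $g\,\bar{\partial}_{_{T_{X,J}}}\nabla_g f$, the first variation formula (\ref{var-Om-Ric}) for the full $\Omega$-Bakry-Emery-Ricci tensor, and the variation formula for the $J$-invariant part provided by Corollary \ref{rm-vr-Rc-fm}. No further geometric input is needed; the work is entirely bookkeeping.

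First I would rewrite the identity (\ref{cx-dec-Ric}) along the path $(g_t,J_t)_t\subset\mathcal{KS}$ in the form $g_t\,\bar{\partial}_{_{T_{X,J}}}\nabla_{g_t}f_t = \tmop{Ric}_{g_t}(\Omega) + \tmop{Ric}_{_{J_t}}(\Omega)\,J_t$ and differentiate it in $t$, obtaining
\[
2\,\frac{d}{dt}\Big(g_t\,\bar{\partial}_{_{T_{X,J}}}\nabla_{g_t}f_t\Big) \;=\; 2\,\frac{d}{dt}\tmop{Ric}_{g_t}(\Omega) \;+\; 2\,\frac{d}{dt}\big(\tmop{Ric}_{_{J_t}}(\Omega)\,J_t\big).
\]
Into the first term on the right I substitute $2\tfrac{d}{dt}\tmop{Ric}_{g_t}(\Omega)=\tmop{div}^{^{_{_{\Omega}}}}_{g_t}\mathcal{D}_{g_t}\dot g_t$ from (\ref{var-Om-Ric}); into the second I substitute Corollary \ref{rm-vr-Rc-fm}, which equals $-\,\tmop{div}^{^{_{_{\Omega}}}}_{g_t}\mathcal{D}_{g_t}\dot g''_t$ minus the endomorphism term $g_t\big(J_t\nabla_{g_t}f_t\neg\nabla_{g_t}\dot J_t + \dot J_t\nabla^2_{g_t}f_t\,J_t - J_t\nabla^2_{g_t}f_t\,\dot J_t\big)$ that already appears verbatim in the statement to be proved.

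The final step is the cancellation. By the very definitions of $h'_{_J}$ and $h''_{_J}$ we have the splitting $\dot g_t=\dot g'_t+\dot g''_t$, and since $\mathcal{D}_{g_t}$ and $\tmop{div}^{^{_{_{\Omega}}}}_{g_t}$ are $\mathbbm{R}$-linear in their tensor argument,
\[
\tmop{div}^{^{_{_{\Omega}}}}_{g_t}\mathcal{D}_{g_t}\dot g_t \;-\; \tmop{div}^{^{_{_{\Omega}}}}_{g_t}\mathcal{D}_{g_t}\dot g''_t \;=\; \tmop{div}^{^{_{_{\Omega}}}}_{g_t}\mathcal{D}_{g_t}\dot g'_t ,
\]
and combining the three displays gives exactly the asserted formula. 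Since all of the analysis has been absorbed into Lemma \ref{Lm-var-O-Rc-fm} and Corollary \ref{rm-vr-Rc-fm}, there is no real obstacle at this stage: the only point that requires care is the sign bookkeeping, in particular that the derivative of the product $\tmop{Ric}_{_{J_t}}(\Omega)\,J_t$ need not be expanded by hand because Corollary \ref{rm-vr-Rc-fm} already packages it, and that the leftover endomorphism term is carried over with the correct overall sign.
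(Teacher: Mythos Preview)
Your argument is correct and is exactly the approach taken in the paper: the paper states only that the corollary follows immediately by combining the time derivative of identity (\ref{cx-dec-Ric}) with formula (\ref{var-Om-Ric}) and Corollary \ref{rm-vr-Rc-fm}, which is precisely the rearrangement and linear cancellation $\dot g_t-\dot g''_t=\dot g'_t$ that you spell out.
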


\section{Weitzenb\"ock type formulas}

We denote by $P_g^{\ast}$ the formal adjoint of an operator $P_g$ depending on
$g$. We observe that the operator
\begin{eqnarray*}
  P^{\ast_{_{\Omega}}}_g \;\; : = \;\; e^f P^{\ast}_g  \left( e^{- f} \bullet
  \right)\;,
\end{eqnarray*}
is the formal adjoint of $P_g$ with respect to the scalar product $\int_X
\left\langle \cdot, \cdot \right\rangle_g \Omega$. With this notation hold the identity
$\nabla_g^{\ast_{_{\Omega}}} \;=\; -\, \tmop{div}^{^{_{_{\Omega}}}}_g$. As in \cite{Pal2} we define the $\Omega$-Laplacian
\begin{eqnarray*}
  \Delta^{^{_{_{\Omega}}}}_g \;\; \assign \;\; \nabla_g^{\ast_{_{\Omega}}} \nabla_g
  \;\;=\;\; \Delta_g \;\,+\;\, \nabla_g f \;\neg\; \nabla_g\;,
\end{eqnarray*}
with $f \assign \log \frac{d V_g}{\Omega}$ and we remind the following
result obtained in \cite{Pal2}.

\begin{lemma}
  \label{Endo-Div}For any $g \in \mathcal{M}$ and $u \in C^{\infty} (X, S^2
  T^{\ast}_X)$ hold the formula
  \begin{eqnarray*}
    \left( \tmop{div}^{^{_{_{\Omega}}}}_g \mathcal{D}_g u \right)_g^{\ast}  &
    = & \frac{1}{2} \,\nabla^{\ast_{_{\Omega}}}_{_{T_X, g}} \nabla_{_{T_X, g}}
    u^{\ast}_g \;\,+\;\, \frac{1}{2}\,  \left( \nabla^{\ast_{_{\Omega}}}_{_{T_X, g}}
    \nabla_{_{T_X, g}} u^{\ast}_g \right)_g^T \;\,-\;\, \Delta^{^{_{_{\Omega}}}}_g
    u^{\ast}_g\;,
  \end{eqnarray*}
  where $\nabla_{_{T_X, g}}$ denotes the covariant exterior derivative acting
  on $T_X$-valued differential forms.
\end{lemma}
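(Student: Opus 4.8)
The plan is to prove the identity pointwise by a direct tensorial computation, carried out for the $g$-symmetric endomorphism $S \assign u^{\ast}_g = g^{-1}u$, which we regard interchangeably as a section of $\tmop{End}_{_{\R}}(T_X)$ or as a $T_X$-valued $1$-form; since $\nabla_g g = 0$, every operator in the statement commutes with raising and lowering indices, so it suffices to prove the corresponding $\tmop{End}(T_X)$-valued equality. I would fix a point $x_0$, work in $g$-geodesic coordinates centred there, and dispose of the $\Omega$-weight by writing $\tmop{div}^{^{_{_{\Omega}}}}_g = \tmop{div}_g - \nabla_g f \;\neg\;\cdot$ with $f \assign \log\frac{dV_g}{\Omega}$, so that the only nontrivial ingredients become commutation of covariant derivatives and the two Bianchi identities of $g$.

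First I would rewrite $\mathcal{D}_g u = \hat{\nabla}_g u - 2\nabla_g u$ in terms of $S$: using the symmetry of $u$ and of each $\nabla_{\xi}S$, one finds $\mathcal{D}_g u(\xi_0,\xi_1,\xi_2) = g\!\left((\nabla_{\xi_1}S)\xi_0 - (\nabla_{\xi_0}S)\xi_1,\,\xi_2\right) + g\!\left((\nabla_{\xi_2}S)\xi_1,\,\xi_0\right)$, so that, after raising the last index, $\mathcal{D}_g u$ decomposes into the covariant exterior derivative $\nabla_{_{T_X,g}}S$ (the part skew in $\xi_0,\xi_1$) and a pure gradient-of-a-component part. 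Applying $\tmop{div}^{^{_{_{\Omega}}}}_g$, i.e. tracing the leading slot against $g$ with the $\Omega$-weight, the first part yields $\nabla^{\ast_{_{\Omega}}}_{_{T_X,g}}\nabla_{_{T_X,g}}S$ plus a Riemann curvature term coming from the single interchange of the derivative index with the remaining free slot, while the second part yields the rough $\Omega$-Laplacian $\Delta^{^{_{_{\Omega}}}}_g S$ together with a term equal to $\nabla_g$ of the $\Omega$-codifferential of $S$.

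For the right-hand side I would invoke the Weitzenb\"ock identity for $\nabla^{\ast_{_{\Omega}}}_{_{T_X,g}}\nabla_{_{T_X,g}}$ acting on $T_X$-valued $1$-forms: it equals $\Delta^{^{_{_{\Omega}}}}_g$ plus the curvature operator of $g$ minus $\nabla_{_{T_X,g}}$ of the $\Omega$-codifferential. Substituting this into $\tfrac12\nabla^{\ast_{_{\Omega}}}_{_{T_X,g}}\nabla_{_{T_X,g}}u^{\ast}_g + \tfrac12(\cdots)^T_g - \Delta^{^{_{_{\Omega}}}}_g u^{\ast}_g$, and using that $S$, hence $\Delta^{^{_{_{\Omega}}}}_g S$, is $g$-symmetric so that the two copies of the rough Laplacian cancel the $-\Delta^{^{_{_{\Omega}}}}_g S$ term, the right-hand side collapses to $\tfrac12$ of the $g$-symmetrization of (curvature operator applied to $S$) minus (gradient of the $\Omega$-codifferential of $S$). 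It then remains to match this with the $g$-symmetrization of the expression from the previous paragraph — which is legitimate since $(\tmop{div}^{^{_{_{\Omega}}}}_g\mathcal{D}_g u)^{\ast}_g$ is automatically $g$-symmetric, $\tmop{div}^{^{_{_{\Omega}}}}_g\mathcal{D}_g u$ being $2\,\frac{d}{dt}\tmop{Ric}_{g}(\Omega)$-valued by (\ref{var-Om-Ric}): the two gradient-of-codifferential contributions coincide, and the single Riemann term, after symmetrizing and one use of the second Bianchi identity together with $S = S^T_g$ and $R(\xi,\eta) = -R(\eta,\xi)$, collapses onto exactly the Weitzenb\"ock curvature operator.

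I expect the main obstacle to be this last matching step: one must verify that the full Riemann tensor produced by the divergence computation genuinely reduces, after symmetrization and Bianchi, to the particular curvature operator hidden in the Weitzenb\"ock formula, while keeping all signs and all first-order $\nabla_g f$ contributions consistent throughout (the weight never interferes with the curvature identities, since $\nabla_g f$ is an honest $1$-form, but it does multiply many intermediate terms). A slightly different, though not essentially shorter, route is to prove the identity weakly, by pairing both sides with an arbitrary smooth symmetric $2$-tensor and integrating over $X$ against $\int_X\langle\cdot,\cdot\rangle_g\,\Omega$, transferring $\tmop{div}^{^{_{_{\Omega}}}}_g$ and the $\Omega$-adjoints by their definitions; this reduces the statement to an identity of scalar integrands, but the same curvature and Bianchi bookkeeping resurfaces at the end.
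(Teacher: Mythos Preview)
The paper does not actually prove this lemma: immediately before the statement it writes ``we remind the following result obtained in \cite{Pal2}'' and gives no argument. So there is no in-paper proof to compare your outline against; what follows is an assessment of the proposal on its own.

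Your strategy --- expand $\mathcal{D}_g u$ in terms of $S=u^{\ast}_g$, take the $\Omega$-divergence, and match against the right-hand side via a Weitzenb\"ock identity for $\nabla^{\ast_{_{\Omega}}}_{_{T_X,g}}\nabla_{_{T_X,g}}$ on $T_X$-valued $1$-forms --- is the right one and does go through. However, the allocation of terms in your sketch is inverted. Contracting the $\xi_0$-slot of the skew part $-\,g\big((\nabla_{_{T_X,g}}S)(\xi_0,\xi_1),\xi_2\big)$ against the divergence direction gives exactly $\tfrac12\,\nabla^{\ast_{_{\Omega}}}_{_{T_X,g}}\nabla_{_{T_X,g}}S$ with no commutation and hence no curvature contribution (the factor $\tfrac12$ coming from the paper's convention $\nabla^{\ast}_{_{T_X,g}}\beta=-p\,\mathrm{Tr}_g\nabla\beta$ with $p=2$). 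It is the remaining piece $g\big((\nabla_{\xi_2}S)\xi_0,\xi_1\big)$ that, after the single commutation $\nabla^2_{e_k,\xi_2}=\nabla^2_{\xi_2,e_k}+\mathcal{R}(e_k,\xi_2)$, produces the gradient-of-$\Omega$-codifferential term together with the Riemann term; no rough Laplacian $\Delta^{\Omega}_g S$ appears on the left-hand side at this stage --- that operator enters only on the right via the Weitzenb\"ock formula $\tfrac12\,\nabla^{\ast_{_{\Omega}}}_{_{T_X,g}}\nabla_{_{T_X,g}}S=\Delta^{\Omega}_g S+(\text{curvature})-\nabla_{_{T_X,g}}\nabla^{\ast_{_{\Omega}}}_{_{T_X,g}}S$ (in the paper's degree conventions).

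Two smaller points. The curvature matching at the end requires only the \emph{first} (algebraic) Bianchi identity together with $S=S^T_g$ and the skew-symmetry of $\mathcal{R}$; the second Bianchi identity concerns $\nabla\mathcal{R}$ and is irrelevant here, since no derivatives of curvature occur on either side. And your appeal to (\ref{var-Om-Ric}) to justify the $g$-symmetry of $(\mathrm{div}^{\Omega}_g\mathcal{D}_g u)^{\ast}_g$ is legitimate (any symmetric $u$ is $\dot g_0$ for some path), but unnecessary: once the bookkeeping above is corrected, the identity holds slot-by-slot before any symmetrization.
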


Using the expressions (\ref{expr-adDel}) and (\ref{expr-adDbar}) of the adjoint of the standard complex operators in the
subsection \ref{cpx-op} of the appendix we can show the following complex
Weitzenb\"ock type formulas.

\begin{lemma}
  \label{Cx-Weitz}Consider $(J, g) \in \mathcal{K}\mathcal{S}$ and let $A \in
  C^{\infty} (X, T^{\ast}_{X, - J} \otimes_{_{\mathbbm{C}}} T_{X, J}), B \in
  C^{\infty} (X, T^{\ast}_{X, J} \otimes_{_{\mathbbm{C}}} T_{X, J})$ be
  endomorphism sections. Then hold the identities
  \begin{eqnarray}
\label{weitz-A} \Delta_g A &=& \partial^{\ast_g}_{_{T_{X, J}}}
    \partial^g_{_{T_{X, J}}} A \;\,-\;\, A \tmop{Ric}_g^{\ast} \;\,-\;\, \tmop{Ric}_g^{\ast}
    A\;,
\\\nonumber
\\
\label{weitz-B} \Delta_g B &= &\bar{\partial}^{\ast_g}_{_{T_{X, J}}} 
    \bar{\partial}_{_{T_{X, J}}} B \;\,-\;\, B \tmop{Ric}_g^{\ast} \;\,+\;\,
    \tmop{Ric}_g^{\ast} B \;.
\end{eqnarray}
\end{lemma}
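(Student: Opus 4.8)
The plan is to derive both Weitzenb\"ock identities (\ref{weitz-A}) and (\ref{weitz-B}) from the standard Bochner--Kodaira machinery for $T_{X,J}$-valued forms, being careful to track the curvature endomorphism that appears because the coefficient bundle is the holomorphic tangent bundle rather than a line bundle. I would work in $J$-holomorphic normal coordinates centered at an arbitrary point, so that in the Levi-Civita $=$ Chern connection all Christoffel symbols vanish at the base point and second derivatives are controlled by the curvature; this reduces the operator identities to finite-dimensional linear-algebra identities on the curvature tensor. The endomorphism $A$ is a section of $T^*_{X,-J}\otimes_{\mathbb C} T_{X,J}$, i.e. a $(0,1)$-form with values in $T_{X,J}$ (a $\bar\partial$-closed-type slot on the left, holomorphic on the right), while $B$ is a section of $T^*_{X,J}\otimes_{\mathbb C} T_{X,J}$, a $(1,0)$-form with values in $T_{X,J}$; so the relevant operators are $\partial_{T_{X,J}}$ on $B$ (raising the $(1,0)$ index) and $\bar\partial_{T_{X,J}}$ on $A$. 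Wait — reading the statement again, it is $\partial^{\ast_g}_{T_{X,J}}\partial^g_{T_{X,J}}A$ for $A$ and $\bar\partial^{\ast_g}_{T_{X,J}}\bar\partial_{T_{X,J}}B$ for $B$, so in fact one applies to $A$ the operator that lowers along the $(1,0)$ directions after first applying $\partial$, and dually for $B$. I would therefore set up the computation so that for each of $A,B$ I have exactly one first-order complex operator and its $L^2$-adjoint composed in the order indicated, and expand $\Delta_g=\tfrac12(\partial+\bar\partial)^{\ast}(\partial+\bar\partial)$-type identities using that the cross terms $\partial^{\ast}\bar\partial$ etc. contribute only curvature, not second derivatives, exactly as in the K\"ahler identities.

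Concretely, the key steps in order are: (i) write $\Delta_g = \nabla^{\ast}\nabla$ acting on $\mathrm{End}_{\mathbb R}(T_X)$-sections, and decompose $\nabla = \partial_{T_{X,J}}\oplus\bar\partial_{T_{X,J}}$ according to the complex bidegree of the form-slot, using that $(J,g)\in\mathcal{KS}$ so the Chern and Levi-Civita connections agree and $\nabla J = 0$ (hence the decomposition is parallel); (ii) invoke the appendix formulas (\ref{expr-adDel}), (\ref{expr-adDbar}) for $\partial^{\ast_g}_{T_{X,J}}$ and $\bar\partial^{\ast_g}_{T_{X,J}}$ to write the composites $\partial^{\ast}\partial$ and $\bar\partial^{\ast}\bar\partial$ as second-order operators plus zeroth-order curvature terms; (iii) commute covariant derivatives to produce the Riemann curvature of $g$ contracted against the form-slot and against the $T_{X,J}$-coefficient, and re-express these contractions in terms of $\mathrm{Ric}_g^{\ast}$ using that on a K\"ahler manifold the relevant Ricci-type contraction of the curvature acting on $(p,0)$ or $(0,1)$ $T_{X,J}$-valued forms collapses to the Ricci endomorphism (this is where the Kähler condition does the real work, since on a general Hermitian manifold one would get the full curvature, not just Ricci); (iv) bookkeep the two curvature contributions — one from the form index, one from the coefficient index — and observe that the relative sign of the two $\mathrm{Ric}_g^{\ast}$ terms is governed by whether the form-slot is anti-holomorphic ($A$, giving $-A\,\mathrm{Ric}_g^{\ast}-\mathrm{Ric}_g^{\ast}A$) or holomorphic ($B$, giving $-B\,\mathrm{Ric}_g^{\ast}+\mathrm{Ric}_g^{\ast}B$), because conjugating the form-slot flips the sign of one of the two curvature contractions.

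I expect the main obstacle to be step (iv): getting the two signs and the placement (left- versus right-multiplication by $\mathrm{Ric}_g^{\ast}$) exactly right. The asymmetry between (\ref{weitz-A}) and (\ref{weitz-B}) is not an accident of convention — it reflects that in (\ref{weitz-A}) both curvature contractions (from the $\overline{(0,1)}$ form index and from the $(1,0)$ coefficient index) have the \emph{same} effective sign relative to $A$, whereas in (\ref{weitz-B}) the form index is now $(1,0)$, matching the coefficient index, so the two contractions enter $B$ on opposite sides and partially cancel in trace but not as operators. The cleanest way I know to pin this down is to compute everything in terms of the Chern curvature $\mathcal{C}_\Omega$-style components $R_{k\bar l}$ used already in the proof of Corollary \ref{rm-vr-Rc-fm} (the $\varphi_{\,\cdot\,,\cdot\,,\cdot\,,\cdot}$ and $R_{k,\bar l}$ notation), apply $\partial_{\bar k}$ / $\partial_k$ to the local expressions for $A$ and $B$, contract, and match against the known local form of $\mathrm{Ric}_g^{\ast} = g^{-1}\mathrm{Ric}(g)$; the index symmetries of the Kähler curvature tensor (pair symmetry $R_{i\bar j k\bar l}=R_{k\bar l i\bar j}$) are precisely what force the two contractions to reduce to $\mathrm{Ric}$. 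Everything else — the identification of $\Delta_g$ with $\nabla^{\ast}\nabla$, the vanishing of mixed second-order terms, the use of the appendix adjoint formulas — is routine once the normal-coordinate setup is in place.
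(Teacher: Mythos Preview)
Your proposal is correct and follows essentially the same route as the paper: a direct computation in $J$-holomorphic $g$-geodesic coordinates centered at an arbitrary point, expanding $\Delta_g = -\tmop{Tr}_g\nabla_g^2$ on the one hand and $\partial^{\ast_g}_{_{T_{X,J}}}\partial^g_{_{T_{X,J}}}$ (resp.\ $\bar\partial^{\ast_g}_{_{T_{X,J}}}\bar\partial_{_{T_{X,J}}}$) via the appendix formulas (\ref{expr-adDel}), (\ref{expr-adDbar}) on the other, then identifying the difference as the two Ricci contractions with the signs you anticipate in step (iv). The paper carries out exactly this local calculation explicitly, without wrapping it in the Bochner--Kodaira language, but the content is identical.
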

\begin{proof}
  We choose $g$-orthonormal and $J$-holomorphic coordinates centered at an
  arbitrary point $p_0$. Let $A \;=\; C_{k, \bar{l}} \; \bar{\zeta}_l^{\ast} \otimes
  \zeta_k \;+\; \tmop{Conj}$, be the local expression of $A$ and we consider the
  local expression
  \begin{eqnarray*}
    \nabla_g A & = & d\,C_{k, \bar{l}} \,\otimes \bar{\zeta}_l^{\ast} \otimes
    \zeta_k\\
    &  & \\
    & + & C_{k, \bar{l}}  \left( \nabla_{g_t}  \bar{\zeta}_l^{\ast} \otimes
    \zeta_k \;\,+ \;\, \bar{\zeta}_l^{\ast} \otimes
    \nabla_{g_t} \zeta_k \right) \;\, + \;\,
    \text{\tmop{Conj}}\\
    &  & \\
    & = &  \left( \partial_p \, C_{k, \bar{l}} \;\,+\;\,
    A^p_{k, r} \;C_{r, \bar{l}} \right) \zeta_p^{\ast} \otimes
    \bar{\zeta}_l^{\ast} \otimes \zeta_k\\
    &  & \\
    & + & \left ( \partial_{\bar{p}} \; C_{k, \bar{l}}
    \;\, -\;\, \overline{A^p_{r, l}}\; C_{k, \bar{r}} \right) 
    \bar{\zeta}_p^{\ast} \otimes \bar{\zeta}_l^{\ast} \otimes \zeta_k
    \;\,+ \;\, \text{\tmop{Conj}} \;,
  \end{eqnarray*}
  obtained by using by the expression (\ref{cov-der-Tg}) of the complexified
  Levi-Civita connection in the appendix. Taking again the covariant
  derivative we infer the expression at the point $p_0$
  \begin{eqnarray*}
    \nabla_g^2 A & = & \left( d \,\partial_p \,C_{k, \bar{l}} \;\,+\;\, d\, A_{k, h}^p \;C_{h,
    \bar{l}} \right) \otimes \zeta_p^{\ast} \otimes \bar{\zeta}_l^{\ast}
    \otimes \zeta_k \\
    &  & \\
    & + & \left( d \,\partial_{\bar{p}} \;C_{k, \bar{l}} \;\,-\;\, d\, \overline{A_{h,
    l}^p} \;C_{k, \bar{h}} \right) \otimes \bar{\zeta}_p^{\ast} \otimes
    \bar{\zeta}_l^{\ast} \otimes \zeta_k \;\,+\;\, \tmop{Conj}\\
    &  & \\
    & = & \partial_{r, p}^2 \,C_{k, \bar{l}} \;\zeta_r^{\ast} \otimes
    \zeta_p^{\ast} \otimes \bar{\zeta}_l^{\ast} \otimes \zeta_k \\
    &  & \\
    & + & \left( \partial_{\bar{r}, p}^2\, C_{k, \bar{l}} \;\,+\;\, C_{h, \bar{l}}\;
    \varphi_{p, \bar{r}, h, \bar{k}} \right)  \bar{\zeta}_r^{\ast} \otimes
    \zeta_p^{\ast} \otimes \bar{\zeta}_l^{\ast} \otimes \zeta_k\\
    &  & \\
    & + & \left( \partial_{r, \bar{p}}^2 \,C_{k, \bar{l}} \;\,-\;\, C_{k, \bar{h}}\;
    \varphi_{r, \bar{p}, h, \bar{l}} \right) \zeta_r^{\ast} \otimes
    \bar{\zeta}_p^{\ast} \otimes \bar{\zeta}_l^{\ast} \otimes \zeta_k \\
    &  & \\
    & + & \partial_{\bar{r}, \bar{p}}^2\; C_{k, \bar{l}}  \;\bar{\zeta}_r^{\ast}
    \otimes \bar{\zeta}_p^{\ast} \otimes \bar{\zeta}_l^{\ast} \otimes \zeta_k
    \;\,+\;\, \tmop{Conj}\; .
  \end{eqnarray*}
  Thus
  \begin{eqnarray*}
    \Delta_g A & = & - \;\,2\, \nabla_{g, \zeta_k} \nabla_{g, \bar{\zeta}_k} A \;\,-\;\, 2\,
    \nabla_{g, \bar{\zeta}_k} \nabla_{g, \zeta_k} A\\
    &  & \\
    & = & - \;\,2\, \left( 2 \,\partial^2_{r, \bar{r}} \;C_{k, \bar{l}} \;\,+\;\, C_{k,
    \bar{h}} \;R_{h, \bar{l}} \;\,-\;\, C_{h, \bar{l}} \;R_{h, \bar{k}}  \right) 
    \bar{\zeta}_l^{\ast} \otimes \zeta_k \;\,+\;\, \tmop{Conj},
  \end{eqnarray*}
  at the point $p_0$. Moreover using the local expression
  \begin{eqnarray*}
    \partial^g_{_{T_{X, J}}} \zeta_k \;\; = \;\, A_{l, k}^p\; \zeta^{\ast}_p \otimes
    \zeta_l \;\,+\;\, \tmop{Conj}\;,
  \end{eqnarray*}
  we obtain
\begin{eqnarray*}
    \partial^g_{_{T_{X, J}}} A \;\; = \;\; \left( \partial_p \,C_{k, \bar{l}} \;\,+\;\, A_{k,
    h}^p \;C_{h, \bar{l}} \right)  \left( \zeta_p^{\ast} \wedge
    \bar{\zeta}_l^{\ast} \right) \otimes \zeta_k \;\,+\;\, \tmop{Conj}\;,
\end{eqnarray*}
  and
  \begin{eqnarray*}
    \nabla^{0, 1}_{g, J} \,\partial^g_{_{T_{X, J}}} A & = & \left(
    \partial^2_{p, \bar{r}}\; C_{k, \bar{l}} \;\,+\;\, \varphi_{p, \bar{r}, h, \bar{k}}\;
    C_{h, \bar{l}} \right)  \bar{\zeta}_r^{\ast} \otimes \left( \zeta_p^{\ast}
    \wedge \bar{\zeta}_l^{\ast} \right) \otimes \zeta_k \;\,+\;\, \tmop{Conj}\;,
  \end{eqnarray*}
  Thus using the expression (\ref{expr-adDel}) we expand the therm
  \begin{eqnarray*}
    \partial^{\ast_g}_{_{T_{X, J}}} \partial^g_{_{T_{X, J}}} A & = & - \;\,2
    \tmop{Tr}_g  \left( \nabla^{0, 1}_{g, J} \,\partial^g_{_{T_{X, J}}} A
    \right) \\
    &  & \\
    & = & - \;\,4\, \nabla^{0, 1}_{g, J} \,\partial^{g}_{_{T_{X, J}}} A \left(
    \bar{\zeta}_r, \zeta_r, \cdot \right)\\
    &  & \\
    & = & - \;\,4 \left( \partial^2_{r, \bar{r}} \;C_{k, \bar{l}} \;\,-\;\, C_{h, \bar{l}}\;
    R_{h, \bar{k}}  \right)  \bar{\zeta}_l^{\ast} \otimes \zeta_k \;\,+\;\,
    \tmop{Conj} \;.
  \end{eqnarray*}
  We infer the required formula for $A$. Let now $B \;=\; B_{k, \bar{l}}\;
  \zeta^{\ast}_k \otimes \zeta_l \;\,+\;\, \tmop{Conj}$, be the local expression of
  $B$. Expanding the identity
  \begin{eqnarray*}
    \nabla_g B & = & d \,B_{k, \bar{l}} \,\otimes \zeta_k^{\ast} \otimes \zeta_l \;\,+\;\,
    B_{k, \bar{l}} \;\nabla_g \,\zeta_k^{\ast} \otimes \zeta_l \;\,+\;\, B_{k, \bar{l}}\;
    \zeta_k^{\ast} \otimes \nabla_g \,\zeta_l \;\,+\;\, \tmop{Conj}\;,
  \end{eqnarray*}
  we infer
  \begin{eqnarray*}
    \nabla_g B & = & \left( \partial_p\, B_{k, \bar{l}} \;\,+\;\, B_{k, \bar{h}} \;A_{l,
    h}^p \;\,-\;\, B_{h, \bar{l}} \;A_{h, k}^p \right) \zeta_p^{\ast} \otimes
    \zeta_k^{\ast} \otimes \zeta_l 
\\
\\
&+&\partial_{\bar{p}} \;B_{k, \bar{l}} \;
    \bar{\zeta}_p^{\ast} \otimes \zeta_k^{\ast} \otimes \zeta_l \;\,+\;\, \tmop{Conj}\;,
  \end{eqnarray*}
  at the point $p_0$. We deduce the local expression
  \begin{eqnarray*}
    \nabla^2_g B & = & \partial_{r, p}^2 \,B_{k, \bar{l}} \;\zeta_r^{\ast} \otimes
    \zeta_p^{\ast} \otimes \zeta_k^{\ast} \otimes \zeta_l \\
    &  & \\
    & + & \Big( \partial^2_{p, \bar{r}} \;B_{k, \bar{l}} \;\,+\;\, B_{k, \bar{h}}\;
    \varphi_{p, \bar{r}, h, \bar{l}} \;\,-\;\, B_{h, \bar{l}} \;\varphi_{p, \bar{r}, k,
    \bar{h}} \Big)  \bar{\zeta}_r^{\ast} \otimes \zeta_p^{\ast} \otimes
    \zeta_k^{\ast} \otimes \zeta_l\\
    &  & \\
    & + & \partial_{r, \bar{p}}^2 \;B_{k, \bar{l}}\; \zeta_r^{\ast} \otimes
    \bar{\zeta}_p^{\ast} \otimes \zeta_k^{\ast} \otimes \zeta_l \;\,+\;\,
    \partial^2_{\bar{r}, \bar{p}} \;B_{k, \bar{l}} \; \bar{\zeta}_r^{\ast} \otimes
    \bar{\zeta}_p^{\ast} \otimes \zeta_k^{\ast} \otimes \zeta_l \;\,+\;\, \tmop{Conj}\;.
  \end{eqnarray*}
  Thus
  \begin{eqnarray*}
    \Delta_g B & = & - \;\,2\, \nabla_{g, \zeta_k} \nabla_{g, \bar{\zeta}_k} B \;\,-\;\, 2\,
    \nabla_{g, \bar{\zeta}_k} \nabla_{g, \zeta_k} B\\
    &  & \\
    & = & - \;\,2 \left( 2\, \partial^2_{r, \bar{r}} \;B_{k, \bar{l}} \;\,-\;\, B_{k,
    \bar{h}} \;R_{h, \bar{l}} \;\,+\;\, B_{h, \bar{l}} \;R_{k, \bar{h}} \right)
    \zeta_k^{\ast} \otimes \zeta_l \;\,+\;\, \tmop{Conj}\;,
  \end{eqnarray*}
  at the point $p_0$. On the other hand deriving the
  local expression
  \begin{eqnarray*}
    \bar{\partial}_{_{T_{X, J}}} B \;\; = \;\; -\;\, \partial_{\bar{p}}\; B_{k,
    \bar{l}}  \left( \zeta_k^{\ast} \wedge \bar{\zeta}_p^{\ast} \right)
    \otimes \zeta_l \;\,+\;\, \tmop{Conj}\;,
  \end{eqnarray*}
  we infer
  \begin{eqnarray*}
    \nabla^{1, 0}_{g, J}  \,\bar{\partial}_{_{T_{X, J}}} B \;\; = \;\; -\;\,
    \partial^2_{r, \bar{p}} \;B_{k, \bar{l}} \;\zeta^{\ast}_r \otimes \left(
    \zeta_k^{\ast} \wedge \bar{\zeta}_p^{\ast} \right) \otimes \zeta_l \;\,+\;\,
    \tmop{Conj}\;,
  \end{eqnarray*}
  and thus using the expression (\ref{expr-adDbar}) we obtain
  \begin{eqnarray*}
    \bar{\partial}^{\ast_g}_{_{T_{X, J}}}  \,\bar{\partial}_{_{T_{X,
    J}}} B & = & - \;\,2\, \tmop{Tr}_g  \left( \nabla^{1, 0}_{g, J} \,
    \bar{\partial}_{_{T_{X, J}}} B \right)\\
    &  & \\
    & = & - \;\,4\, \nabla^{1, 0}_{g, J} \, \bar{\partial}_{_{T_{X, J}}} B
    \left( \zeta_r, \bar{\zeta}_r, \cdot \right)\\
    &  & \\
    & = & - \;\,4\, \partial^2_{r, \bar{r}} \;B_{k, \bar{l}} \;\zeta_k^{\ast} \otimes
    \zeta_l \;\,+\;\, \tmop{Conj}\;,
  \end{eqnarray*}
  at the point $p_0$. The conclusion follows from the local expression of
  $\Delta_g B$.
\end{proof}

We observe that with our conventions our adjoint operators
$$
\nabla^{\ast}_{_{T_{X, g}}}\;,\qquad \partial^{\ast_g}_{_{T_{X, J}}}\;,\qquad
   \bar{\partial}^{\ast_g}_{_{T_{X, J}}}\;, 
$$
differ by the ones usually defined in the literature by a degree
multiplicative factor. This is due to the fact that with our conventions the
metric induced on the space of forms is the restriction of the metric on the
space of tensors (without degree multiplicative factors). We refer to the
appendix in \cite{Pal1} for more details. We observe also that the formal
adjoint of the $\partial^g_{_{T_{X, J}}}$-operator with respect to the
hermitian product $\int_X \left\langle \cdot, \cdot \right\rangle_{g J}
\Omega$, is the operator
\begin{eqnarray*}
  \partial^{\ast_{g, \Omega}}_{_{T_{X, J}}} \;\; : = \;\; e^f\,
  \partial^{\ast_g}_{_{T_{X, J}}} \left( e^{- f} \bullet \right) \;.
\end{eqnarray*}
In a similar way the formal adjoint of the $\bar{\partial}_{_{T_{X,
J}}}$-operator with respect to the hermitian product $\int_X \left\langle
\cdot, \cdot \right\rangle_{g J} \Omega$, is the operator
\begin{eqnarray*}
  \bar{\partial}^{\ast_{g, \Omega}}_{_{T_{X, J}}} \;\; : = \;\; e^f\,
  \bar{\partial}^{\ast_g}_{_{T_{X, J}}} \left( e^{- f} \bullet \right) \;.
\end{eqnarray*}
With this notations hold the following lemma.

\begin{lemma}
Consider $(J, g) \in \mathcal{K}\mathcal{S}$ and let $A \in C^{\infty} (X,
  T^{\ast}_{X, - J} \otimes_{_{\mathbbm{C}}} T_{X, J}), B \in C^{\infty} (X,
  T^{\ast}_{X, J} \otimes_{_{\mathbbm{C}}} T_{X, J})$ be $g$-symmetric
  endomorphism sections. Then hold the symmetry identities
  \begin{eqnarray}
    \label{sm-Addeldel-A} \partial^{\ast_{g, \Omega}}_{_{T_{X, J}}}
    \partial^g_{_{T_{X, J}}} A &=& \left( \partial^{\ast_{g, \Omega}}_{_{T_{X,
    J}}} \partial^g_{_{T_{X, J}}} A \right)_g^T\;,
\\\nonumber
\\
    \label{com-del-AddbarOm} \partial^g_{_{T_{X, J}}}
    \bar{\partial}^{\ast_{g, \Omega}}_{_{T_{X, J}}} A &+& \frac{1}{2} \,
    \bar{\partial}^{\ast_{g, \Omega}}_{_{T_{X, J}}} \partial^g_{_{T_{X,
    J}}} A \;\;=\;\; A\, \bar{\partial}_{_{T_{X, J}}} \nabla_g f\;,
\\\nonumber
\\
  \bar{\partial}^{\ast_{g, \Omega}}_{_{T_{X, J}}} 
  \bar{\partial}_{_{T_{X, J}}} B & = & \left(
  \bar{\partial}^{\ast_{g, \Omega}}_{_{T_{X, J}}} 
  \bar{\partial}_{_{T_{X, J}}} B \right)_g^T\nonumber
\\\nonumber
  &  & \\
  & - & 2 \,\nabla_g f \;\neg\; \left( \nabla^{1, 0}_{g, J} B \;\,-\;\, \nabla^{0, 1}_{g,
  J} B \right)\nonumber
\\\nonumber
  &  & \\
  & + & 2 \left[ \tmop{Ric}^{\ast}_g, B\right]\;, 
  \label{sm-Addbardbar} 
\\\nonumber
\\
\label{com-dbar-AddelOm}  \bar{\partial}_{_{T_{X, J}}}
  \partial^{\ast_{g, \Omega}}_{_{T_{X, J}}} B &+& \frac{1}{2}\, \partial^{\ast_{g,
  \Omega}}_{_{T_{X, J}}} \bar{\partial}_{_{T_{X, J}}} B \;\;= \;\;B\,
  \bar{\partial}_{_{T_{X, J}}} \nabla_g f \;.
\end{eqnarray}
\end{lemma}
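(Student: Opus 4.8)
We prove the four identities by a pointwise computation at an arbitrary point $p_0$, in $g$-orthonormal and $J$-holomorphic coordinates $(z_1,\dots,z_n)$ centred at $p_0$, keeping the notations of the proof of lemma \ref{Cx-Weitz}; write $A = C_{k,\bar l}\,\bar\zeta_l^{\ast}\otimes\zeta_k + \text{Conj}$ and $B = B_{k,\bar l}\,\zeta_k^{\ast}\otimes\zeta_l + \text{Conj}$. The $g$-symmetry hypotheses on $A$ and $B$ translate respectively into the coefficient symmetries $C_{k,\bar l} = C_{l,\bar k}$ and $B_{k,\bar l} = \overline{B_{l,\bar k}}$. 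We will use repeatedly: (i) the local expressions for $\nabla_g A$, $\nabla_g^2 A$, $\partial^g_{_{T_{X,J}}}A$, $\nabla^{0,1}_{g,J}\partial^g_{_{T_{X,J}}}A$ and for their $B$-analogues $\bar\partial_{_{T_{X,J}}}B$, $\nabla^{1,0}_{g,J}\bar\partial_{_{T_{X,J}}}B$, $\nabla_g^2 B$ already obtained in the proof of lemma \ref{Cx-Weitz}, together with the analogous expressions of $\bar\partial^{\ast_g}_{_{T_{X,J}}}A$ and $\partial^{\ast_g}_{_{T_{X,J}}}B$ coming from \eqref{expr-adDel}--\eqref{expr-adDbar}; (ii) the $\Omega$-twisting identity $P^{\ast_{g,\Omega}} = e^f P^{\ast_g}(e^{-f}\bullet) = P^{\ast_g} + (\text{symbol of }P^{\ast_g} \text{ applied to }-df)$, valid for any first order operator $P$, which shows that passing from $P^{\ast_g}$ to $P^{\ast_{g,\Omega}}$ only adds a term of order zero in the argument, given by the principal symbol of $P^{\ast_g}$ contracted with $-\,df$; (iii) the K\"ahler curvature symmetries $R_{k,\bar l} = R_{l,\bar k}$ and $\varphi_{p,\bar r,k,\bar l} = \varphi_{k,\bar r,p,\bar l} = \varphi_{p,\bar l,k,\bar r}$, and the decomposition \eqref{cx-dec-Hess} of $\nabla_g\,df$.

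For the symmetry identities \eqref{sm-Addeldel-A} and \eqref{sm-Addbardbar} the plan is to reduce to the Weitzenb\"ock formulas of lemma \ref{Cx-Weitz} and then to control the $\Omega$-correction. For $A$: by \eqref{weitz-A} one has $\partial^{\ast_g}_{_{T_{X,J}}}\partial^g_{_{T_{X,J}}}A = \Delta_g A + A\,\tmop{Ric}^{\ast}_g + \tmop{Ric}^{\ast}_g A$; the rough Laplacian $\Delta_g$ commutes with the $g$-transposition since $\nabla_g g = 0$, and $(A\,\tmop{Ric}^{\ast}_g)^T_g = \tmop{Ric}^{\ast}_g A$ because $\tmop{Ric}^{\ast}_g$ and $A$ are both $g$-symmetric, so the right hand side above is $g$-symmetric; it then remains only to check that the $\Omega$-correction, which in coordinates is the contraction of $\nabla^{1,0}_{g,J}f$ into $\partial^g_{_{T_{X,J}}}A$, is $g$-symmetric, and this is immediate from $C_{k,\bar l} = C_{l,\bar k}$. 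For $B$: again by \eqref{weitz-B}, $\bar\partial^{\ast_g}_{_{T_{X,J}}}\bar\partial_{_{T_{X,J}}}B = \Delta_g B + B\,\tmop{Ric}^{\ast}_g - \tmop{Ric}^{\ast}_g B$, whose $g$-symmetric part is $\Delta_g B$ and whose $g$-antisymmetric part is the curvature commutator $[\tmop{Ric}^{\ast}_g, B]$; adding the $\Omega$-correction $\bar\partial^{\ast_g}$-symbol of $-df$ applied to $\bar\partial_{_{T_{X,J}}}B$ and computing its $g$-transpose shows, by means of $B_{k,\bar l} = \overline{B_{l,\bar k}}$ and the local expressions of $\nabla^{1,0}_{g,J}B$ and $\nabla^{0,1}_{g,J}B$, that this correction contributes precisely the term $-\,2\,\nabla_g f\;\neg\;(\nabla^{1,0}_{g,J}B - \nabla^{0,1}_{g,J}B)$, and collecting terms yields \eqref{sm-Addbardbar}. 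Both of these steps are essentially mechanical once lemma \ref{Cx-Weitz} is available.

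For the commutation identities \eqref{com-del-AddbarOm} and \eqref{com-dbar-AddelOm} I would compute the two mixed compositions directly in the above coordinates. Consider \eqref{com-del-AddbarOm}: the terms $\partial^g_{_{T_{X,J}}}\bar\partial^{\ast_{g,\Omega}}_{_{T_{X,J}}}A$ and $\bar\partial^{\ast_{g,\Omega}}_{_{T_{X,J}}}\partial^g_{_{T_{X,J}}}A$ are both of second order in $A$; write each of them as a composition of the $g$-adjoint operators plus the corresponding $\Omega$-correction. The key point is that the specific linear combination with weight $\tfrac12$ — forced by the degree factors in our conventions for the adjoints — makes the second order parts cancel, exactly as in the K\"ahler identity $\partial\bar\partial^{\ast} + \bar\partial^{\ast}\partial = 0$ for the trivial line bundle; for the $T_{X,J}$-valued section $A$ a curvature term of $T_{X,J}$ a priori survives, but the $g$-symmetry $C_{k,\bar l} = C_{l,\bar k}$ together with the K\"ahler symmetry of $\varphi_{p,\bar r,k,\bar l}$ forces it to vanish, exactly as the curvature term vanished at the end of the proof of lemma \ref{Lm-var-O-Rc-fm}. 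What is left is the contribution of the $\Omega$-twist, i.e. the terms built out of $\nabla_g\,df$, which by the decomposition \eqref{cx-dec-Hess} reorganise precisely into $A\,\bar\partial_{_{T_{X,J}}}\nabla_g f$. The identity \eqref{com-dbar-AddelOm} is obtained in the same way, interchanging the roles of $\partial$ and $\bar\partial$ and of $A$ and $B$, using now $B_{k,\bar l} = \overline{B_{l,\bar k}}$ and the symmetry relations \eqref{cx-sm-A1}--\eqref{cx-sm-A2}.

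The main obstacle is the third paragraph: carrying out the bookkeeping for the mixed second order compositions $\partial^g\bar\partial^{\ast_{g,\Omega}}$ and $\bar\partial^{\ast_{g,\Omega}}\partial^g$ (and their $\bar\partial$-analogues), in which numerous curvature terms and first order $\nabla_g f$-terms appear, and checking that with the weight $\tfrac12$ the second order parts and the $T_{X,J}$-curvature parts cancel while the surviving first order part collapses exactly to $A\,\bar\partial_{_{T_{X,J}}}\nabla_g f$. By contrast the symmetry identities \eqref{sm-Addeldel-A} and \eqref{sm-Addbardbar} are comparatively routine once lemma \ref{Cx-Weitz} and the $g$-transposition rules are in hand.
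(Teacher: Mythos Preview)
Your treatment of the symmetry identities \eqref{sm-Addeldel-A} and \eqref{sm-Addbardbar} is essentially the paper's: split off the $\Omega$-correction, apply the Weitzenb\"ock formulas of lemma \ref{Cx-Weitz}, and then check transposition of the remaining first-order piece using \eqref{cx-sm-A1} (resp.\ \eqref{Kh-sm-B1}). That part is fine.

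For the commutation identities \eqref{com-del-AddbarOm} and \eqref{com-dbar-AddelOm} your plan works but is more laborious than necessary, and contains a misconception. The paper does \emph{not} compute the second-order parts in coordinates; instead it invokes directly the $T_X$-valued K\"ahler identity in degree $1$,
\[
\partial^g_{_{T_{X,J}}}\bar\partial^{\ast_g}_{_{T_{X,J}}}+\tfrac12\,\bar\partial^{\ast_g}_{_{T_{X,J}}}\partial^g_{_{T_{X,J}}}
= J\bigl[\partial^g_{_{T_{X,J}}},[\omega^{\ast},\partial^g_{_{T_{X,J}}}]\bigr]=0,
\]
which vanishes by the graded Jacobi identity and $\bigl(\partial^g_{_{T_{X,J}}}\bigr)^2=0$ (the Chern curvature of $T_{X,J}$ is of type $(1,1)$ in the K\"ahler case). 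So there is \emph{no} residual $T_{X,J}$-curvature term to be killed, and the $g$-symmetry of $A$ plays no role in \eqref{com-del-AddbarOm}; that identity in fact holds for any $J$-anti-linear endomorphism section. Your coordinate computation would of course discover this, but only after carrying all the curvature bookkeeping you flag as ``the main obstacle''. Once the unweighted identity is granted, the $\Omega$-twists contribute $-\,\nabla^{1,0}_{g,J}A\,\nabla_g f$ from $\tfrac12\,\bar\partial^{\ast_{g,\Omega}}\partial^g A$ and $+\,\nabla^{1,0}_{g,J}A\,\nabla_g f + A\,\bar\partial_{_{T_{X,J}}}\nabla_g f$ from $\partial^g(A\,\nabla_g f)$ via the Leibniz rule, and the first-order pieces cancel directly; no appeal to \eqref{cx-dec-Hess} is needed. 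The argument for \eqref{com-dbar-AddelOm} is the exact conjugate, using $\bar\partial_{_{T_{X,J}}}\partial^{\ast_g}_{_{T_{X,J}}}+\tfrac12\,\partial^{\ast_g}_{_{T_{X,J}}}\bar\partial_{_{T_{X,J}}}=0$.
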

\begin{proof}
Using the definition of the adjoint operator $\partial^{\ast_{g, \Omega}}_{_{T_{X, J}}}$ and the expression (\ref{expr-adDel}) 
we expand the therm
  \begin{eqnarray*}
    \partial^{\ast_{g, \Omega}}_{_{T_{X, J}}} \partial^g_{_{T_{X, J}}} A & = &
    - \;\,2\, e^f \tmop{Tr}_g  \left[ \nabla^{0, 1}_{g, J}  \left( e^{- f}
    \partial^g_{_{T_{X, J}}} A \right) \right] \\
    &  & \\
    & = & \partial^{\ast_g}_{_{T_{X, J}}} \partial^g_{_{T_{X, J}}} A \;\,+\;\, 2
    \tmop{Tr}_g  \left( \bar{\partial}_{_J} f \otimes_{_J}
    \partial^g_{_{T_{X, J}}} A \right)\\
    &  & \\
    & = & \partial^{\ast_g}_{_{T_{X, J}}} \partial^g_{_{T_{X, J}}} A \;\,+\;\, 2\,
    \nabla_g f \;\neg\; \nabla^{1, 0}_{g, J}\, A \;.
  \end{eqnarray*}
Moreover the identity (\ref{weitz-A}) in lemma \ref{Cx-Weitz} implies the equality
  \begin{eqnarray*}
    \partial^{\ast_g}_{_{T_{X, J}}} \partial^g_{_{T_{X, J}}} A \;\; = \;\;
   \left( \partial^{\ast_g}_{_{T_{X, J}}} \partial^g_{_{T_{X, J}}} A\right)_g^T \;.
  \end{eqnarray*}
  This combined with the identity (\ref{cx-sm-A1}) implies the required
  identity (\ref{sm-Addeldel-A}). Using the definition of the operator $
  \bar{\partial}^{\ast_{g, \Omega}}_{_{T_{X, J}}}$ and the expression (\ref{expr-adDbar}) we infer
  \begin{eqnarray*}
    \bar{\partial}^{\ast_{g, \Omega}}_{_{T_{X, J}}} \partial^g_{_{T_{X,
    J}}} A & = & - \;\,2\, e^f \tmop{Tr}_g  \left[ \nabla^{1, 0}_{g, J}  \left(
    e^{- f} \partial^g_{_{T_{X, J}}} A \right) \right] \\
    &  & \\
    & = & \bar{\partial}^{\ast_g}_{_{T_{X, J}}} \partial^g_{_{T_{X, J}}}
    A + 2 \tmop{Tr}_g  \left( \partial_{_J} f \otimes_{_J} \partial^g_{_{T_{X,
    J}}} A \right)\\
    &  & \\
    & = & \bar{\partial}^{\ast_g}_{_{T_{X, J}}} \partial^g_{_{T_{X, J}}}
    A \;\,-\;\, 2\, \nabla^{1, 0}_{g, J} \,A\, \nabla_g f \;.
  \end{eqnarray*}
  We observe in fact that at the center $p_0$ of any $J$-holomorphic geodesic coordinates hold the
  following equalities
  \begin{eqnarray*}
    \tmop{Tr}_g \left( \partial_{_J} f \otimes_{_J} \partial^g_{_{T_{X, J}}} A
    \right) & = & 2\, f_l  \left( \bar{\zeta}_l \;\neg_{_J} \partial^g_{_{T_{X,
    J}}} A \right)\\
    &  & \\
    & = & - \;\,2\, f_l\, \partial_p \,C_{k, \bar{l}} \;\zeta^{\ast}_p \otimes \zeta_k \;\,+\;\,
    \tmop{Conj}\\
    &  & \\
    & = & - \;\,\nabla^{1, 0}_{g, J} \,A\, \nabla_g f\; .
  \end{eqnarray*}
  On the other hand using the definition of $\bar{\partial}^{\ast_{g, \Omega}}_{_{T_{X, J}}}$ and 
the expression (\ref{expr-adDbar}) we obtain
  \begin{eqnarray*}
    \bar{\partial}^{\ast_{g, \Omega}}_{_{T_{X, J}}} A & = & - \;\,e^f
    \tmop{Tr}_g  \left[ \nabla^{1, 0}_{g, J}  \left( e^{- f} A \right)
    \right]\\
    &  & \\
    & = & \bar{\partial}^{\ast_g}_{_{T_{X, J}}} A \;\,+\;\, \tmop{Tr}_g  \left(
    \partial_{_J} f \otimes_{_J} A \right)\\
    &  & \\
    & = & \bar{\partial}^{\ast_g}_{_{T_{X, J}}} A \;\,+\;\, A\, \nabla_g f\;,
  \end{eqnarray*}
and thus
  \begin{eqnarray*}
    \partial^g_{_{T_{X, J}}} \bar{\partial}^{\ast_{g, \Omega}}_{_{T_{X,
    J}}} A & = & \partial^g_{_{T_{X, J}}}
    \bar{\partial}^{\ast_g}_{_{T_{X, J}}} A \;\,+\;\, \partial^g_{_{T_{X, J}}}
    \left( A \,\nabla_g f \right)\\
    &  & \\
    & = & \partial^g_{_{T_{X, J}}} \bar{\partial}^{\ast_g}_{_{T_{X, J}}}
    A \;\,+\;\, \nabla^{1, 0}_{g, J} \,A\, \nabla_g f \;\,+\;\, A\, \bar{\partial}_{_{T_{X,
    J}}} \,\nabla_g f \;.
\end{eqnarray*}
In fact
\begin{eqnarray*}
    2 \,\partial^g_{_{T_{X, J}}} \left( A \,\nabla_g f \right) & = & \nabla_g 
    \left( A \,\nabla_g f \right) \;\,-\;\, J\, \nabla_{g, J \cdot}  \left( A \,\nabla_g f
    \right)\\
    &  & \\
    & = & \nabla_g A \,\nabla_g f \;\,+\;\, A\, \nabla^2_g f 
\\
\\
&-& J\, \nabla_{g, J \cdot} \,A\,
    \nabla_g f \;\,-\;\, J\, A \,\nabla_{g, J \cdot} \nabla_g f\\
    &  & \\
    & = & 2\, \nabla^{1, 0}_{g, J} \,A\, \nabla_g f \;\,+\;\, A \left( \nabla^2_g f \;\,+\;\, J\,
    \nabla_{g, J \cdot} \nabla_g f \right)\\
    &  & \\
    & = & 2\, \nabla^{1, 0}_{g, J} \,A\, \nabla_g f \;\,+\;\, 2\, A\,
    \bar{\partial}_{_{T_{X, J}}} \nabla_g f \;.
\end{eqnarray*}
We observe now that in degree $1$ hold the identity
\begin{equation}
    \label{comut-del-addelb} \partial^g_{_{T_{X, J}}}
    \bar{\partial}^{\ast_g}_{_{T_{X, J}}} \;\,+\;\, \frac{1}{2} \,
    \bar{\partial}^{\ast_g}_{_{T_{X, J}}} \partial^g_{_{T_{X, J}}} \;\;=\;\; J
    \left[ \partial^g_{_{T_{X, J}}}, \left[ \omega^{\ast}, \partial^g_{_{T_{X,
    J}}} \right] \right] \;\;=\;\; 0\; .
\end{equation}
In fact the first equality in (\ref{comut-del-addelb}) follows from a
  standard K\"ahler identity. The last equality in (\ref{comut-del-addelb})
  follows from the graded Jacobi identity and the identity
  \begin{eqnarray*}
    \left( \partial^g_{_{T_{X, J}}} \right)^2 \;\; = \;\; 0\; .
  \end{eqnarray*}
  Combining the previous formulas we infer the identity
  (\ref{com-del-AddbarOm}). Using the definition of the adjoint operator $\bar{\partial}^{\ast_{g, \Omega}}_{_{T_{X, J}}}$ and 
the expression (\ref{expr-adDbar}) we expand the therm
  \begin{eqnarray*}
    \bar{\partial}^{\ast_{g, \Omega}}_{_{T_{X, J}}} 
    \bar{\partial}_{_{T_{X, J}}} B & = & - \;\,2\, e^f \tmop{Tr}_g  \left[
    \nabla^{1, 0}_{g, J}  \left( e^{- f}  \bar{\partial}_{_{T_{X, J}}}
    B \right) \right]\\
    &  & \\
    & = & \bar{\partial}^{\ast_g}_{_{T_{X, J}}} 
    \bar{\partial}_{_{T_{X, J}}} B \;\,+\;\, 2 \tmop{Tr}_g  \left( \partial_{_J}
    f \otimes_{_J} \bar{\partial}_{_{T_{X, J}}} B \right) \\
    &  & \\
    & = & \bar{\partial}^{\ast_g}_{_{T_{X, J}}} 
    \bar{\partial}_{_{T_{X, J}}} B \;\,+\;\, 2\, \nabla_g f \;\neg\; \nabla^{0, 1}_{g,
    J} \,B\; .
  \end{eqnarray*}
  In fact at the point $p_0$ hold the
  following equalities
  \begin{eqnarray*}
    \tmop{Tr}_g  \left( \partial_{_J} f \otimes_{_J}
    \bar{\partial}_{_{T_{X, J}}} B \right) & = & 2\, f_r  \left(
    \bar{\zeta}_r \;\neg_{_J}  \bar{\partial}_{_{T_{X, J}}} B \right)\\
    &  & \\
    & = & 2\, f_r \partial_{\bar{r}} \;B_{k, \bar{l}} \;\zeta^{\ast}_k \otimes
    \zeta_l \;\,+\;\, \tmop{Conj}\\
    &  & \\
    & = & \nabla_g f \;\neg\; \nabla^{0, 1}_{g, J} \,B\; .
  \end{eqnarray*}
  Then identity (\ref{sm-Addbardbar}) follows combining lemma \ref{Cx-Weitz}
  with the identity identity
  \begin{equation}
    \label{Kh-sm-B1} \xi \;\neg\; \nabla_{g, J}^{1, 0} \,B \;\;=\;\; \left( \xi \;\neg\;
    \nabla_{g, J}^{0, 1} \,B \right)_g^T \;.
  \end{equation}
  This last follows immediately from the symmetry identity $B = B_g^T$. Using the
  definition of the operator $\partial^{\ast_{g, \Omega}}_{_{T_{X, J}}}$ and (\ref{expr-adDel}) we
  infer the equalities
  \begin{eqnarray*}
    \partial^{\ast_{g, \Omega}}_{_{T_{X, J}}} \bar{\partial}_{_{T_{X,
    J}}} B & = & -\;\, 2\, e^f \tmop{Tr}_g  \left[ \nabla^{0, 1}_{g, J}  \left(
    e^{- f}  \bar{\partial}_{_{T_{X, J}}} B \right) \right]\\
    &  & \\
    & = & \partial^{\ast_g}_{_{T_{X, J}}} \bar{\partial}_{_{T_{X, J}}} B
    \;\,+\;\, 2 \tmop{Tr}_g  \left( \bar{\partial}_{_J} f \otimes_{_J}
    \bar{\partial}_{_{T_{X, J}}} B \right)\\
    &  & \\
    & = & \partial^{\ast_g}_{_{T_{X, J}}} \bar{\partial}_{_{T_{X, J}}} B
    \;\,-\;\, 2\, \nabla^{0, 1}_{g, J} \,B\, \nabla_g f\; .
  \end{eqnarray*}
  Indeed at the point $p_0$ hold the
  equalities
  \begin{eqnarray*}
    \tmop{Tr}_g  \left( \bar{\partial}_{_J} f \otimes_{_J}
    \bar{\partial}_{_{T_{X, J}}} B \right) & = & 2\, f_{\bar{k}}  \left(
    \zeta_k \;\neg_{_J}  \bar{\partial}_{_{T_{X, J}}} B \right)\\
    &  & \\
    & = & -\;\, 2\, f_{\bar{k}} \;\partial_{\bar{p}} \;B_{k, \bar{l}} \;
    \bar{\zeta}^{\ast}_p \otimes \zeta_l \;\,+\;\, \tmop{Conj}\\
    &  & \\
    & = & - \;\,\nabla^{0, 1}_{g, J} \,B\, \nabla_g f \;.
  \end{eqnarray*}
  On the other hand combining the definition of $\partial^{\ast_{g, \Omega}}_{_{T_{X, J}}}$ and (\ref{expr-adDel}) we obtain
  \begin{eqnarray*}
    \partial^{\ast_{g, \Omega}}_{_{T_{X, J}}} B & = & - \;\,e^f \tmop{Tr}_g 
    \left[ \nabla^{0, 1}_{g, J}  \left( e^{- f} B \right) \right]\\
    &  & \\
    & = & \partial^{\ast_g}_{_{T_{X, J}}} B \;\,+\;\, \tmop{Tr}_g  \left(
    \bar{\partial}_{_J} f \otimes_{_J} B \right)\\
    &  & \\
    & = & \partial^{\ast_g}_{_{T_{X, J}}} B \;\,+\;\, B\, \nabla_g f\;,
  \end{eqnarray*}
  and thus
  \begin{eqnarray*}
    \bar{\partial}_{_{T_{X, J}}} \partial^{\ast_{g, \Omega}}_{_{T_{X,
    J}}} B & = & \bar{\partial}_{_{T_{X, J}}} \partial^{\ast_g}_{_{T_{X,
    J}}} B \;\,+\;\, \bar{\partial}_{_{T_{X, J}}} \left( B \,\nabla_g f \right)\\
    &  & \\
    & = & \bar{\partial}_{_{T_{X, J}}} \partial^{\ast_g}_{_{T_{X, J}}} B
    \;\,+\;\, \nabla^{0, 1}_{g, J} \,B\, \nabla_g f \;\,+\;\, B\, \bar{\partial}_{_{T_{X, J}}}
    \nabla_g f \;.
  \end{eqnarray*}
  In fact
  \begin{eqnarray*}
    2 \,\bar{\partial}_{_{T_{X, J}}}  \left( B \,\nabla_g f \right) & = &
    \nabla_g  \left( B \,\nabla_g f \right) \;\,+\;\, J\, \nabla_{g, J \cdot}  \left( B\,
    \nabla_g f \right)\\
    &  & \\
    & = & \nabla_g B \,\nabla_g f \;\,+\;\, B\, \nabla^2_g f 
\\
\\
&+& J\, \nabla_{g, J \cdot} \,B\,
    \nabla_g f \;\,+\;\, J\, \,B \nabla_{g, J \cdot} \nabla_g f\\
    &  & \\
    & = & 2 \,\nabla^{0, 1}_{g, J} \,B\, \nabla_g f \;\,+\;\, B \left( \nabla^2_g f \;\,+\;\, J\,
    \nabla_{g, J \cdot} \nabla_g f \right)\\
    &  & \\
    & = & 2 \,\nabla^{0,1}_{g, J} \,B\, \nabla_g f \;\,+\;\, 2\, B\,
    \bar{\partial}_{_{T_{X, J}}} \nabla_g f \;.
  \end{eqnarray*}
  Then (\ref{com-dbar-AddelOm}) follows combining the previous identities with
  the analogue of (\ref{comut-del-addelb}) in degree 1
\begin{eqnarray}
\label{comut-bar-addel}  \bar{\partial}_{_{T_{X, J}}}
     \partial^{\ast_g}_{_{T_{X, J}}} \;\,+\;\, \frac{1}{2}\, \partial^{\ast_g}_{_{T_{X,
     J}}} \bar{\partial}_{_{T_{X, J}}} \;\;=\;\; J \left[
     \bar{\partial}_{_{T_{X, J}}}, \left[ \omega^{\ast},
     \bar{\partial}_{_{T_{X, J}}} \right] \right] \;\;=\;\; 0\; . 
\end{eqnarray}
\end{proof}

\section{ Variation formulas for the complex components of the
$\Omega$-Bakry-Emery-Ricci
endomorphism}

In this section and in the next one we will use in a crucial way the symmetry
properties of the variations of K\"ahler structures obtained in \cite{Pal1}. For
this reason we will remind here a few key facts obtained in \cite{Pal1}. First of
all we remind (see lemma 3, identity (16) in \cite{Pal1}) that a smooth path
$(g_t, J_t)_t \subset \mathcal{K}\mathcal{S}$ satisfies $\dot{J}_t = (
\dot{J}_t)_{g_t}^T$ iff the family $(J_t)_{t \geqslant 0}$ is solution of the
$\tmop{ODE}$
$$
 2 \,\dot{J}_t \;\;=\;\; J_t\,  \dot{g}_t^{\ast} \;\,- \;\,
   \dot{g}_t^{\ast} J_t \;. 
$$
We remind also the following basic fact obtained in \cite{Pal1}.

\begin{lemma}
  Let $(g_t)_{t \geqslant 0}$ be an arbitrary smooth family of Riemannian
  metrics and let $(J_t)_{t \geqslant 0}$ be a family of endomorphisms of
  $T_X$ solution of the $\tmop{ODE}$
$$
 2\, \dot{J}_t \;\;=\;\; J_t \, \dot{g}_t^{\ast} \;\,- \;\,
     \dot{g}_t^{\ast} J_t\;, 
$$
  with initial conditions $J^2_0 \; = \; - \;\I_{T_X}$
  and $(J_0)^T_{g_0} \; = \; -\; J_0$. Then this
  conditions are preserved in time i.e. $J^2_t\; =\; -\; \I_{T_X}$ and $(J_t)^T_{g_t} \; =\; -\; J_t$ for all $t \geqslant 0$. 
\end{lemma}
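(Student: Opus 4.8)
The plan is to treat both assertions as pointwise uniqueness statements for linear ordinary differential equations. Fix $x \in X$; then $J_t(x)$ evolves, via the given $\tmop{ODE}$ $2\dot{J}_t = J_t\dot{g}_t^{\ast} - \dot{g}_t^{\ast}J_t$, inside the finite dimensional space $\tmop{End}_{_{\mathbbm{R}}}(T_xX)$, with coefficients depending smoothly on $t$ through $S_t \assign \dot{g}_t^{\ast} = g_t^{-1}\dot{g}_t$. The one structural fact I would record about $S_t$ is that, $\dot{g}_t$ being a symmetric $2$-tensor, $S_t$ is $g_t$-self-adjoint, i.e. $(S_t)^T_{g_t} = S_t$, equivalently $g_t(S_t\xi,\eta) = g_t(\xi,S_t\eta)$. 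No compactness of $X$ is needed, and the two conditions can be handled independently of one another.

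For the first condition I would set $P_t \assign J_t^2 + \I_{T_X}$ and differentiate. Plugging the $\tmop{ODE}$ into $2\dot{P}_t = 2(\dot{J}_tJ_t + J_t\dot{J}_t)$, the cross terms $J_tS_tJ_t$ cancel and one is left with $2\dot{P}_t = J_t^2 S_t - S_tJ_t^2$; substituting $J_t^2 = P_t - \I_{T_X}$, the $\I_{T_X}$-terms cancel and one obtains the linear homogeneous equation $2\dot{P}_t = P_tS_t - S_tP_t$. Since $P_0 = 0$ by hypothesis, uniqueness for linear $\tmop{ODE}$'s forces $P_t \equiv 0$, i.e. $J_t^2 = -\I_{T_X}$ for all $t$.

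For the second condition I would rephrase $(J_t)^T_{g_t} = -J_t$ as the vanishing of the symmetric bilinear form $\sigma_t(\xi,\eta) \assign g_t(J_t\xi,\eta) + g_t(J_t\eta,\xi)$, which is (twice) the symmetric part of $\omega_t \assign g_tJ_t$; by hypothesis $\sigma_0 = 0$. Differentiating $\sigma_t$ and using $\dot{g}_t(v,w) = g_t(S_tv,w) = g_t(v,S_tw)$ together with the $\tmop{ODE}$ for $\dot{J}_t$, I expect the eight resulting terms to recombine — after moving $S_t$ across $g_t$ via its $g_t$-self-adjointness — into $2\dot{\sigma}_t(\xi,\eta) = \sigma_t(S_t\xi,\eta) + \sigma_t(\xi,S_t\eta)$. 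This is again a linear homogeneous $\tmop{ODE}$ for $\sigma_t$, so $\sigma_0 = 0$ gives $\sigma_t \equiv 0$, i.e. $(J_t)^T_{g_t} = -J_t$ for all $t$.

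There is no genuine analytic obstacle here; the only delicate point is the algebraic bookkeeping of transposes, specifically verifying that the right-hand sides of the evolution equations for $P_t$ and for $\sigma_t$ really close up as \emph{linear homogeneous} expressions in $P_t$, respectively $\sigma_t$. This is the crux of the computation: it is exactly the cancellation of the inhomogeneous pieces (the $\I_{T_X}$-term in the first case, the purely $g_t$-symmetric part of $\omega_t$ in the second) that makes the elementary pointwise uniqueness theorem for linear $\tmop{ODE}$'s applicable and hence closes the argument.
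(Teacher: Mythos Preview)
Your argument is correct. Both computations close up exactly as you claim: for $P_t \assign J_t^2 + \I_{T_X}$ one gets $2\dot P_t = P_t S_t - S_t P_t$, and for $\sigma_t(\xi,\eta) \assign g_t(J_t\xi,\eta) + g_t(J_t\eta,\xi)$ one gets $2\dot\sigma_t(\xi,\eta) = \sigma_t(S_t\xi,\eta) + \sigma_t(\xi,S_t\eta)$, using only the $g_t$-self-adjointness of $S_t = \dot g_t^\ast$. Pointwise uniqueness for linear ODE's then finishes both claims.

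As for comparison with the paper: this lemma is only \emph{stated} here and attributed to \cite{Pal1}; no proof is reproduced in the present paper, so there is nothing to compare against directly. Your approach is the natural one and is presumably what is carried out in \cite{Pal1}. One minor stylistic remark: the second condition can equivalently be handled at the level of endomorphisms by setting $Q_t \assign J_t + (J_t)^T_{g_t}$ and using the variation formula $\frac{d}{dt}(A^T_{g_t}) = (\dot A)^T_{g_t} + [A^T_{g_t}, S_t]$, which yields $2\dot Q_t = Q_t S_t - S_t Q_t$ in exact parallel with your first computation; your bilinear-form version is of course equivalent.
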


For any $J \in \mathcal{J}$ and any $J$-invariant $g \in \mathcal{M}$ we
define in \cite{Pal1} the vector space
\begin{eqnarray*}
\mathbbm{D}^J_g \;\; \assign \;\; \Big\{ v \in C^{\infty} \left( X,
  S_{_{\mathbbm{R}}}^2 T^{\ast}_X \right) \mid \hspace{0.25em} \nabla_{g,
  J}^{0, 1} \,v_g^{\ast} \cdot \xi \;\;=\;\; \left( \nabla_{g, J}^{0, 1} \,v_g^{\ast}
  \cdot \xi \right)_g^T\;, \;\;\forall \xi \;\in\; T_X \Big\} \;.
\end{eqnarray*}
With this notation hold the following important lemma obtained in \cite{Pal1}.

\begin{lemma}
  \label{Kah-crv}Let $(g_t)_{t \geqslant 0}$ be an arbitrary smooth family of
  Riemannian metrics and let $(J_t)_{t \geqslant 0}$ be a family of
  endomorphisms of $T_X$ solution of the $\tmop{ODE}$
$$
2\, \dot{J}_t \;\;=\;\; J_t\,  \dot{g}_t^{\ast} \;\,- \;\,
     \dot{g}_t^{\ast} J_t\;, 
$$
  with K\"ahler initial data $(J_0, g_0)$. Then $(J_t, g_t)_{t \geqslant 0}$
  is a smooth family of K\"ahler structures if and only if $ \dot{g}_t \in
  \mathbbm{D}^{J_t}_{g_t}$  for all $t \geqslant 0$.
\end{lemma}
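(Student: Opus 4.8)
The plan is to use the previous lemma to strip away the algebraic part of the Kähler condition, and then to differentiate the remaining parallelism $\nabla_{g_t}J_t=0$ along the given ODE. By the previous lemma the conditions $J_t^2=-\I_{T_X}$ and $(J_t)^T_{g_t}=-J_t$ (equivalently, $g_t$ is $J_t$-invariant and $J_t\in\mathcal{J}$) are preserved for all $t\geqslant 0$; moreover, as $\dot{g}_t$ is $g_t$-symmetric, the ODE forces $\dot{J}_t=(\dot{J}_t)^T_{g_t}$ along the path. Hence $(J_t,g_t)\in\mathcal{KS}$ for every $t$ if and only if $\nabla_{g_t}J_t\equiv 0$, and $\mathbbm{D}^{J_t}_{g_t}$ is well defined along the path precisely because $g_t$ stays $J_t$-invariant.

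The core computation is to expand
$$\frac{d}{dt}\bigl(\nabla_{g_t}J_t\bigr)\;=\;\dot{\nabla}_{g_t}\cdot J_t\;+\;\nabla_{g_t}\dot{J}_t,$$
where $\dot{\nabla}_{g_t}$ is the (tensorial, symmetric) first variation of the Levi-Civita connection expressed through $\nabla_{g_t}\dot{g}_t$ (see \cite{Bes} or identity (4) in \cite{Pal1}), and where $\dot{J}_t$ is replaced using the ODE $2\dot{J}_t=J_t\dot{g}_t^\ast-\dot{g}_t^\ast J_t$; covariantly differentiating that ODE produces $\nabla_{g_t}\dot{J}_t$ as a sum of a term built from $\nabla_{g_t}\dot{g}_t^\ast$ (paired algebraically with $J_t$) and a term built from $\nabla_{g_t}J_t$ (paired algebraically with $\dot{g}_t^\ast$). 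I would then reorganize the whole expression, using only the relations available along the path ($J_t^2=-\I_{T_X}$, $g_t$ $J_t$-invariant, $\dot{g}_t=(\dot{g}_t)^T_{g_t}$) together with the splitting $\dot{g}_t=\dot{g}'_t+\dot{g}''_t$ into $J_t$-invariant and $J_t$-anti-invariant parts and the relation $(\dot{g}''_t)^\ast=-J_t\dot{J}_t$ (derived in the proof of corollary \ref{rm-vr-Rc-fm} from lemma 3, identity (15) in \cite{Pal1}), into the form
$$\frac{d}{dt}\bigl(\nabla_{g_t}J_t\bigr)\;=\;\mathcal{A}_t\bigl(\nabla_{g_t}J_t\bigr)\;+\;c\,\mathcal{E}_t,$$
with $c\neq 0$ a universal constant, $\mathcal{A}_t$ a bundle endomorphism (algebraic in its argument, with coefficients depending smoothly on $g_t,J_t,\dot{g}_t$), and $\mathcal{E}_t$ a tensor assembled from $\nabla^{0,1}_{g_t,J_t}\dot{g}_t^\ast$ and its $g_t$-transpose whose vanishing is equivalent to $\xi\;\neg\;\nabla^{0,1}_{g_t,J_t}\dot{g}_t^\ast$ being $g_t$-symmetric for all $\xi$, i.e. to $\dot{g}_t\in\mathbbm{D}^{J_t}_{g_t}$. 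This identification is a bookkeeping of the $(1,0)$ and $(0,1)$ components, best carried out — as in the proof of lemma \ref{Lm-var-O-Rc-fm} — in $J_t$-holomorphic $g_t$-geodesic coordinates centered at an arbitrary point, and the delicate point is to verify that \emph{every} term not captured by $\mathcal{E}_t$ is genuinely algebraic in $\nabla_{g_t}J_t$.

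Granting this identity, the two implications follow at once. If $(J_t,g_t)_t\subset\mathcal{KS}$ then $\nabla_{g_t}J_t\equiv 0$, hence its time derivative vanishes and $\mathcal{A}_t(0)=0$, forcing $\mathcal{E}_t\equiv 0$, i.e. $\dot{g}_t\in\mathbbm{D}^{J_t}_{g_t}$ for all $t$. Conversely, if $\dot{g}_t\in\mathbbm{D}^{J_t}_{g_t}$ for all $t\geqslant 0$ then $\mathcal{E}_t\equiv 0$ and the tensor $P_t:=\nabla_{g_t}J_t$ solves the linear homogeneous ODE $\dot{P}_t=\mathcal{A}_t(P_t)$; since $\mathcal{A}_t$ is a pointwise bundle map and the Kähler initial data gives $P_0=0$, uniqueness for this ODE — or a Gronwall estimate for $\frac{d}{dt}\int_X|P_t|^2_{g_t}\,dV_{g_t}$ — yields $P_t\equiv 0$ for all $t$, so $(J_t,g_t)_t\subset\mathcal{KS}$ by the reduction in the first paragraph.

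The main obstacle is the second paragraph: extracting from the raw time-derivative of $\nabla_{g_t}J_t$ exactly the operator $\nabla^{0,1}_{g_t,J_t}$ and the symmetrization defining $\mathbbm{D}^{J_t}_{g_t}$, while keeping the remaining terms algebraic in $\nabla_{g_t}J_t$ and using only the algebraic Kähler relations known to be preserved along the ODE.
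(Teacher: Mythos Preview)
The paper does not contain a proof of this lemma: it is stated in Section~5 with the explicit attribution ``We remind also the following important lemma obtained in \cite{Pal1}.'' So there is nothing in the present paper to compare your argument against.

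That said, your strategy is the natural one and is almost certainly what is carried out in \cite{Pal1}: reduce to the single condition $\nabla_{g_t}J_t=0$ via the preceding lemma, differentiate $P_t:=\nabla_{g_t}J_t$ in time using the standard variation formula for the Levi-Civita connection and the ODE for $J_t$, and rewrite the result as a linear ODE $\dot P_t=\mathcal{A}_t(P_t)+c\,\mathcal{E}_t$ with source term $\mathcal{E}_t$ encoding exactly the $\mathbbm{D}^{J_t}_{g_t}$-condition. Both implications then follow from uniqueness for linear ODEs, as you describe.

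The honest caveat you already flag is real: the proposal is a strategy, not a proof, because the identity in the second paragraph is asserted but not established. The delicate point is that when $\nabla_{g_t}J_t\neq 0$ one cannot freely commute $\nabla_{g_t}$ with the type decompositions $(\cdot)^{1,0}_{J_t}$, $(\cdot)^{0,1}_{J_t}$; every such commutation produces a term in $\nabla_{g_t}J_t$ that must be tracked into $\mathcal{A}_t$. Concretely, after inserting $2\dot J_t=J_t\dot g_t^\ast-\dot g_t^\ast J_t$ and the variation formula for $\dot\nabla_{g_t}$, the residual (non-$P_t$) part of $\frac{d}{dt}(\nabla_{g_t}J_t)(\xi,\eta)$ is built from $\nabla_\bullet\dot g_t^\ast$ contracted with $J_t$ in various slots, and the claim that this reorganizes into the symmetrization defect $\nabla^{0,1}_{g_t,J_t}\dot g_t^\ast\cdot\xi-(\nabla^{0,1}_{g_t,J_t}\dot g_t^\ast\cdot\xi)^T_{g_t}$, modulo terms in $P_t$, is exactly the computation that has to be done. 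Until that identity is written down and checked, the argument is incomplete.
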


Moreover (see \cite{Pal1}) for any K\"ahler structure $(J, g)$ hold the identity
\begin{eqnarray}
  \label{Kah-D} \mathbbm{D}^J_g \;\,=\;\, \Big\{ v \in C^{\infty} \left( X,
  S_{_{\mathbbm{R}}}^2 T^{\ast}_X \right) \mid \hspace{0.25em}
  \partial^g_{_{T_{X, J}}} (v_g^{\ast})_{_J}^{1, 0} \;\,=\;\, 0\;,\;\;
  \bar{\partial}_{_{T_{X, J}}} (v_g^{\ast})_{_J}^{0, 1} \;\,=\;\, 0 \Big\}\;.\,
\end{eqnarray}
We show now our general first variation formulas for the complex components of
the $\Omega$-Bakry-Emery-Ricci endomorphism.

\begin{theorem}
  Let $(g_t, J_t)_t \subset \mathcal{K}\mathcal{S}
  $ be a smooth path such that $\dot{J}_t = ( \dot{J}_t)_{g_t}^T$. Let
  also $f_t \assign \log \frac{d V_{g_t}}{\Omega}$, with $\Omega > 0$ a smooth
  volume form. Then hold the following first variation formulas for the
  complex components of the $\Omega$-Bakry-Emery-Ricci endomorphism;
  \begin{eqnarray}
    2\, \frac{d}{dt} \tmop{Ric}^{\ast}_{_{J_t}} (\Omega)_{g_t} & = & -\;\,
    \partial^{g_t}_{_{T_X, J_t}} \bar{\partial}^{\ast_{g_t,
    \Omega}}_{_{T_{X, J_t}}}  \dot{g}_t^{0, 1} \;\,-\;\, \left( \partial^{g_t}_{_{T_X,
    J_t}} \bar{\partial}^{\ast_{g_t, \Omega}}_{_{T_{X, J_t}}} \nonumber
    \dot{g}_t^{0, 1} \right)_t^T\\\nonumber
    &  & \\
    & + & \left[ \tmop{Ric}^{\ast}_{_{J_t}} (\Omega)_{g_t}, \dot{g}_t^{0, 1}
    \right] \;\,-\;\, 2\, \dot{g}_t^{1, 0} \tmop{Ric}^{\ast}_{_{J_t}} (\Omega)_{g_t}\;,\label{var-EndOm-RcCx}
  \end{eqnarray}
  \begin{eqnarray}
    2 \,\frac{d}{dt}  \left( \bar{\partial}_{_{T_{X, J_t}}} \nabla_{g_t}
    f_t  \right) & = & - \;\,\bar{\partial}_{_{T_{X, J_t}}}
    \partial^{\ast_{g_t, \Omega}}_{_{T_{X, J_t}}}  \dot{g}_t^{1, 0} \;\,-\;\, \left(
    \bar{\partial}_{_{T_{X, J_t}}} \partial^{\ast_{g_t, \Omega}}_{_{T_{X,
    J_t}}}  \dot{g}_t^{1, 0} \right)_t^T\nonumber
\\\nonumber
    &  & \\
    & - & \left( J_t \nabla_{g_t} f_t \right) \;\neg\; \left( J_t \nabla_{g_t} 
    \dot{g}_t^{0, 1} \right)\nonumber
\\\nonumber
    &  & \\
    & + & \left[ \bar{\partial}_{_{T_{X, J_t}}}\nabla_{g_t} f_t\,,
    \dot{g}_t^{\ast} \right] \nonumber
\\\nonumber
\\
&-& \dot{g}_t^{0, 1} \,\partial^{g_t}_{_{T_X, J_t}}
    \nabla_{g_t} f_t \;\,-\;\, \partial^{g_t}_{_{T_X, J_t}} \nabla_{g_t} f_t 
    \,\dot{g}_t^{0, 1}\;,\label{var-II-cmpx}
  \end{eqnarray}
  with $\dot{g}_t^{1, 0} \assign ( \dot{g}_t^{\ast})_{_{J_t}}^{1, 0}$ and
  $\dot{g}_t^{0, 1} \assign ( \dot{g}_t^{\ast})_{_{J_t}}^{0, 1} .$
\end{theorem}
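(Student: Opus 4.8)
The plan is to deduce both formulas (\ref{var-EndOm-RcCx}) and (\ref{var-II-cmpx}) by the same three–step reduction. First, pass from the variation of each complex component of the endomorphism $\tmop{Ric}^{\ast}_{g_t}(\Omega)$ to the variation of the associated symmetric two–tensor, which is already available from Corollary~\ref{rm-vr-Rc-fm} and Corollary~\ref{vr-ant-Hess}. Second, rewrite the operator $\tmop{div}^{^{_{_{\Omega}}}}_{g_t}\mathcal{D}_{g_t}$ appearing there by Lemma~\ref{Endo-Div}, obtaining a symmetrized covariant Laplacian applied to $\dot{g}_t^{0,1}$, respectively to $\dot{g}_t^{1,0}$. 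Third, convert that covariant Laplacian into the complex operators $\partial^{g_t}_{_{T_{X, J_t}}}\bar{\partial}^{\ast_{g_t, \Omega}}_{_{T_{X, J_t}}}$, respectively $\bar{\partial}_{_{T_{X, J_t}}}\partial^{\ast_{g_t, \Omega}}_{_{T_{X, J_t}}}$, using the Weitzenb\"ock identities (\ref{weitz-A}) and (\ref{weitz-B}) of Lemma~\ref{Cx-Weitz} together with the symmetry and commutation identities (\ref{sm-Addeldel-A})--(\ref{com-dbar-AddelOm}). Throughout, the hypotheses $(g_t, J_t)\subset\mathcal{K}\mathcal{S}$ and $\dot{J}_t=(\dot{J}_t)^T_{g_t}$ are used via the characterization recalled at the beginning of Section~5 (so $2\dot{J}_t=J_t\dot{g}_t^{\ast}-\dot{g}_t^{\ast}J_t$, equivalently $\dot{g}_t^{0,1}=-\,J_t\dot{J}_t$ as in the proof of Corollary~\ref{rm-vr-Rc-fm}) and via Lemma~\ref{Kah-crv}, which gives $\dot{g}_t\in\mathbbm{D}^{J_t}_{g_t}$; by (\ref{Kah-D}) this last yields the two crucial vanishings $\bar{\partial}_{_{T_{X, J_t}}}\dot{g}_t^{0,1}=0$ and $\partial^{g_t}_{_{T_{X, J_t}}}\dot{g}_t^{1,0}=0$.

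\emph{The $J$-invariant component.} Since $\tmop{Ric}^{\ast}_{_{J_t}}(\Omega)_{g_t}$ is the $J$-linear part of $\tmop{Ric}^{\ast}_{g_t}(\Omega)$, the decomposition (\ref{dec-end-Ric}) and the identity $\tmop{Ric}(g)=-\,\tmop{Ric}_{_J}(\omega)J$ give $\tmop{Ric}^{\ast}_{_{J_t}}(\Omega)_{g_t}=-\,g_t^{-1}\tmop{Ric}_{_{J_t}}(\Omega)J_t$. Differentiating in $t$ and using $\dot{g}_t^{-1}=-\,g_t^{-1}\dot{g}_t g_t^{-1}$ turns Corollary~\ref{rm-vr-Rc-fm} into a formula for $2\,\frac{d}{dt}\tmop{Ric}^{\ast}_{_{J_t}}(\Omega)_{g_t}$ whose leading term is $\big(\tmop{div}^{^{_{_{\Omega}}}}_{g_t}\mathcal{D}_{g_t}\dot{g}''_t\big)^{\ast}_{g_t}$ and which already produces the term $-\,2\,\dot{g}_t^{\ast}\tmop{Ric}^{\ast}_{_{J_t}}(\Omega)_{g_t}$. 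Applying Lemma~\ref{Endo-Div} with $u=\dot{g}''_t$, so $u^{\ast}_{g_t}=\dot{g}_t^{0,1}$, and using $\bar{\partial}_{_{T_{X, J_t}}}\dot{g}_t^{0,1}=0$ to get $\nabla_{_{T_X, g_t}}\dot{g}_t^{0,1}=\partial^{g_t}_{_{T_{X, J_t}}}\dot{g}_t^{0,1}$, the covariant Laplacian decomposes by bidegree into a $\partial^{\ast_{g_t,\Omega}}_{_{T_{X, J_t}}}\partial^{g_t}_{_{T_{X, J_t}}}$-term and a $\bar{\partial}^{\ast_{g_t,\Omega}}_{_{T_{X, J_t}}}\partial^{g_t}_{_{T_{X, J_t}}}$-term. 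The first is handled by (\ref{weitz-A}) with $A=\dot{g}_t^{0,1}$, together with the passage from the $g_t$-adjoint to the $\Omega$-adjoint and the symmetry (\ref{sm-Addeldel-A}) (both established in the lemma proving (\ref{sm-Addeldel-A})--(\ref{com-dbar-AddelOm})) and the transpose identities (\ref{cx-sm-A1})--(\ref{cx-sm-A2}); the second is traded, via (\ref{com-del-AddbarOm}), for $-\,\partial^{g_t}_{_{T_{X, J_t}}}\bar{\partial}^{\ast_{g_t,\Omega}}_{_{T_{X, J_t}}}\dot{g}_t^{0,1}$ up to a term $\dot{g}_t^{0,1}\,\bar{\partial}_{_{T_{X, J_t}}}\nabla_{g_t}f_t$. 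After subtracting $\Delta^{^{_{_{\Omega}}}}_{g_t}\dot{g}_t^{0,1}$ and symmetrizing as prescribed by Lemma~\ref{Endo-Div}, the second order part is exactly $-\,\partial^{g_t}_{_{T_X, J_t}}\bar{\partial}^{\ast_{g_t,\Omega}}_{_{T_{X, J_t}}}\dot{g}_t^{0,1}$ and its transpose, which is the principal part of (\ref{var-EndOm-RcCx}).

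\emph{The $J$-anti-invariant component.} I would run the identical argument with Corollary~\ref{vr-ant-Hess} in place of Corollary~\ref{rm-vr-Rc-fm}, Lemma~\ref{Endo-Div} applied to $u=\dot{g}'_t$ with $u^{\ast}_{g_t}=\dot{g}_t^{1,0}$, the vanishing $\partial^{g_t}_{_{T_{X, J_t}}}\dot{g}_t^{1,0}=0$ giving $\nabla_{_{T_X, g_t}}\dot{g}_t^{1,0}=\bar{\partial}_{_{T_{X, J_t}}}\dot{g}_t^{1,0}$, and then (\ref{weitz-B}), (\ref{sm-Addbardbar}) and (\ref{com-dbar-AddelOm}) with $B=\dot{g}_t^{1,0}$ producing the operator $-\,\bar{\partial}_{_{T_{X, J_t}}}\partial^{\ast_{g_t,\Omega}}_{_{T_{X, J_t}}}\dot{g}_t^{1,0}$ and its transpose, the principal part of (\ref{var-II-cmpx}).

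\emph{The main obstacle.} The genuine difficulty is not the extraction of the second order operator, which is clean, but the verification that the many lower order remainders assemble exactly into the commutators and first order terms on the right of (\ref{var-EndOm-RcCx}) and (\ref{var-II-cmpx}). One has to combine the curvature terms produced by (\ref{weitz-A}) and (\ref{weitz-B}) (built from $\tmop{Ric}^{\ast}_{g_t}$), the contractions against $\nabla_{g_t}f_t$ coming from the $g_t$-to-$\Omega$ adjoint passages, the $\bar{\partial}_{_{T_{X, J_t}}}\nabla_{g_t}f_t$-terms produced by (\ref{com-del-AddbarOm}) and (\ref{com-dbar-AddelOm}), and the explicit $\nabla_{g_t}f_t$, $\nabla^2_{g_t}f_t$ and $\nabla_{g_t}\dot{J}_t$ terms inherited from Corollary~\ref{rm-vr-Rc-fm} and Corollary~\ref{vr-ant-Hess}. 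The reorganization rests on three ingredients: the Hessian splitting (\ref{cx-dec-Hess}), which replaces $\nabla^2_{g_t}f_t$ by $-\,\big(i\,\partial_{_{J_t}}\bar{\partial}_{_{J_t}}f_t\big)J_t+g_t\,\bar{\partial}_{_{T_{X, J_t}}}\nabla_{g_t}f_t$ and so converts the $\tmop{Ric}^{\ast}_{g_t}$ arising in the Weitzenb\"ock formulas into $\tmop{Ric}^{\ast}_{_{J_t}}(\Omega)_{g_t}$; the identity $\dot{g}_t^{0,1}=-\,J_t\dot{J}_t$, which rewrites every occurrence of $\nabla_{g_t}\dot{J}_t$ as a covariant derivative of $\dot{g}_t^{0,1}$; and the definitions of $\nabla^{1,0}_{g_t, J_t}$, $\nabla^{0,1}_{g_t, J_t}$ together with the symmetry (\ref{Tg-cx-str}) of $\nabla^{0,1}_{g_t, J_t}\dot{J}_t$. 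I expect this final matching to be the laborious part, since it demands careful bookkeeping of the $J$-linear versus $J$-anti-linear decompositions and of the degree normalizations of the adjoint operators, which as noted in Section~4 differ from the customary ones, but it uses no idea beyond the identities already proved.
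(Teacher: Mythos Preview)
Your proposal is correct and follows essentially the same route as the paper: differentiate $\tmop{Ric}^{\ast}_{_{J_t}}(\Omega)_{g_t}=-\,g_t^{-1}\tmop{Ric}_{_{J_t}}(\Omega)J_t$ using Corollary~\ref{rm-vr-Rc-fm} (resp.\ use Corollary~\ref{vr-ant-Hess}), apply Lemma~\ref{Endo-Div} together with the vanishings from (\ref{Kah-D}), and then invoke (\ref{weitz-A}), (\ref{sm-Addeldel-A}), (\ref{com-del-AddbarOm}) (resp.\ (\ref{weitz-B}), (\ref{sm-Addbardbar}), (\ref{com-dbar-AddelOm})) to extract the principal complex operator and collect the lower order terms. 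Your assessment of the ``main obstacle'' is accurate and matches what the paper does: the curvature terms from the Weitzenb\"ock identities, the $\nabla_{g_t}f_t$-contractions from the $\Omega$-adjoint passages, and the explicit Hessian terms inherited from the corollaries combine via (\ref{cx-dec-Hess}) and the identity $\dot{g}_t^{0,1}=-\,J_t\dot{J}_t$ into the stated commutators, exactly as you outline.
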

\begin{proof}
  For notation simplicity we set $A_t \;\assign\; -\; \dot{g}_t^{0, 1} \;=\; J_t
  \dot{J}_t$ and $B_t \;\assign\; \dot{g}_t^{1, 0}$. Time deriving the trivial
  identity
  \begin{eqnarray*}
    \tmop{Ric}^{\ast}_{_{J_t}} (\Omega)_{g_t} & = & - J_t
    \tmop{Ric}^{\ast}_{_{J_t}} (\Omega)_{g_t} J_t 
\\
\\
&=& -\;\, J_t \,\omega^{- 1}_t
    \tmop{Ric}_{_{J_t}} (\Omega) J_t 
\\
\\
&=& -\;\, g^{- 1}_t \tmop{Ric}_{_{J_t}}
    (\Omega) J_t\;,
  \end{eqnarray*}
  we obtain the expression
  \begin{eqnarray*}
    2 \,\frac{d}{dt} \tmop{Ric}^{\ast}_{_{J_t}} (\Omega)_{g_t} & = & 2\,
    \dot{g}_t^{\ast} g^{- 1}_t \tmop{Ric}_{_{J_t}} (\Omega) J_t \;\,-\;\, 2\, g^{- 1}_t \,
    \frac{d}{d t}  \left( \tmop{Ric}_{_{J_t}} (\Omega)  J_t \right)\\
    &  & \\
    & = & - \;\,2\, \dot{g}_t^{\ast} \tmop{Ric}^{\ast}_{_{J_t}} (\Omega)_{g_t} \;\,+\;\,
    \left( \tmop{div}^{^{_{_{\Omega}}}}_{g_t} \mathcal{D}_{g_t}  \dot{g}''_t
    \right)^{\ast}_t \\
    &  & \\
    & + & \nabla_{g_t} f_t \;\neg\; \left( \nabla^{1, 0}_{g_t, J_t} A_t \;\,-\;\,
    \nabla^{0, 1}_{g_t, J_t} A_t \right)\\
    &  & \\
    & + & \bar{\partial}_{_{T_{X, J_t}}} \nabla_{g_t} f_t \,A_t \;\,+\;\, A_t \,
    \bar{\partial}_{_{T_{X, J_t}}} \nabla_{g_t} f_t \\
    &  & \\
    & - & (i\, \partial_{_{J_t}} \bar{\partial}_{_{J_t}} f_t)_t^{\ast} \,A_t
    \;\,-\;\, A_t \,(i\, \partial_{_{J_t}} \bar{\partial}_{_{J_t}} f_t)_t^{\ast}\;,
  \end{eqnarray*}
  thanks to corollary \ref{rm-vr-Rc-fm} and thanks to the identity
  \begin{eqnarray*}
    \partial^{g_t}_{_{T_X, J_t}} \nabla_{g_t} f_t \;\; = \;\; (i \partial_{_{J_t}}
    \bar{\partial}_{_{J_t}} f_t)_t^{\ast}\;,
  \end{eqnarray*}
  which follows from the decomposition formula of the Hessian
  (\ref{cx-dec-Hess}). Using lemmas \ref{Endo-Div}, \ref{Kah-crv} and the
  equality (\ref{Kah-D}) we expand the therm
\begin{eqnarray*}
       \left( \tmop{div}^{^{_{_{\Omega}}}}_{g_t} \mathcal{D}_{g_t} 
       \dot{g}''_t \right)^{\ast}_t & = & - \;\,\frac{1}{2}\,
       \nabla^{\ast_{_{\Omega}}}_{_{T_X, g_t}} \partial^{g_t}_{_{T_X, J_t}}
       A_t \;\,-\;\, \frac{1}{2}\,  \left( \nabla^{\ast_{_{\Omega}}}_{_{T_X, g_t}}
       \partial^{g_t}_{_{T_X, J_t}} A_t \right)_t^T \;\,+\;\,
       \Delta^{^{_{_{\Omega}}}}_{g_t} A_t\\
       &  & \\
       & = & - \;\,\partial^{\ast_{g_t, \Omega}}_{_{T_{X, J_t}}} \partial^{g_t
       }_{_{T_{X, J_t}}} A_t \;\,+\;\, \Delta_{g_t} A_t \;\,+\;\, \nabla_{g_t} f_t \;\neg\;
       \nabla_{g_t} A_t\\
       &  & \\
       & - & \frac{1}{2}\,  \bar{\partial}^{\ast_{g_t, \Omega}}_{_{T_{X,
       J_t}}} \partial^{g_t}_{_{T_X, J_t}} A_t \;\,-\;\, \frac{1}{2}  \left(
       \bar{\partial}^{\ast_{g_t, \Omega}}_{_{T_{X, J_t}}}
       \partial^{g_t}_{_{T_X, J_t}} A_t \right)_t^T\;,
\end{eqnarray*}
  thanks to formula (\ref{sm-Addeldel-A}). Using the identity (\ref{weitz-A})
  we obtain
\begin{eqnarray*} 
       \left( \tmop{div}^{^{_{_{\Omega}}}}_{g_t} \mathcal{D}_{g_t} 
       \dot{g}''_t \right)^{\ast}_t & = & -\;\, A_t\, \tmop{Ric}_{g_t}^{\ast} \;\,-\;\,
       \tmop{Ric}_{g_t}^{\ast} \,A_t\\
       &  & \\
       & - & \nabla_{g_t} f_t \;\neg\, \left( \nabla^{1, 0}_{g_t, J_t} A_t \;\,-\;\,
       \nabla^{0, 1}_{g_t, J_t} A_t \right)\\
       &  & \\
       & - & \frac{1}{2}  \,\bar{\partial}^{\ast_{g_t, \Omega}}_{_{T_{X,
       J_t}}} \partial^{g_t}_{_{T_X, J_t}} A_t \;\,-\;\, \frac{1}{2}  \left(
       \bar{\partial}^{\ast_{g_t, \Omega}}_{_{T_{X, J_t}}}
       \partial^{g_t}_{_{T_X, J_t}} A_t \right)_t^T\;,
\end{eqnarray*}     
  Using formula (\ref{com-del-AddbarOm}) we deduce
  \begin{eqnarray*}
    \left( \tmop{div}^{^{_{_{\Omega}}}}_{g_t} \mathcal{D}_{g_t}  \dot{g}''_t
    \right)^{\ast}_t & = & -\;\, A_t \tmop{Ric}_{g_t}^{\ast} \;\,-\;\,
    \tmop{Ric}_{g_t}^{\ast} A_t\\
    &  & \\
    & - & \nabla_{g_t} f_t \;\neg\; \left( \nabla^{1, 0}_{g_t, J_t} A_t \;\,-\;\,
    \nabla^{0, 1}_{g_t, J_t} A_t \right)\\
    &  & \\
    & + & \partial^{g_t}_{_{T_X, J_t}} \bar{\partial}^{\ast_{g_t,
    \Omega}}_{_{T_{X, J_t}}} A_t \;\,+\;\, \left( \partial^{g_t}_{_{T_X, J_t}}
    \bar{\partial}^{\ast_{g_t, \Omega}}_{_{T_{X, J_t}}} A_t \right)_t^T
    \\
    &  & \\
    & - & A_t \,\bar{\partial}_{_{T_{X, J_t}}} \nabla_{g_t} f_t \;\,-\;\,
    \bar{\partial}_{_{T_{X, J_t}}} \nabla_{g_t} f_t \,A_t \;.
  \end{eqnarray*}
  Then the variation formula (\ref{var-EndOm-RcCx}) follows by plunging this
  expression in the previous formula for the variation of
  $\tmop{Ric}^{\ast}_{_{J_t}} (\Omega)$ and rearranging the leading therms. We
  show now formula (\ref{var-II-cmpx}). By corollary \ref{vr-ant-Hess} we
  infer the expression
  \begin{eqnarray*}
    2 \,\frac{d}{dt}  \left( \bar{\partial}_{_{T_{X, J_t}}} \nabla_{g_t}
    f_t  \right) & = & - \;\,2\, \dot{g}_t^{\ast}\, \bar{\partial}_{_{T_{X,
    J_t}}} \nabla_{g_t} f_t \;\,+\;\, \left( \tmop{div}^{^{_{_{\Omega}}}}_{g_t}
    \mathcal{D}_{g_t}  \dot{g}'_t \right)^{\ast}_t \\
    &  & \\
    & - & \nabla_{g_t} f_t \;\neg\; \left( \nabla^{1, 0}_{g_t, J_t} \,A_t \;\,-\;\,
    \nabla^{0, 1}_{g_t, J_t} \,A_t \right)\\
    &  & \\
    & - & \bar{\partial}_{_{T_{X, J_t}}} \nabla_{g_t} f_t \,A_t \;\,-\;\, A_t\, 
    \bar{\partial}_{_{T_{X, J_t}}} \nabla_{g_t} f_t\\
    &  & \\
    & + & \partial^{g_t}_{_{T_X, J_t}}\nabla_{g_t} f_t \,A_t \;\,+\;\, A_t\,
    \partial^{g_t}_{_{T_X, J_t}} \nabla_{g_t} f_t \;.
  \end{eqnarray*}
  Using again lemmas \ref{Endo-Div}, \ref{Kah-crv} and the equality
  (\ref{Kah-D}) we expand the therm
  \begin{eqnarray*}
    \left( \tmop{div}^{^{_{_{\Omega}}}}_{g_t} \mathcal{D}_{g_t}  \dot{g}'_t
    \right)^{\ast}_t & = & \frac{1}{2}\, \nabla^{\ast_{_{\Omega}}}_{_{T_X, g_t}}
    \bar{\partial}_{_{T_{X, J_t}}} B_t \;\,+\;\, \frac{1}{2}  \left(
    \nabla^{\ast_{_{\Omega}}}_{_{T_X, g_t}} \bar{\partial}_{_{T_{X,
    J_t}}} B_t \right)_t^T \;\,-\;\, \Delta^{^{_{_{\Omega}}}}_{g_t} B_t\\
    &  & \\
    & = & \frac{1}{2}\, \partial^{\ast_{g_t, \Omega}}_{_{T_{X, J_t}}}
    \bar{\partial}_{_{T_{X, J_t}}} B_t \;\,+\;\, \frac{1}{2}  \left(
    \partial^{\ast_{g_t, \Omega}}_{_{T_{X, J_t}}} \bar{\partial}_{_{T_{X,
    J_t}}} B_t \right)_t^T\\
    &  & \\
    & + & \bar{\partial}^{\ast_{g_t, \Omega}}_{_{T_{X, J_t}}}
    \bar{\partial}_{_{T_{X, J_t}}} B_t \;\,-\;\, \nabla_{g_t} f_t \;\neg\; \nabla^{0,
    1}_{g_t, J_t} B_t \;\,+\;\, \nabla_{g_t} f_t \;\neg\; \nabla^{1, 0}_{g_t, J_t} B_t\\
    &  & \\
    & - & B_t \tmop{Ric}^{\ast}_{g_t} \;\,+\;\, \tmop{Ric}^{\ast}_{g_t} B_t \;\,-\;\,
    \Delta_{g_t} B_t \;\,-\;\, \nabla_{g_t} f_t \;\neg\; \nabla_{g_t} B_t\;,
  \end{eqnarray*}
  thanks to the identity (\ref{sm-Addbardbar}). Moreover simplifying and using
  the formula (\ref{weitz-B}) we obtain the identity
  \begin{equation}
    \label{cx-weitz-B}  \left( \tmop{div}^{^{_{_{\Omega}}}}_{g_t}
    \mathcal{D}_{g_t}  \dot{g}'_t \right)^{\ast}_t \;\;=\;\; \frac{1}{2}\,
    \partial^{\ast_{g_t, \Omega}}_{_{T_{X, J_t}}} \bar{\partial}_{_{T_{X,
    J_t}}} B_t \;\,+\;\, \frac{1}{2}  \left( \partial^{\ast_{g_t, \Omega}}_{_{T_{X,
    J_t}}} \bar{\partial}_{_{T_{X, J_t}}} B_t \right)_t^T \;.
  \end{equation}
  Applying the commutation formula (\ref{com-dbar-AddelOm}) to the the
  identity (\ref{cx-weitz-B}) we infer
  \begin{eqnarray*}
    \left( \tmop{div}^{^{_{_{\Omega}}}}_{g_t} \mathcal{D}_{g_t}  \dot{g}'_t
    \right)^{\ast}_t & = & - \;\,\bar{\partial}_{_{T_{X, J_t}}}
    \partial^{\ast_{g_t, \Omega}}_{_{T_{X, J_t}}} B_t \;\,-\;\, \left(
    \bar{\partial}_{_{T_{X, J_t}}} \partial^{\ast_{g_t, \Omega}}_{_{T_{X,
    J_t}}} B_t \right)_t^T\\
    &  & \\
    & + & B_t \,\bar{\partial}_{_{T_{X, J_t}}} \nabla_{g_t} f_t \;\,+\;\,
    \bar{\partial}_{_{T_{X, J_t}}} \nabla_{g_t} f_t \,B_t\; .
  \end{eqnarray*}
  Then the variation formula (\ref{var-II-cmpx}) follows by plunging this
  expression in the previous expression of the variation of
  $\bar{\partial}_{_{T_{X, J_t}}}\nabla_{g_t} f_t$.
\end{proof}

We notice that one can deduce also the variation formula
\begin{eqnarray*}
  2\, \frac{d}{dt} \tmop{Ric}^{\ast}_{_{J_t}} (\Omega)_{g_t} & = & - \;\,\frac{1}{2}\,
  \bar{\partial}^{\ast_{g_t, \Omega}}_{_{T_{X, J_t}}}
  \partial^{g_t}_{_{T_X, J_t}} A_t \;\,-\;\, \frac{1}{2}  \left(
  \bar{\partial}^{\ast_{g_t, \Omega}}_{_{T_{X, J_t}}}
  \partial^{g_t}_{_{T_X, J_t}} A_t \right)_t^T\\
  &  & \\
  & + & A_t  \,\bar{\partial}_{_{T_{X, J_t}}} \nabla_{g_t} f_t \;\,+\;\,
  \bar{\partial}_{_{T_{X, J_t}}} \nabla_{g_t} f_t \,A_t\\
  &  & \\
  & - & \left[ \tmop{Ric}^{\ast}_{_{J_t}} (\Omega)_{g_t}, A_t \right] \;\,-\;\, 2\, B_t\,
  \tmop{Ric}^{\ast}_{_{J_t}} (\Omega)_{g_t} \;.
\end{eqnarray*}

\section{Representation of the Soliton-K\"ahler-Ricci flow as a complex
strictly parabolic system}

In this section we will show that the Soliton-K\"ahler-Ricci flow (introduced
in \cite{Pal3}) generated by the Soliton-Ricci flow (introduced in \cite{Pal2}) represents
 a strictly parabolic system of the complex components of the metric
variation. Let
\begin{eqnarray*}
  \mathbbm{F}_g \;\; \assign \;\; \left \{ v \in C^{\infty} \left( X,
  S_{_{\mathbbm{R}}}^2 T^{\ast}_X \right) \mid \hspace{0.25em} \nabla_{_{T_X,
  g}} v_g^{\ast} = 0 \right\} \;.
\end{eqnarray*}
We remind (see \cite{Pal2}) that the Soliton-Ricci flow is a strictly
parabolic equation. Indeed we observe that the variation formula
(\ref{var-Om-Ric}) implies directly that if $(g_t)_{t \in \mathbbm{R}} \subset
\mathcal{M}$ is a smooth family such that $\dot{g}_t \in \mathbbm{F}_{g_t}$
for all $t \in \mathbbm{R}$, then
\begin{eqnarray*}
  2 \,\frac{d}{d t} \tmop{Ric}_{g_t} (\Omega) \;\; = \;\; -\;\,
  \Delta^{^{_{_{\Omega}}}}_{g_t}  \dot{g}_t \;.
\end{eqnarray*}
(see also \cite{Pal2} for a different proof of this formula). We obtain in
particular the variation formula
\begin{equation}
  \label{vr-Om-EndRc} 2 \,\frac{d}{d t} \tmop{Ric}^{\ast}_{g_t} (\Omega) \;\;=\;\; -\;\,
  \Delta^{^{_{_{\Omega}}}}_{g_t}  \dot{g}^{\ast}_t \;\,-\;\, 2\, \dot{g}_t^{\ast}
  \tmop{Ric}^{\ast}_{g_t} (\Omega) \;.
\end{equation}
In \cite{Pal1} we show that for any K\"ahler structure $(J, g)$ hold the identity
\begin{equation}
  \label{Kah-F} \mathbbm{F}_g \;\;=\;\; \Big\{ v \in \mathbbm{D}^J_g \mid \hspace{0.25em} 
  \bar{\partial}_{_{T_{X, J}}} (v_g^{\ast})_{_J}^{1, 0} \;\;=\;\; -\;\,
  \partial^g_{_{T_{X, J}}} (v_g^{\ast})_{_J}^{0, 1}  \Big\}\; .
\end{equation}
It was also observed in \cite{Pal1} that comparing the previous $(1, 1)$-forms
by means of $g$-geodesic and $J$-holomorphic coordinates we can infer the
identity
\begin{equation}
  \label{sup-Kh-sm} \mathbbm{F}_g \;\;=\;\; \Big\{ v \in \mathbbm{D}^J_g \mid
  \hspace{0.25em} \xi \;\neg\; \nabla_{g, J}^{0, 1} \,(v_g^{\ast})_{_J}^{1, 0} \;\;=\;\;
  \nabla_{g, J}^{1, 0} \,(v_g^{\ast})_{_J}^{0, 1} \cdot \xi\;, \;\;\forall \;\xi \;\in\; T_X
  \Big\}\; .
\end{equation}
In order to see that the Soliton-K\"ahler-Ricci flow is a complex strictly
parabolic system we need to obtain first variation formulas for the complex
components of the $\Omega$-Bakry-Emery-Ricci endomorphism with respect to
$\mathbbm{F}_g$-valued variations of the metric. This formulas can
be obtained with little effort from the general variation formulas
(\ref{var-EndOm-RcCx}), (\ref{var-II-cmpx}) by means of the identity
(\ref{Kah-F}). However we prefer to show this particular variation formulas in
an independent way which avoids large part of the deep and heavy computations
needed to proof formulas (\ref{var-EndOm-RcCx}) and (\ref{var-II-cmpx}).

We start by showing the following quite elementary fact.

\begin{lemma}
  Let $(J, g) \in \mathcal{K}\mathcal{S}$ and let $v \in \mathbbm{F}_g$. Then
  hold the commutation identities
  \begin{eqnarray}
    \label{dadd} \partial^g_{_{T_{X, J}}} \partial^{\ast_g}_{_{T_{X, J}}}
    (v_g^{\ast})_{_J}^{1, 0} &=& \frac{1}{2} \,
    \bar{\partial}^{\ast_g}_{_{T_{X, J}}} \bar{\partial}_{_{T_{X,
    J}}} (v_g^{\ast})_{_J}^{1, 0}\;,
\\\nonumber
\\
    \label{bar-adbar}  \bar{\partial}_{_{T_{X, J}}}
    \bar{\partial}^{\ast_g}_{_{T_{X, J}}} (v_g^{\ast})_{_J}^{0, 1} &=&
    \frac{1}{2} \,\partial^{\ast_g}_{_{T_{X, J}}} \partial^g_{_{T_{X, J}}}
    (v_g^{\ast})_{_J}^{0, 1}\;.
  \end{eqnarray}
\end{lemma}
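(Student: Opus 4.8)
The plan is to set $A \assign (v_g^{\ast})_{_J}^{0, 1}$ and $B \assign (v_g^{\ast})_{_J}^{1, 0}$ and to reduce both commutation identities to one elementary fact about the corresponding first order operators. Since $\mathbbm{F}_g \subset \mathbbm{D}^J_g$, the characterisations (\ref{Kah-D}) and (\ref{Kah-F}) of these spaces say that the covariant exterior derivative of $v_g^{\ast}$ vanishes, which, decomposed into types, means
$$
\partial^g_{_{T_{X, J}}} B \;\;=\;\; 0 \;,\qquad \bar{\partial}_{_{T_{X, J}}} A \;\;=\;\; 0 \;,\qquad \bar{\partial}_{_{T_{X, J}}} B \;\;=\;\; -\,\partial^g_{_{T_{X, J}}} A \;.
$$
The first two identities express the membership $v \in \mathbbm{D}^J_g$, and the third is the additional $(1,1)$-type condition in (\ref{Kah-F}).

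The key elementary fact is that, for $v \in \mathbbm{F}_g$, the two sections
$$
\partial^{\ast_g}_{_{T_{X, J}}} B \;\;=\;\; \bar{\partial}^{\ast_g}_{_{T_{X, J}}} A
$$
of $T_{X, J}$ coincide. This follows at once by taking the $g$-trace over $\xi$ in the symmetric form (\ref{sup-Kh-sm}) of the membership $v \in \mathbbm{F}_g$, namely $\xi \;\neg\; \nabla_{g, J}^{0, 1} B = \nabla_{g, J}^{1, 0} A \cdot \xi$ for all $\xi \in T_X$: by the local expressions (\ref{expr-adDel}), (\ref{expr-adDbar}) for the adjoint operators one has $\partial^{\ast_g}_{_{T_{X, J}}} B = -\,\tmop{Tr}_g \nabla_{g, J}^{0, 1} B$ and $\bar{\partial}^{\ast_g}_{_{T_{X, J}}} A = -\,\tmop{Tr}_g \nabla_{g, J}^{1, 0} A$ in degree $1$, so contracting the two sides of (\ref{sup-Kh-sm}) with $g$ gives the claim.

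Granting this, identity (\ref{dadd}) is obtained as follows. Using the $(1,1)$-relation $\bar{\partial}_{_{T_{X, J}}} B = -\,\partial^g_{_{T_{X, J}}} A$ and applying the K\"ahler commutation formula (\ref{comut-del-addelb}) to the degree $1$ form $A$,
$$
\tfrac{1}{2}\, \bar{\partial}^{\ast_g}_{_{T_{X, J}}} \bar{\partial}_{_{T_{X, J}}} B \;\;=\;\; -\,\tfrac{1}{2}\, \bar{\partial}^{\ast_g}_{_{T_{X, J}}} \partial^g_{_{T_{X, J}}} A \;\;=\;\; \partial^g_{_{T_{X, J}}} \bar{\partial}^{\ast_g}_{_{T_{X, J}}} A \;,
$$
so (\ref{dadd}) is equivalent to $\partial^g_{_{T_{X, J}}}\big( \partial^{\ast_g}_{_{T_{X, J}}} B - \bar{\partial}^{\ast_g}_{_{T_{X, J}}} A \big) = 0$, which holds by the key fact. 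Symmetrically, using $\partial^g_{_{T_{X, J}}} A = -\,\bar{\partial}_{_{T_{X, J}}} B$ and applying (\ref{comut-bar-addel}) to the degree $1$ form $B$,
$$
\tfrac{1}{2}\, \partial^{\ast_g}_{_{T_{X, J}}} \partial^g_{_{T_{X, J}}} A \;\;=\;\; -\,\tfrac{1}{2}\, \partial^{\ast_g}_{_{T_{X, J}}} \bar{\partial}_{_{T_{X, J}}} B \;\;=\;\; \bar{\partial}_{_{T_{X, J}}} \partial^{\ast_g}_{_{T_{X, J}}} B \;,
$$
so (\ref{bar-adbar}) reduces to $\bar{\partial}_{_{T_{X, J}}}\big( \partial^{\ast_g}_{_{T_{X, J}}} B - \bar{\partial}^{\ast_g}_{_{T_{X, J}}} A \big) = 0$, again immediate from the key fact.

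The only point needing care is the key fact $\partial^{\ast_g}_{_{T_{X, J}}} B = \bar{\partial}^{\ast_g}_{_{T_{X, J}}} A$, and there the difficulty is purely bookkeeping: the degree-dependent normalisation of our adjoint operators and the factor $2$ relating $\tmop{Tr}_g$ to the corresponding complex trace must be handled consistently, exactly as in the proof of lemma \ref{Cx-Weitz}. One could instead prove (\ref{dadd}) and (\ref{bar-adbar}) by brute force, expanding both sides in $g$-geodesic $J$-holomorphic coordinates and inserting the three symmetry relations above; but then the K\"ahler curvature terms produced when commuting covariant derivatives would have to be shown to cancel, which is precisely what the route through the commutation identities avoids.
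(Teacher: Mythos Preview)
Your proof is correct and follows essentially the same route as the paper: both arguments hinge on the key identity $\partial^{\ast_g}_{_{T_{X, J}}} B = \bar{\partial}^{\ast_g}_{_{T_{X, J}}} A$ and then combine it with the $(1,1)$-relation $\bar{\partial}_{_{T_{X, J}}} B = -\,\partial^g_{_{T_{X, J}}} A$ and the commutation formulas (\ref{comut-del-addelb}), (\ref{comut-bar-addel}) in exactly the way you describe. The only cosmetic difference is in how the key identity is obtained: the paper applies the standard K\"ahler identities $\partial^{\ast_g}_{_{T_{X, J}}} = J[\omega^{\ast}\;\neg\;\bar{\partial}_{_{T_{X, J}}}\cdot]$ and $\bar{\partial}^{\ast_g}_{_{T_{X, J}}} = -\,J[\omega^{\ast}\;\neg\;\partial^g_{_{T_{X, J}}}\cdot]$ directly to the $(1,1)$-relation, whereas you trace the pointwise symmetry (\ref{sup-Kh-sm}) against $g$ using (\ref{expr-adDel}), (\ref{expr-adDbar}); these are two equally short ways of reading off the same equality.
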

\begin{proof}
  We observe first that using the identity (\ref{Kah-F}) and the standard
  K\"ahler identities we infer
  \begin{eqnarray*}
    \partial^{\ast_g}_{_{T_{X, J}}} (v_g^{\ast})_{_J}^{1, 0} & = & J\Big[
    \omega^{\ast}\; \neg\; \bar{\partial}_{_{T_{X, J}}} (v_g^{\ast})_{_J}^{1,
    0}\Big]\\
    &  & \\
    & = & - \;\,J\Big[ \omega^{\ast}\; \neg\; \partial^g_{_{T_{X, J}}}
    (v_g^{\ast})_{_J}^{0, 1}\Big]\\
    &  & \\
    & = & \bar{\partial}^{\ast_g}_{_{T_{X, J}}} (v_g^{\ast})_{_J}^{0, 1}\;.
  \end{eqnarray*}
  Thus
  \begin{eqnarray*}
    \partial^g_{_{T_{X, J}}} \partial^{\ast_g}_{_{T_{X, J}}}
    (v_g^{\ast})_{_J}^{1, 0} & = & \partial^g_{_{T_{X, J}}}
    \bar{\partial}^{\ast_g}_{_{T_{X, J}}} (v_g^{\ast})_{_J}^{0, 1}\\
    &  & \\
    & = & - \;\,\frac{1}{2}\,  \bar{\partial}^{\ast_g}_{_{T_{X, J}}}
    \partial^g_{_{T_{X, J}}} (v_g^{\ast})_{_J}^{0, 1}\\
    &  & \\
    & = & \frac{1}{2} \, \bar{\partial}^{\ast_g}_{_{T_{X, J}}}
    \bar{\partial}_{_{T_{X, J}}} (v_g^{\ast})_{_J}^{1, 0}\;,
  \end{eqnarray*}
  thanks to the identity (\ref{comut-del-addelb}). This concludes the proof of
  the identity (\ref{dadd}). 
Moreover as before we infer the equalities
  \begin{eqnarray*}
    \bar{\partial}_{_{T_{X, J}}} \bar{\partial}^{\ast_g}_{_{T_{X,
    J}}} (v_g^{\ast})_{_J}^{0, 1} & = & \bar{\partial}_{_{T_{X, J}}}
    \partial^{\ast_g}_{_{T_{X, J}}} (v_g^{\ast})_{_J}^{1, 0}\\
    &  & \\
    & = & - \;\,\frac{1}{2}\, \partial^{\ast_g}_{_{T_{X, J}}}
    \bar{\partial}_{_{T_{X, J}}} (v_g^{\ast})_{_J}^{1, 0}\\
    &  & \\
    & = & \frac{1}{2} \,\partial^{\ast_g}_{_{T_{X, J}}} \partial^g_{_{T_{X,
    J}}} (v_g^{\ast})_{_J}^{0, 1},
  \end{eqnarray*}
  thanks to the identity (\ref{comut-bar-addel}) This concludes the proof of
  the identity (\ref{bar-adbar}). 
\end{proof}

We show now the following particular first variation formula for the
$J$-anti-linear part of the Hessian.

\begin{lemma}
  \label{vr-Hes}Let $(g_t, J_t)_t \subset
  \mathcal{K}\mathcal{S} $ be a smooth path such that $\dot{J}_t = (
  \dot{J}_t)_{g_t}^T$ and $\dot{g}_t \in \mathbbm{F}_{g_t}$. Let also $f_t
  \assign \log \frac{d V_{g_t}}{\Omega}$, with $\Omega > 0$ a smooth volume
  form. Then hold the following first variation identity for the
  $J_t$-anti-linear part of the endomorphism $\nabla^2_{g_t} f_t $ 
  \begin{eqnarray}
    2 \,\frac{d}{dt}  \left( \bar{\partial}_{_{T_{X, J_t}}}\nabla_{g_t}
    f_t  \right) & = & -\;\, 2\, \bar{\partial}_{_{T_{X, J_t}}}
    \bar{\partial}^{\ast_{g_t}}_{_{T_{X, J_t}}} \dot{g}_t^{0, 1} \;\,-\;\,
    \nabla_{g_t} f_t \;\neg\; \nabla_{g_t}  \dot{g}_t^{0, 1}\nonumber
\\\nonumber
    &  & \\
    & - & \dot{g}_t^{0, 1}\, \partial^{g_t}_{_{T_X, J_t}}\nabla_{g_t} f_t \;\,-\;\,
    \partial^{g_t}_{_{T_X, J_t}}\nabla_{g_t} f_t\,  \dot{g}_t^{0, 1}\nonumber
\\\nonumber
    &  & \\
    & + & \left[ \bar{\partial}_{_{T_{X, J_t}}}\nabla_{g_t} f_t\,,
    \dot{g}_t^{0, 1} \right] \;\,-\;\, 2\, \dot{g}_t^{1, 0}\, \bar{\partial}_{_{T_{X,
    J_t}}} \nabla_{g_t} f_t \;.\label{var-II-cmpx-F}
  \end{eqnarray}
with $\dot{g}_t^{1, 0} \assign ( \dot{g}_t^{\ast})_{_{J_t}}^{1, 0}$ and
  $\dot{g}_t^{0, 1} \assign ( \dot{g}_t^{\ast})_{_{J_t}}^{0, 1}$.
\end{lemma}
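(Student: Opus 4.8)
The plan is to deduce the identity from Corollary \ref{vr-ant-Hess} by passing to the endomorphism picture, and then to exploit the extra hypothesis $\dot{g}_t\in\mathbbm{F}_{g_t}$ --- through the elementary commutation identities (\ref{dadd})--(\ref{bar-adbar}) --- so as to avoid the long Weitzenb\"ock/symmetry reductions used for (\ref{var-II-cmpx}). Put $\Phi_t\assign\bar{\partial}_{_{T_{X,J_t}}}\nabla_{g_t}f_t$ and $B_t\assign\dot{g}_t^{1,0}$. Applying $g_t^{-1}$ to the identity of Corollary \ref{vr-ant-Hess} and using $\frac{d}{dt}(g_t\Phi_t)=\dot{g}_t\Phi_t+g_t\dot{\Phi}_t$ gives
\[
2\,\frac{d}{dt}\Phi_t\;=\;\Big(\tmop{div}^{^{_{_{\Omega}}}}_{g_t}\mathcal{D}_{g_t}\dot{g}'_t\Big)^{\ast}_t\;-\;2\,\dot{g}_t^{\ast}\Phi_t\;-\;\Big(J_t\nabla_{g_t}f_t\;\neg\;\nabla_{g_t}\dot{J}_t\;+\;\dot{J}_t\nabla^2_{g_t}f_t\,J_t\;-\;J_t\nabla^2_{g_t}f_t\,\dot{J}_t\Big)\,,
\]
so it remains to expand the divergence term and to rewrite the last bracket in terms of $\dot{g}_t^{0,1}$.

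For the divergence term I would apply Lemma \ref{Endo-Div} to $u=\dot{g}'_t$, for which $u^{\ast}_{g_t}=B_t$. Since $\dot{g}_t\in\mathbbm{F}_{g_t}\subset\mathbbm{D}^{J_t}_{g_t}$, the characterization (\ref{Kah-D}) gives $\partial^{g_t}_{_{T_{X,J_t}}}B_t=0$, hence $\nabla_{_{T_X,g_t}}B_t=\bar{\partial}_{_{T_{X,J_t}}}B_t$, and processing $\Delta^{^{_{_{\Omega}}}}_{g_t}B_t$ through (\ref{weitz-B}), the definition of $\Delta^{^{_{_{\Omega}}}}_{g_t}$, the weighted--adjoint relations and (\ref{sm-Addbardbar}) --- exactly as in the derivation of (\ref{cx-weitz-B}) in the proof of the Theorem, where all curvature and first order terms cancel --- one gets $(\tmop{div}^{^{_{_{\Omega}}}}_{g_t}\mathcal{D}_{g_t}\dot{g}'_t)^{\ast}_t=\tfrac12\partial^{\ast_{g_t,\Omega}}_{_{T_{X,J_t}}}\bar{\partial}_{_{T_{X,J_t}}}B_t+\tfrac12(\partial^{\ast_{g_t,\Omega}}_{_{T_{X,J_t}}}\bar{\partial}_{_{T_{X,J_t}}}B_t)^{T}_{g_t}$. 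Here the $\mathbbm{F}_{g_t}$--hypothesis enters a second time: by (\ref{Kah-F}) one has $\bar{\partial}_{_{T_{X,J_t}}}B_t=-\partial^{g_t}_{_{T_{X,J_t}}}\dot{g}_t^{0,1}$, and combining the weighted--adjoint expansion $\partial^{\ast_{g_t,\Omega}}_{_{T_{X,J_t}}}\partial^{g_t}_{_{T_{X,J_t}}}\dot{g}_t^{0,1}=\partial^{\ast_{g_t}}_{_{T_{X,J_t}}}\partial^{g_t}_{_{T_{X,J_t}}}\dot{g}_t^{0,1}+2\,\nabla_{g_t}f_t\;\neg\;\nabla^{1,0}_{g_t,J_t}\dot{g}_t^{0,1}$ (from the proof of the symmetry lemma) with the elementary commutation (\ref{bar-adbar}), i.e. $\partial^{\ast_{g_t}}_{_{T_{X,J_t}}}\partial^{g_t}_{_{T_{X,J_t}}}\dot{g}_t^{0,1}=2\,\bar{\partial}_{_{T_{X,J_t}}}\bar{\partial}^{\ast_{g_t}}_{_{T_{X,J_t}}}\dot{g}_t^{0,1}$, I obtain $\tfrac12\partial^{\ast_{g_t,\Omega}}_{_{T_{X,J_t}}}\bar{\partial}_{_{T_{X,J_t}}}B_t=-\,\bar{\partial}_{_{T_{X,J_t}}}\bar{\partial}^{\ast_{g_t}}_{_{T_{X,J_t}}}\dot{g}_t^{0,1}-\nabla_{g_t}f_t\;\neg\;\nabla^{1,0}_{g_t,J_t}\dot{g}_t^{0,1}$. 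By the symmetry relations (\ref{cx-sm-A1}) and (\ref{sm-Addeldel-A}) both terms on the right are $g_t$--symmetric, so the transposition is harmless and $(\tmop{div}^{^{_{_{\Omega}}}}_{g_t}\mathcal{D}_{g_t}\dot{g}'_t)^{\ast}_t=-2\,\bar{\partial}_{_{T_{X,J_t}}}\bar{\partial}^{\ast_{g_t}}_{_{T_{X,J_t}}}\dot{g}_t^{0,1}-2\,\nabla_{g_t}f_t\;\neg\;\nabla^{1,0}_{g_t,J_t}\dot{g}_t^{0,1}$.

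For the last bracket I would use the identity $(\dot{g}^{\ast}_t)^{0,1}_{J_t}=-\,J_t\dot{J}_t$ (identity (15) of \cite{Pal1}), i.e. $\dot{J}_t=J_t\dot{g}_t^{0,1}$, and $\nabla_{g_t}J_t=0$ to get $\nabla_{g_t}\dot{J}_t=J_t\nabla_{g_t}\dot{g}_t^{0,1}$; splitting $\nabla_{g_t}=\nabla^{1,0}_{g_t,J_t}+\nabla^{0,1}_{g_t,J_t}$ then yields $J_t\nabla_{g_t}f_t\;\neg\;\nabla_{g_t}\dot{J}_t=\nabla_{g_t}f_t\;\neg\;\nabla_{g_t}\dot{g}_t^{0,1}-2\,\nabla_{g_t}f_t\;\neg\;\nabla^{1,0}_{g_t,J_t}\dot{g}_t^{0,1}$. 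Moreover, decomposing $\nabla^2_{g_t}f_t=\partial^{g_t}_{_{T_X,J_t}}\nabla_{g_t}f_t+\Phi_t$ as in (\ref{cx-dec-Hess}), where the first summand commutes and the second anti-commutes with $J_t$, and using $\dot{J}_t=J_t\dot{g}_t^{0,1}$ together with $\dot{g}_t^{0,1}J_t=-J_t\dot{g}_t^{0,1}$, a direct bilinear computation gives $\dot{J}_t\nabla^2_{g_t}f_t\,J_t-J_t\nabla^2_{g_t}f_t\,\dot{J}_t=\dot{g}_t^{0,1}\,\partial^{g_t}_{_{T_X,J_t}}\nabla_{g_t}f_t+\partial^{g_t}_{_{T_X,J_t}}\nabla_{g_t}f_t\,\dot{g}_t^{0,1}-\dot{g}_t^{0,1}\Phi_t-\Phi_t\dot{g}_t^{0,1}$. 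Substituting these three expansions into the formula of the first paragraph and writing $-2\,\dot{g}_t^{\ast}\Phi_t=-2\,\dot{g}_t^{1,0}\Phi_t-2\,\dot{g}_t^{0,1}\Phi_t$, the two $\nabla^{1,0}_{g_t,J_t}\dot{g}_t^{0,1}$--terms cancel, the terms mixing $\Phi_t$ and $\dot{g}_t^{0,1}$ assemble into $[\Phi_t,\dot{g}_t^{0,1}]$, and what remains is exactly (\ref{var-II-cmpx-F}).

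The step I expect to be the real obstacle is the reduction of the divergence term: one must check that after inserting (\ref{weitz-B}) and the weighted--adjoint expansions all the curvature terms together with the $\nabla_{g_t}f_t\;\neg\;\nabla_{g_t}B_t$ contributions cancel, leaving only the symmetrized $\tfrac12\partial^{\ast_{g_t,\Omega}}_{_{T_{X,J_t}}}\bar{\partial}_{_{T_{X,J_t}}}B_t$, which is where $\partial^{g_t}_{_{T_{X,J_t}}}B_t=0$ is essential; and then to pass to $\dot{g}_t^{0,1}$ via (\ref{Kah-F}) and (\ref{bar-adbar}) \emph{before} taking transposes, so that the $g_t$--symmetry furnished by (\ref{cx-sm-A1}) and (\ref{sm-Addeldel-A}) applies. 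Everything else is routine linear algebra with $J_t$, $\dot{g}_t^{0,1}$ and the Hessian decomposition.
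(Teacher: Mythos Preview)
Your argument is correct: starting from Corollary~\ref{vr-ant-Hess}, reducing the divergence term via Lemma~\ref{Endo-Div}, (\ref{weitz-B}) and (\ref{sm-Addbardbar}) to the symmetrized expression~(\ref{cx-weitz-B}), then converting to $\dot{g}_t^{0,1}$ through (\ref{Kah-F}) and the commutation (\ref{bar-adbar}), and finally handling the $\dot{J}_t$--bracket by the bilinear algebra you describe, does produce (\ref{var-II-cmpx-F}). The cancellation of the $\nabla^{1,0}_{g_t,J_t}\dot{g}_t^{0,1}$ terms and the assembly into $[\Phi_t,\dot{g}_t^{0,1}]$ are exactly as you say.

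However, your route is quite different from the paper's, and in a sense works against the stated purpose of the lemma. The paper explicitly introduces this lemma as an \emph{independent} proof that ``avoids large part of the deep and heavy computations'' behind (\ref{var-EndOm-RcCx}) and (\ref{var-II-cmpx}). Accordingly, the paper's proof does \emph{not} invoke Corollary~\ref{vr-ant-Hess} (which rests on the long local computation of Corollary~\ref{rm-vr-Rc-fm}), nor Lemma~\ref{Endo-Div}, nor any Weitzenb\"ock or symmetry identity. Instead it proceeds directly: it applies the elementary variation formula (\ref{var-dbar-vflds}) to $\bar{\partial}_{_{T_{X,J_t}}}\nabla_{g_t}f_t$, time--differentiates the gradient to get $\tfrac{d}{dt}\nabla_{g_t}f_t=\nabla_{g_t}\dot{f}_t-\dot{g}_t^{\ast}\nabla_{g_t}f_t$ with $2\dot{f}_t=\tmop{Tr}_{g_t}\dot{g}_t$, expands $\bar{\partial}_{_{T_{X,J_t}}}(\dot{g}_t^{\ast}\nabla_{g_t}f_t)$ using the $\mathbbm{F}_{g_t}$--symmetry (\ref{sup-Kh-sm}) together with (\ref{Tg-cx-str}) to obtain $\nabla^{0,1}_{g,J}\dot{g}_t^{\ast}\cdot\xi=\xi\,\neg\,\nabla_{g_t}\dot{g}_t^{0,1}$, uses $\dot{g}_t\in\mathbbm{F}_{g_t}$ once more via $\nabla_{g_t}\tmop{Tr}_{g_t}\dot{g}_t=-\nabla^{\ast}_{g_t}\dot{g}_t^{\ast}$, and finishes with the decomposition $\bar{\partial}_{_{T_{X,J_t}}}\nabla^{\ast}_{g_t}\dot{g}_t^{\ast}=2\,\bar{\partial}_{_{T_{X,J_t}}}\bar{\partial}^{\ast_{g_t}}_{_{T_{X,J_t}}}\dot{g}_t^{0,1}$. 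What your approach buys is a mechanical reduction once the heavy machinery is in place; what the paper's approach buys is a genuinely self--contained argument that makes the lemma logically prior to, rather than a consequence of, the Weitzenb\"ock and divergence formulas.
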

\begin{proof}
  Using the variation formula (\ref{var-dbar-vflds}) we infer the identity
  \begin{eqnarray*}
    2\, \frac{d}{dt}  \left( \bar{\partial}_{_{T_{X, J_t}}} \nabla_{g_t}
    f_t \right) & = & -\;\, \nabla_{g_t} f_t \;\neg\; J_t \nabla_{g_t}  \dot{J}_t \;\,+\;\,
    \dot{J}_t \nabla^2_{g_t} f_t \, J_t \;\, +\;\,
     J_t \nabla^2_{g_t} f_t \,  \dot{J}_t \\
    &  & \\
    & + & 2\, \bar{\partial}_{_{T_{X, J_t}}} \left( \frac{d}{dt}\,
    \nabla_{g_t} f_t \right) \;.
  \end{eqnarray*}
  Time deriving the definition of the gradient we obtain
$$
 d \dot{f}_t \;\;=\;\; \left( \frac{d}{dt}
     \,\nabla_{g_t} f_t \right) \;\neg\; g_t \;\,+\;\,\nabla_{g_t} f_t \;\neg\; \dot{g}_t \;, 
$$
  thus
$$
 \frac{d}{dt} \hspace{0.25em} \nabla_{g_t} f_t \;\;=\;\;
 \nabla_{g_t}  \dot{f}_t \;\,-\;\, 
     \dot{g}_t^{\ast} \, \nabla_{g_t} f_t \;. 
$$
  Combining this with the equalities $2 \,\dot{f}_t \; =
 \; \tmop{Tr}_{g_t}  \dot{g}_t$ and $\dot{g}_t^{0, 1} \;=\; -\; J_t
  \dot{J}_t$ we infer
  \begin{eqnarray*}
    2\, \frac{d}{dt}  \left( \bar{\partial}_{_{T_{X, J_t}}}\nabla_{g_t}
    f_t  \right) & = & \bar{\partial}_{_{T_{X, J_t}}} \nabla_{g_t}
    \tmop{Tr}_{g_t} \dot{g_t} 
\\
\\
&-& 2\, \bar{\partial}_{_{T_{X, J_t}}} \left(
    \dot{g}_t^{\ast} \, \nabla_{g_t} f_t \right) \;\,+\;\, \nabla_{g_t} f_t \;\neg\;
    \nabla_{g_t}  \dot{g}_t^{0, 1}\\
    &  & \\
    & - & \partial^{g_t}_{_{T_X, J_t}} \nabla_{g_t} f_t  \;\dot{g}_t^{0, 1} \;\,+\;\,
    \bar{\partial}_{_{T_{X, J_t}}} \nabla_{g_t} f_t \; \dot{g}_t^{0, 1} \\
    &  & \\
    & + & \dot{g}_t^{0, 1} \,\partial^{g_t}_{_{T_X, J_t}} \nabla_{g_t} f_t \;\,-\;\,
    \dot{g}_t^{0, 1}\, \bar{\partial}_{_{T_{X, J_t}}} \nabla_{g_t} f_t \;.
  \end{eqnarray*}
  Then
  \begin{eqnarray*}
    2 \,\bar{\partial}_{_{T_{X, J_t}}} \left(  \dot{g}_t^{\ast}\,
    \nabla_{g_t} f_t  \right) & = & \nabla_{g_t}  \left( \dot{g}_t^{\ast}
    \, \nabla_{g_t} f_t \right) \;\,+\;\, J_t \nabla_{g_t, J_t \cdot}  \left(
    \dot{g}_t^{\ast} \, \nabla_{g_t} f_t \right)\\
    &  & \\
    & = & \nabla_{g_t}  \dot{g}_t^{\ast} \, \nabla_{g_t} f_t \;\,+\;\,
    \dot{g}_t^{\ast} \,\nabla^2_{g_t} f_t \\
    &  & \\
    & + & J_t \nabla_{g_t, J_t \cdot}  \,\dot{g}_t^{\ast} \, \nabla_{g_t}
    f_t \;\,+\;\, J_t \, \dot{g}_t^{\ast} \nabla_{g_t, J_t \cdot} \nabla_{g_t} f_t\\
    &  & \\
    & = & 2\, \nabla_{g, J}^{0, 1}  \dot{g}_t^{\ast} \, \nabla_{g_t} f_t \;\,+\;\,
    \dot{g}_t^{1, 0} \nabla^2_{g_t} f_t \;\,+\;\, \dot{g}_t^{0, 1} \,\nabla^2_{g_t} f_t
    \\
    &  & \\
    & + & \dot{g}_t^{1, 0} J_t \nabla_{g_t, J_t \cdot} \nabla_{g_t} f_t \;\,-\;\,
    \dot{g}_t^{0, 1} J_t \nabla_{g_t, J_t \cdot} \nabla_{g_t} f_t\\
    &  & \\
    & = & 2 \,\nabla_{g_t} f_t \neg \nabla_{g_t}  \dot{g}_t^{0, 1} 
\\
\\
&+& 2\,
    \dot{g}_t^{1, 0}\, \bar{\partial}_{_{T_{X, J_t}}} \nabla_{g_t} f_t \;\,+\;\, 2\,
    \dot{g}_t^{0, 1} \,\partial^{g_t}_{_{T_X, J_t}} \nabla_{g_t} f_t\;,
  \end{eqnarray*}
  since for any vector $\xi \in T_X$ hold the identity 
$$
\nabla_{g, J}^{0, 1} 
  \,\dot{g}_t^{\ast} \cdot \xi \;\;=\;\; \xi \;\neg\; \nabla_{g_t} \, \dot{g}_t^{0, 1}\;.
$$
In
  fact decomposing and using the symmetries (\ref{sup-Kh-sm}),
  (\ref{Tg-cx-str}) we infer
  \begin{eqnarray*}
    \nabla_{g, J}^{0, 1} \, \dot{g}_t^{\ast} \cdot \xi & = & \nabla_{g, J}^{0,
    1} \, \dot{g}_t^{1, 0} \cdot \xi \;\,+\;\, \nabla_{g, J}^{0, 1} \, \dot{g}_t^{0, 1}
    \cdot \xi\\
    &  & \\
    & = & \xi \;\neg\; \nabla_{g, J}^{1, 0} \, \dot{g}_t^{0, 1} \;\,+\;\, \xi \;\neg\;
    \nabla_{g, J}^{0, 1} \, \dot{g}_t^{0, 1}\\
    &  & \\
    & = & \xi \;\neg\; \nabla_{g_t} \, \dot{g}_t^{0, 1} \;.
  \end{eqnarray*}
  We remind now (see the proof of lemma 3 in \cite{Pal2}) that the assumption
  $\dot{g}_t \in \mathbbm{F}_{g_t}$ implies the identity $\nabla_{g_t}
  \tmop{Tr}_{g_t} \dot{g_t} \;=\; -\; \nabla^{\ast}_{g_t}  \,\dot{g}_t^{\ast}$.
  Combining the previous formulas we infer the identity
  \begin{eqnarray*}
    2\, \frac{d}{dt}  \left( \bar{\partial}_{_{T_{X, J_t}}} \nabla_{g_t}
    f_t  \right) & = & - \;\,\bar{\partial}_{_{T_{X, J_t}}}
    \nabla_{g_t}^{\ast}  \,\dot{g}^{\ast}_t \;\,-\;\, \nabla_{g_t} f_t \;\neg\; \nabla_{g_t}\,
    \dot{g}_t^{0, 1} 
\\
\\
&+& \left[ \bar{\partial}_{_{T_{X, J_t}}}
    \nabla_{g_t} f_t\,, \dot{g}_t^{0, 1} \right]\\
    &  & \\
    & - & \dot{g}_t^{0, 1}\; \partial^{g_t}_{_{T_X, J_t}} \nabla_{g_t} f_t \;\,-\;\,
    \partial^{g_t}_{_{T_X, J_t}} \nabla_{g_t} f_t  \;\dot{g}_t^{0, 1} 
\\
\\
&-& 2\,
    \dot{g}_t^{1, 0} \;\bar{\partial}_{_{T_{X, J_t}}} \nabla_{g_t} f_t \;.
  \end{eqnarray*}
  The conclusion follows from the identities
  \begin{eqnarray*}
    \bar{\partial}_{_{T_{X, J_t}}} \nabla_{g_t}^{\ast}  \,\dot{g}^{\ast}_t
    & = &  \bar{\partial}_{_{T_{X, J_t}}} \partial^{\ast_{g_t}}_{_{T_{X,
    J_t}}} \dot{g}_t^{1, 0} \;\,+\;\, \bar{\partial}_{_{T_{X, J_t}}}
    \bar{\partial}^{\ast_g}_{_{T_{X, J}}}  \dot{g}_t^{0, 1}\\
    &  & \\
    & = & 2 \,\bar{\partial}_{_{T_{X, J_t}}}
    \bar{\partial}^{\ast_{g_t}}_{_{T_{X, J_t}}}  \dot{g}_t^{0, 1} \;.
  \end{eqnarray*}
\end{proof}
\begin{corollary}
  Under the assumptions of lemma \ref{vr-Hes} hold the variation formula
  \begin{eqnarray}
    2\, \frac{d}{dt} \tmop{Ric}^{\ast}_{_{J_t}} (\Omega)_{g_t} & = & -\;\, 2\,
    \partial^{g_t }_{_{T_{X, J_t}}} \partial^{\ast_{g_t}}_{_{T_{X, J_t}}} 
    \dot{g}_t^{1, 0} \;\,-\;\, \nabla_{g_t} f_t \;\neg\; \nabla_{g_t}  \,\dot{g}_t^{1, 0}\nonumber
\\\nonumber
    &  & \\
    & - & \dot{g}_t^{0, 1}\, \bar{\partial}_{_{T_{X, J_t}}} \nabla_{g_t}
    f_t \;\,-\;\, \bar{\partial}_{_{T_{X, J_t}}} \nabla_{g_t} f_t  \;\dot{g}_t^{0,
    1} \;\,-\;\, \left[ \tmop{Ric}^{\ast}_{g_t}, \dot{g}_t^{1, 0} \right]\nonumber
\\\nonumber
    &  & \\
    & + & \left[ \tmop{Ric}^{\ast}_{_{J_t}} (\Omega)_{g_t}, \dot{g}_t^{0, 1}
    \right] \;\,-\;\, 2\, \dot{g}_t^{1, 0} \tmop{Ric}^{\ast}_{_{J_t}} (\Omega)_{g_t} \;.\label{var-EndOm-RcCx-F}
  \end{eqnarray}
\end{corollary}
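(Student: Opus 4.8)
The plan is to differentiate the decomposition identity (\ref{dec-end-Ric}), rewritten in the form
$$\tmop{Ric}^{\ast}_{_{J_t}}(\Omega)_{g_t} \;\;=\;\; \tmop{Ric}^{\ast}_{g_t}(\Omega) \;\,-\;\, \bar{\partial}_{_{T_{X, J_t}}} \nabla_{g_t} f_t\;,$$
and then to feed into the two pieces the formulas already available under the present hypotheses. For the first piece we use the variation formula (\ref{vr-Om-EndRc}), which applies because $\dot{g}_t \in \mathbbm{F}_{g_t}$, namely $2\,\frac{d}{dt}\tmop{Ric}^{\ast}_{g_t}(\Omega) = -\,\Delta^{^{_{_{\Omega}}}}_{g_t}\dot{g}^{\ast}_t - 2\,\dot{g}^{\ast}_t\tmop{Ric}^{\ast}_{g_t}(\Omega)$; for the second piece we use the first variation formula (\ref{var-II-cmpx-F}) of lemma \ref{vr-Hes}. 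Adding these, writing $\dot{g}^{\ast}_t = \dot{g}^{1,0}_t + \dot{g}^{0,1}_t$ and $\Delta^{^{_{_{\Omega}}}}_{g_t} = \Delta_{g_t} + \nabla_{g_t} f_t \;\neg\; \nabla_{g_t}$, reduces the statement to a bookkeeping problem in the $g_t$-symmetric endomorphisms $A_t \assign -\,\dot{g}^{0,1}_t = J_t\dot{J}_t \in C^{\infty}(X, T^{\ast}_{X,-J_t}\otimes_{_{\C}} T_{X, J_t})$ and $B_t \assign \dot{g}^{1,0}_t \in C^{\infty}(X, T^{\ast}_{X,J_t}\otimes_{_{\C}} T_{X, J_t})$.

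Next I would dispose of the second order terms. The rough Laplacians $\Delta_{g_t}A_t$ and $\Delta_{g_t}B_t$ are rewritten via the complex Weitzenb\"ock formulas (\ref{weitz-A}), (\ref{weitz-B}) of lemma \ref{Cx-Weitz}, after which the complex operators are exchanged using the commutation identities (\ref{dadd}), (\ref{bar-adbar}), applicable since $\dot{g}_t \in \mathbbm{F}_{g_t}$: explicitly $\bar{\partial}^{\ast_{g_t}}_{_{T_{X,J_t}}}\bar{\partial}_{_{T_{X,J_t}}} B_t = 2\,\partial^{g_t}_{_{T_X,J_t}}\partial^{\ast_{g_t}}_{_{T_X,J_t}} B_t$ and $\bar{\partial}_{_{T_{X,J_t}}}\bar{\partial}^{\ast_{g_t}}_{_{T_{X,J_t}}} A_t = \tfrac12\,\partial^{\ast_{g_t}}_{_{T_X,J_t}}\partial^{g_t}_{_{T_X,J_t}} A_t$. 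The effect is that the $\dot{g}^{1,0}_t$-block of $-\Delta_{g_t}\dot{g}^{\ast}_t$ becomes exactly $-2\,\partial^{g_t}_{_{T_X,J_t}}\partial^{\ast_{g_t}}_{_{T_X,J_t}}\dot{g}^{1,0}_t - [\tmop{Ric}^{\ast}_{g_t}, \dot{g}^{1,0}_t]$, while the $\dot{g}^{0,1}_t$-block of $-\Delta_{g_t}\dot{g}^{\ast}_t$ cancels against the term $2\,\bar{\partial}_{_{T_{X,J_t}}}\bar{\partial}^{\ast_{g_t}}_{_{T_{X,J_t}}}\dot{g}^{0,1}_t$ coming from (\ref{var-II-cmpx-F}), leaving only the curvature expression $-\,A_t\tmop{Ric}^{\ast}_{g_t} - \tmop{Ric}^{\ast}_{g_t}A_t$. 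Likewise, the first order term $\nabla_{g_t}f_t \;\neg\; \nabla_{g_t}\dot{g}^{0,1}_t$ of (\ref{var-II-cmpx-F}) is absorbed by the $\dot{g}^{0,1}_t$-part of $-\,\nabla_{g_t}f_t \;\neg\; \nabla_{g_t}\dot{g}^{\ast}_t$, leaving the target term $-\,\nabla_{g_t}f_t \;\neg\; \nabla_{g_t}\dot{g}^{1,0}_t$.

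What remains is the zeroth order bookkeeping, and this is where care is needed. I would substitute the two identities $\tmop{Ric}^{\ast}_{g_t}(\Omega) = \tmop{Ric}^{\ast}_{_{J_t}}(\Omega)_{g_t} + \bar{\partial}_{_{T_{X,J_t}}}\nabla_{g_t}f_t$ (which is (\ref{dec-end-Ric})) and $\tmop{Ric}^{\ast}_{g_t} = \tmop{Ric}^{\ast}_{_{J_t}}(\Omega)_{g_t} - \partial^{g_t}_{_{T_X,J_t}}\nabla_{g_t}f_t$ (obtained by combining (\ref{dec-end-Ric}) with the Hessian decomposition (\ref{cx-dec-Hess}) and the identity $\partial^{g_t}_{_{T_X,J_t}}\nabla_{g_t}f_t = (i\,\partial_{_{J_t}}\bar{\partial}_{_{J_t}}f_t)^{\ast}_t$), and then recall $A_t = -\,\dot{g}^{0,1}_t$, $B_t = \dot{g}^{1,0}_t$. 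After this substitution one checks: the $\partial^{g_t}_{_{T_X,J_t}}\nabla_{g_t}f_t$-terms produced by $-A_t\tmop{Ric}^{\ast}_{g_t}-\tmop{Ric}^{\ast}_{g_t}A_t$ cancel the two $\dot{g}^{0,1}_t\,\partial^{g_t}_{_{T_X,J_t}}\nabla_{g_t}f_t$-terms in (\ref{var-II-cmpx-F}); the various $\bar{\partial}_{_{T_{X,J_t}}}\nabla_{g_t}f_t$-contributions (from $-2\dot{g}^{\ast}_t\tmop{Ric}^{\ast}_{g_t}(\Omega)$, from (\ref{var-II-cmpx-F}), and from $-2\dot{g}^{1,0}_t\tmop{Ric}^{\ast}_{g_t}(\Omega)$) combine to give the target $-\,\dot{g}^{0,1}_t\bar{\partial}_{_{T_{X,J_t}}}\nabla_{g_t}f_t - \bar{\partial}_{_{T_{X,J_t}}}\nabla_{g_t}f_t\,\dot{g}^{0,1}_t$; and the remaining $\tmop{Ric}^{\ast}_{_{J_t}}(\Omega)_{g_t}$-terms reassemble into $[\tmop{Ric}^{\ast}_{_{J_t}}(\Omega)_{g_t}, \dot{g}^{0,1}_t] - 2\,\dot{g}^{1,0}_t\tmop{Ric}^{\ast}_{_{J_t}}(\Omega)_{g_t}$. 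Collecting all surviving terms yields precisely (\ref{var-EndOm-RcCx-F}). There is no isolated hard step: the obstacle is purely organizational — keeping the sign conventions straight through $A_t = -\dot{g}^{0,1}_t$, verifying that the lower order contributions coming from three independent sources (the connection part of $\Delta^{^{_{_{\Omega}}}}_{g_t}$, the Ricci terms of the Weitzenb\"ock formulas, and the $\nabla_{g_t}f_t$-terms of (\ref{var-II-cmpx-F})) cancel or recombine exactly as claimed, and noting that it is the $g_t$-adjoint, not the $\Omega$-adjoint, that appears here, which is exactly what makes the commutation identities (\ref{dadd})--(\ref{bar-adbar}) directly applicable.
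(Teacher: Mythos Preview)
Your proposal is correct and follows essentially the same approach as the paper: differentiate the decomposition (\ref{dec-end-Ric}), feed in (\ref{vr-Om-EndRc}) and (\ref{var-II-cmpx-F}), then rewrite the rough Laplacians via the Weitzenb\"ock identities (\ref{weitz-A}), (\ref{weitz-B}) combined with the commutation relations (\ref{dadd}), (\ref{bar-adbar}). The only cosmetic difference is the order of operations---the paper first repackages (\ref{var-II-cmpx-F}) into a $\Delta^{^{_{_{\Omega}}}}_{g_t}\dot g^{0,1}_t$ form before subtracting, whereas you subtract first and then apply Weitzenb\"ock to both blocks simultaneously---but the ingredients and the resulting cancellations are identical.
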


\begin{proof}
  Combining the commutation identity (\ref{bar-adbar}) with the formula
  (\ref{weitz-A}) in the variation formula (\ref{var-II-cmpx}) we obtain
  \begin{eqnarray*}
    2\, \frac{d}{dt}  \left( \bar{\partial}_{_{T_{X, J_t}}} \nabla_{g_t}
    f_t  \right) & = & -\;\, \Delta^{^{_{_{\Omega}}}}_{g_t} \, \dot{g}_t^{0, 1} \;\,-\;\,
    \dot{g}_t^{0, 1} \tmop{Ric}^{\ast}_{_{J_t}} (\Omega)_{g_t} \;\,-\;\,
    \tmop{Ric}^{\ast}_{_{J_t}} (\Omega)_{g_t}  \,\dot{g}_t^{0, 1} \\
    &  & \\
    & + & \left[ \bar{\partial}_{_{T_{X, J_t}}} \nabla_{g_t} f_t\,,
    \dot{g}_t^{0, 1} \right] \;\,-\;\, 2\, \dot{g}_t^{1, 0}\, \bar{\partial}_{_{T_{X,
    J_t}}} \nabla_{g_t} f_t \;.
  \end{eqnarray*}
  We remind in fact the identities \ $\tmop{Ric}^{\ast}_{g_t} =
  \tmop{Ric}^{\ast}_{_{J_t}} (\omega_t)_{g_t}$ and
  \begin{eqnarray*}
    \partial^{g_t}_{_{T_X, J_t}} \nabla_{g_t} f_t \;\; = \;\; (i\, \partial_{_{J_t}}
    \bar{\partial}_{_{J_t}} f_t)_t^{\ast} \;.
  \end{eqnarray*}
  (This last hold thanks to the decomposition formula of the Hessian
  (\ref{cx-dec-Hess}).) On the other hand time deriving the complex
  decomposition formula (\ref{dec-end-Ric}) and using the variation formula
  (\ref{vr-Om-EndRc}) we obtain
  \begin{eqnarray*}
    2\, \frac{d}{dt} \tmop{Ric}^{\ast}_{_{J_t}} (\Omega)_{g_t} & = & -\;\,
    \Delta^{^{_{_{\Omega}}}}_{g_t}  \,\dot{g}^{\ast}_t \;\,-\;\, 2\, \dot{g}^{\ast}_t
    \tmop{Ric}^{\ast}_{g_t} (\Omega) \;\,-\;\, 2\, \frac{d}{dt}  \left(
    \bar{\partial}_{_{T_{X, J_t}}} \nabla_{g_t} f_t  \right)\\
    &  & \\
    & = & - \;\,\Delta^{^{_{_{\Omega}}}}_{g_t}\,  \dot{g}_t^{1, 0} \;\,+\;\, \left[
    \tmop{Ric}^{\ast}_{_{J_t}} (\Omega)_{g_t}\,, \dot{g}_t^{0, 1} \right] \\
    &  & \\
    & - & \dot{g}_t^{0, 1} \,\bar{\partial}_{_{T_{X, J_t}}} \nabla_{g_t}
    f_t \;\,-\;\, \bar{\partial}_{_{T_{X, J_t}}} \nabla_{g_t} f_t  \;\dot{g}_t^{0,
    1} 
\\
\\
&-& 2\, \dot{g}_t^{1, 0} \tmop{Ric}^{\ast}_{_{J_t}} (\Omega)_{g_t} \;.
  \end{eqnarray*}
  Then the conclusion follows combining formula (\ref{weitz-B}) with the
  identity (\ref{dadd}).
\end{proof}

We remind that with our conventions our adjoint operators
$$
 \nabla^{\ast}_{_{T_{X, g}}}\;,\quad \partial^{\ast_g}_{_{T_{X, J}}}\;,\quad
   \bar{\partial}^{\ast_g}_{_{T_{X, J}}}\;, 
$$
differ by the ones usually defined in the literature by a degree
multiplicative factor. The motivation for our convention is to preserve
functoriality between the Riemannian and K\"ahler geometry. We remind (see \cite{Pal2}) that with our conventions the Hodge Laplacian operator acting on
$T_X$-valued $q$-forms is defined as
\begin{eqnarray*}
  \Delta_{_{T_{X, g}}} & : = & \nabla_{_{T_{X, g}}} \nabla^{\ast}_g \;\,+\;\,
  \nabla_g^{\ast} \,\nabla_{_{T_{X, g}}} \\
  &  & \\
  & = & \frac{1}{q}\, \nabla_{_{T_{X, g}}} \nabla^{\ast}_{_{T_{X, g}}} \;\,+\;\,
  \frac{1}{q + 1} \,\nabla^{\ast}_{_{T_{X, g}}} \nabla_{_{T_{X, g}}} \;.
\end{eqnarray*}
We define also the holomorphic and antiholomorphic Hodge Laplacian operators
acting on $T_X$-valued $q$-forms as
\begin{eqnarray*}
  \Delta'_{_{T_{X, g, J}}} & \assign & \frac{1}{q}\; \partial^g_{_{T_{X, J}}}
  \partial^{\ast_g}_{_{T_{X, J}}} \;\,+\;\, \frac{1}{q + 1}\; \partial^{\ast_g}_{_{T_{X,
  J}}} \partial^g_{_{T_{X, J}}}\;,\\
  &  & \\
  \Delta''_{_{T_{X, g, J}}} & \assign & \frac{1}{q} \;
  \bar{\partial}_{_{T_{X, J}}} \bar{\partial}^{\ast_g}_{_{T_{X, J}}}
  \;\,+\;\, \frac{1}{q + 1}\;  \bar{\partial}^{\ast_g}_{_{T_{X, J}}}
  \bar{\partial}_{_{T_{X, J}}}\;,
\end{eqnarray*}
with the usual convention $\infty \cdot 0 = 0.$ This complex Hodge Laplacian operators
coincide with the standard ones used in the literature. We remind that in the
K\"ahler case hold the decomposition identity
\begin{eqnarray*}
  \Delta_{_{T_{X, g}}} \;\; = \;\; \Delta'_{_{T_{X, g, J}}} \;\,+\;\, \Delta''_{_{T_{X, g,
  J}}} \;.
\end{eqnarray*}
Let $(J_t, g_t)_{t \geqslant 0}$ be a solution of the $\Omega$-Soliton-K\"ahler-Ricci
flow (see \cite{Pal2}) generated by the $\Omega$-Soliton-Ricci flow (see \cite{Pal2}). We set
as usual $A_t \;\assign\; -\; \dot{g}_t^{0, 1} \;=\; J_t \dot{J}_t$ and $B_t \;\assign\;
\dot{g}_t^{1, 0}$ and we observe the identity $\dot{\omega}_t^{\ast} \;=\; B_t$.
Indeed time differentiating the equality $\omega_t \;=\; g_t J_t$ we infer
\begin{eqnarray*}
  \dot{\omega}_t \;\; = \;\; \dot{g}_t\, J_t \;\,+\;\, g_t\,  \dot{J}_t \;\;=\;\; \dot{g}_t\, J_t \;\,+\;\,
  \omega_t \, \dot{g}_t^{0, 1} \;.
\end{eqnarray*}
Multiplying booth sides with $\omega^{- 1} \;=\; -\; J\, g^{- 1}$ we obtain
$$
\dot{\omega}_t^{\ast} \;\;=\;\; -\;\, J_t\,  \dot{g}^{\ast}_t J_t \;\,+\;\, \dot{g}_t^{0, 1} \;\,=\;\,
\dot{g}_t^{1, 0}\;.
$$
With the previous notations the equation of the $\Omega$-Soliton-K\"ahler-Ricci
flow generated by the $\Omega$-Soliton-Ricci flow rewrites as
\begin{equation}
  \label{SKRF-eq}  \left\{ \begin{array}{l}
    B_t \;\;=\;\;  \tmop{Ric}^{\ast}_{_{J_t}} (\Omega)_{g_t}
     \;\,-\;\,\mathbbm{I}_{T_X}  \;,\\
    \\
    A_t \;\;=\;\;  \bar{\partial}_{_{T_{X, J_t}}} \nabla_{g_t} f_t
    \;,\\
    \\
    \bar{\partial}_{_{T_{X, J_t}}} B \;\;=\;\; \partial_{_{T_{X, J_t}}} A \;.
  \end{array} \right.
\end{equation}
The last equation in the system follows from the equality (\ref{Kah-F}). \
Time differentiating the first two equations of the system (\ref{SKRF-eq}) by
means of the first variation formulas (\ref{var-EndOm-RcCx-F}),
(\ref{var-II-cmpx-F}) and using lemma \ref{Kah-crv} with the equality
(\ref{Kah-D}) we obtain the parabolic type evolution formulas
\begin{eqnarray}
  2\, \dot{B}_t & = & - \;\,2\, \Delta'_{_{T_{X, g_t, J_t}}} B_t \;\,-\;\, \nabla_{g_t} f_t
  \;\neg\; \nabla_{g_t} B_t\nonumber
\\\nonumber
  &  & \\
  & - & 2 \,B^2_t \;\,+\;\, 2\, A^2_t \;\,+\;\, \left[ B, A \right] \;\,-\;\, \left[
  \tmop{Ric}^{\ast}_{g_t}, B \right] \;\,-\;\, 2\, B\;,\label{evol-B}
\\\nonumber
  &  & \\
  2 \,\dot{A}_t & = & - \;\,2\, \Delta''_{_{T_{X, g_t, J_t}}} A_t \;\,-\;\, \nabla_{g_t} f_t
  \;\neg\; \nabla_{g_t} A_t\nonumber
\\\nonumber
  &  & \\
  & + & A_t \,\partial^{g_t}_{_{T_X, J_t}} \nabla_{g_t} f_t \;\,+\;\,
  \partial^{g_t}_{_{T_X, J_t}} \nabla_{g_t} f_t \,A_t \;\,-\;\, 2\, B_t\, A_t\;,\label{evol-A}
\end{eqnarray}
which show the required conclusion.

\section{Appendix}

\subsection{Canonical connections}

Over any almost complex manifold $(X, J)$ there exist a canonical connection
of type $(0, 1)$ over the vector bundle $\Lambda^p_{_{\C}} T^{\ast}_{_{X,
J}}$,
\[ \bar{\partial}_{_{J, p}} : C^{\infty} \left( \Lambda^p_{_{\C}}
   T^{\ast}_{X, J} \right) \longrightarrow C^{\infty} \left( \Lambda^{0,
   1}_{_J} T^{\ast}_X \otimes_{_{\C}} \Lambda^p_{_{\C}} T^{\ast}_{X, J}
   \right) \hspace{0.25em}, \]
given by the formula
$$ 
\bar{\partial}_{_{J, p}} \alpha \hspace{0.25em} (\eta) 
  \;\; : =\;\;  \eta^{0, 1}_{_J} \;\neg\; \bar{\partial}_{_J} \alpha
   \;,
$$
for all $\alpha \in C^{\infty} (X, \Lambda^p_{_{\C}} T^{\ast}_{X, J})$ and
$\eta \in C^{\infty} (X, T_X \otimes_{_{\R}} \C)$. More explicitly
\begin{eqnarray}
  \bar{\partial}_{_{J, p}} \alpha \hspace{0.25em} (\eta) \,(\xi_1, ...,
  \xi_p) & = & \eta^{0, 1} . \hspace{0.25em} \alpha (\xi_1, ..., \xi_p)\nonumber
\\\nonumber
  &  & \\
  & + & \sum_{l = 1}^p (- 1)^l \,\alpha ([\eta^{0, 1}, \xi_l^{1, 0}]^{1, 0},
  \xi_1, ..., \hat{\xi}_l, ..., \xi_p) \;,\label{def-dbar-p}
\end{eqnarray}
for all $\xi_j \in C^{\infty} (X, T_X)$. In order to show that
$\bar{\partial}_{_{J, p}}$ is a connection of type $(0, 1)$ we observe
that for any $f \in C^{\infty} (X, \C)$ hold the identities
\begin{eqnarray*}
  \bar{\partial}_{_{J, p}} (f \alpha) \hspace{0.25em} (\eta) & = &
  \eta^{0, 1} \;\neg\; \bar{\partial}_{_J} (f \alpha)\\
  &  & \\
  & = & \eta^{0, 1} \;\neg\; \left( \bar{\partial}_{_J} f \wedge \alpha
  \;\,+ \;\, f \hspace{0.25em} \bar{\partial}_{_J}
  \alpha \right)\\
  &  & \\
  & = & \bar{\partial}_{_J} f \hspace{0.25em} (\eta) \cdot \alpha
  \;\, - \;\, \bar{\partial}_{_J} f \wedge
  (\eta^{0, 1} \;\neg\; \alpha) \;\, + \;\, f
  \hspace{0.25em} \bar{\partial}_{_J} \alpha \hspace{0.25em} (\eta)\\
  &  & \\
  & = & \left( \bar{\partial}_{_J} f \otimes \alpha \;\, +\;\,
   f \hspace{0.25em} \bar{\partial}_{_J} \alpha \right)
  (\eta) \;,
\end{eqnarray*}
since $\alpha$ is of type $(p, 0)$. In a similar way there exists a canonical
connection of type $(1, 0)$ over the vector bundle $\Lambda^p_{_{\C}}
T^{\ast}_{X, - J}$,
$$
 \partial_{_{J, p}} : C^{\infty} \left( \Lambda^p_{_{\C}} T^{\ast}_{X, - J}
   \right) \longrightarrow C^{\infty} (\Lambda^{1, 0}_{_J} T^{\ast}_X
   \otimes_{_{\C}} \Lambda^p_{_{\C}} T^{\ast}_{X, - J}) \;, 
$$
given by the formula
$$
 \partial_{_{J, p}} \alpha \hspace{0.25em} (\eta) \;\; : =
   \;\; \eta^{1, 0}_{_J} \;\neg\; \partial_{_J} \alpha \;,
$$
for all $\alpha \in C^{\infty} (X, \Lambda^p_{_{\C}} T^{\ast}_{X, - J})$ and
$\eta \in C^{\infty} (X, T_X \otimes_{_{\R}} \C)$. More explicitly
\begin{eqnarray}
  \partial_{_{J, p}} \alpha \hspace{0.25em} (\eta) \,(\xi_1, ..., \xi_p) & = &
  \eta^{1, 0} . \hspace{0.25em} \alpha (\xi_1, ..., \xi_p)\nonumber
\\\nonumber
  &  & \\
  & + & \sum_{l = 1}^p (- 1)^l\, \alpha ([\eta^{1, 0}, \xi_l^{0, 1}]^{0, 1},
  \xi_1, ..., \hat{\xi}_l, ..., \xi_p) \;,\label{def-del-p}
\end{eqnarray}
for all $\xi_j \in C^{\infty} (X, T_X)$. We observe in particular the identity
$\bar{\partial}_{_{J, p}} \alpha = \overline{\partial_{_{J, p}}
\bar{\alpha}}$. For $p = 1$ the connection $\bar{\partial}_{_{J, 1}}$
writes as
$$
 \bar{\partial}_{_{J, 1}} \alpha (\eta) \cdot \xi \;\; =\;\;
    \eta \hspace{0.25em} . \hspace{0.25em} \alpha (\xi)
   \;\, -\;\, \alpha ([\eta, \xi]^{1, 0})
   \;, 
$$
for all $\eta \in C^{\infty} (X, T^{0, 1}_{X, J})$ and $\xi \in C^{\infty} (X,
T^{1, 0}_{X, J})$. We infer that its dual connection
\[ \bar{\partial}_{_{T^{1, 0}_{X, J}}} : C^{\infty} \left( T^{1, 0}_{X,
   J} \right) \longrightarrow C^{\infty} \left( \Lambda^{0, 1}_{_J} T^{\ast}_X
   \otimes_{_{\C}} T^{1, 0}_{X, J} \right) \hspace{0.25em}, \]
over the bundle $T^{1, 0}_{X, J}$, which is defined by the formula
$$
 ( \bar{\partial}_{_{J, 1}} \alpha) \cdot \xi \hspace{0.75em} =
   \hspace{0.75em} \bar{\partial}_{_J} (\alpha \cdot \xi) 
   \;\,-\;\, \alpha \cdot \bar{\partial}_{_{T^{1, 0}_{X, J}}} \xi
   \;, 
$$
satisfies the identity
$$ \bar{\partial}_{_{T^{1, 0}_{X, J}}} \xi \,(\eta) \;\; =\;\;
    [\eta, \xi]^{1, 0} \; . 
$$
Moreover the canonical $\C$-isomorphism between $T^{1, 0}_{X, J}$ and $T_{X,
J}$ induces a canonical connection
$$
 \bar{\partial}_{_{T_{X, J}}} : C^{\infty} (T_{X, J}) \longrightarrow
   C^{\infty} (T^{\ast}_{X, - J} \otimes_{_{\C}} T_{X, J}) \;, 
$$
of type $(0, 1)$ over the bundle $T_{X, J}$. Explicitly for any $\xi,
\hspace{0.25em} \eta \in C^{\infty} (X, T_X)$,
\begin{eqnarray*}
  \bar{\partial}_{_{T_{X, J}}} \xi (\eta) & : = &
  \bar{\partial}_{_{T^{1, 0}_{X, J}}} \xi^{1, 0} (\eta) \;\, +
  \;\, \overline{\bar{\partial}_{_{T^{1, 0}_{X, J}}} \xi^{1,
  0} (\eta)}\\
  &  & \\
  & = & [\eta^{0, 1} \xi^{1, 0}]^{1, 0} \;\, + \;\,
  [\eta^{1, 0}, \xi^{0, 1}]^{0, 1} \; .
\end{eqnarray*}
Moreover the connection $\bar{\partial}_{_{T_{X, J}}}$ induces in a
natural way a connection
$$
 \partial_{_{T_{X, - J}}} : C^{\infty} (T_{X, - J}) \longrightarrow
   C^{\infty} (T^{\ast}_{X, J} \otimes_{_{\C}} T_{X, - J}) \;, 
$$
of type $(1, 0)$ over the bundle $T_{X, - J}$ by the formula
$$
 \partial_{_{T_{X, - J}}} \xi \,(\eta) \;\; : = \;\;
   \bar{\partial}_{_{T_{X, J}}} \xi \,(\bar{\eta}) \;, 
$$
for all $\xi \in C^{\infty} (X, T_X)$ and $\eta \in T_X \otimes_{_{\R}} \C$.
On the other hand the identity (\ref{def-del-p}) writes for $p = 1$ as
$$
 \partial_{_{J, 1}} \alpha (\eta) \cdot \xi \;\; =
   \;\; \eta \hspace{0.25em} . \hspace{0.25em} \alpha (\xi)
   \;\, - \;\, \alpha ([\eta, \xi]^{0, 1})
   \;, 
$$
for all $\eta \in C^{\infty} (X, T^{1, 0}_{X, J})$ and $\xi \in C^{\infty} (X,
T^{0, 1}_{X, J})$. We infer that its dual connection
$$
 \partial_{_{T^{0, 1}_{X, J}}} : C^{\infty} \left( T^{0, 1}_{X, J} \right)
   \longrightarrow C^{\infty} \left( \Lambda^{1, 0}_{_J} T^{\ast}_X
   \otimes_{_{\C}} T^{0, 1}_{X, J} \right) \;, 
$$
over the bundle $T^{0, 1}_{X, J}$, which is defined by the formula
$$
 (\partial_{_{J, 1}} \alpha) \cdot \xi \;\;= \;\;
   \partial_{_J} (\alpha \cdot \xi) \;\,- \;\, \alpha
   \cdot \partial_{_{T^{0, 1}_{X, J}}} \xi \;, 
$$
satisfies the identity
$$
 \partial_{_{T^{0, 1}_{X, J}}} \xi \,(\eta) \;\;= \;\;
   [\eta, \xi]^{0, 1} \; . 
$$
We deduce that for all $\xi \in C^{\infty} (X, T_X)$ and $\eta \in T_X
\otimes_{_{\R}} \C$ hold the identity
\begin{eqnarray*}
  \bar{\partial}_{_{T_{X, J}}} \xi (\eta) & = &
  \bar{\partial}_{_{T^{1, 0}_{X, J}}} \xi^{1, 0} (\eta) \hspace{0.75em} +
  \hspace{0.75em} \overline{\bar{\partial}_{_{T^{1, 0}_{X, J}}} \xi^{1,
  0} (\eta)}\\
  &  & \\
  & = & \bar{\partial}_{_{T^{1, 0}_{X, J}}} \xi^{1, 0} (\eta)
  \;\,+ \;\,\partial_{_{T^{0, 1}_{X, J}}} \xi^{0, 1} (
  \bar{\eta}) \;.
\end{eqnarray*}
We conclude that the connection $\partial_{_{T_{X, - J}}}$ is the dual of the
connection $\partial_{_{J, 1}}$ over the bundle $T_{X, - J}$.

\subsection{The Levi-Civita connection of a K\"ahler metric}

For convenience the notation for the $\partial$ operator in this subsection
is slightly different from the one used in the paper. Let $(X, J, \omega)$ be
an almost hermitian manifold. Let $g : = \omega (\cdot, J \cdot)$ be the
induced Riemannian metric and $h : = g - i \omega$ be the induced hermitian
metric over $T_{X, J}$. The hermitian data determines two connections of type
$(1, 0)$;
\[ \partial_{_{T_{X, J}}}^{\omega} : = h^{- 1} \cdot \partial_{_{J, 1}} \cdot
   h \hspace{0.25em} : C^{\infty} (T_{X, J}) \longrightarrow C^{\infty}
   (\Lambda^{1, 0}_{_J} T^{\ast}_X \otimes_{_{\C}} T_{X, J}) \hspace{0.25em},
\]
and
\[ \partial_{_{T^{1, 0}_{X, J}}}^{\omega} : = \omega^{- 1} \cdot
   \partial_{_{J, 1}} \cdot \omega \hspace{0.25em} : C^{\infty} (T^{1, 0}_{X,
   J}) \longrightarrow C^{\infty} (\Lambda^{1, 0}_{_J} T^{\ast}_X
   \otimes_{_{\C}} T^{1, 0}_{X, J}) \hspace{0.25em}, \]
where $h$ and $\omega$ are considered as morphisms $h : T_{X, J} \rightarrow
T^{\ast}_{X, - J}$ and $\omega : T_X \rightarrow T^{\ast}_X$. The trivial
identities
\begin{eqnarray*}
 h (\xi, \eta) &= &h (\xi^{1, 0}, \eta) 
\\
\\
&=& h (\xi^{1, 0}, \eta^{0, 1}) 
\\
\\
& =&
    - \;\, 2\, i \, \omega (\xi^{1, 0},
   \eta^{0, 1}) 
\\
\\
& = & - \;\, 2\, i\, \omega (\xi^{1, 0}, \eta)\;, 
\end{eqnarray*}
for all $\xi\,, \, \eta \in C^{\infty} (X, T_X)$
imply
\begin{equation}
\label{key-contract} \xi \;\neg\; h \;=\; -\; 2\; i\, \xi^{1, 0} \;\neg\; \omega\;.
\end{equation}
This combined with the definition of $\partial_{_{T_{X, J}}}^{\omega}$ gives
\begin{eqnarray*}
  \mu \;\;\assign\;\; \partial_{_{T_{X, J}}}^{\omega} \xi \,(\eta) & \assign & h^{- 1}
  \left[ \eta \;\neg\; \partial_{_{J, 1}} (\xi \neg h) \right]\\
  &  & \\
  & = & h^{- 1} \left[ \eta^{1, 0} \;\neg\; \partial_{_J} (\xi \neg h) \right]\\
  &  & \\
  & = & - \;\,2\, h^{- 1} \left[ \eta^{1, 0} \;\neg\; i\, \partial_{_J} (\xi^{1, 0} \;\neg\;
  \omega) \right] \;.
\end{eqnarray*}
If we set
\begin{eqnarray*}
  \alpha \;\; \assign \;\; -\;\, 2 \left[ \eta^{1, 0} \;\neg\; i\, \partial_{_J} (\xi^{1, 0}
  \neg \omega) \right] \;\;=\;\; \mu \;\neg\; h\;,
\end{eqnarray*}
then the identity (\ref{key-contract}) applied to $\mu$ implies
\begin{eqnarray*}
  \mu^{1, 0} \;\; = \;\; \frac{i}{2}\, \omega^{- 1} \alpha \;\;=\;\; \omega^{- 1} \left[
  \eta^{1, 0} \;\neg\; \partial_{_J} (\xi^{1, 0} \;\neg\; \omega) \right] \;.
\end{eqnarray*}
Moreover by definition
\begin{eqnarray*}
  \partial_{_{T^{1, 0}_{X, J}}}^{\omega} \xi^{1, 0} \,(\eta) & : = & \omega^{-
  1} \left[ \eta \;\neg\; \partial_{_{J, 1}} (\xi^{1, 0} \;\neg\; \omega) \right]\\
  &  & \\
  & = & \omega^{- 1} \left[ \eta^{1, 0} \;\neg\; \partial_{_J} (\xi^{1, 0} \;\neg\;
  \omega) \right]\\
  &  & \\
  & = & - \;\, J\,g^{- 1} \left[ \eta^{1, 0} \;\neg\; i\, \partial_{_J}
  (\xi^{1, 0} \;\neg\; g) \right] \; .
\end{eqnarray*}
We infer
$$
 \partial_{_{T_{X, J}}}^{\omega} \xi (\eta) \;\; =\;\;
\partial_{_{T^{1, 0}_{X, J}}}^{\omega} \xi^{1, 0} \,(\eta)
   \;\,+ \;\, \overline{\partial_{_{T^{1, 0}_{X,
   J}}}^{\omega} \xi^{1, 0} \,(\eta)} \; . 
$$
In conclusion we obtain the formula
\begin{equation}
  \label{J-lin-Chern} \partial_{_{T_{X, J}}}^{\omega} \xi \,(\eta) \;\;=\;\; -\;\,
  J\,g^{- 1} \left[ \eta^{1, 0} \;\neg\; i\, \partial_{_J} (\xi^{1, 0}
  \;\neg\; g) \;\,- \;\, \eta^{0, 1} \;\neg\; i\,
  \bar{\partial}_{_J} (\xi^{0, 1} \;\neg\; g) \right] \;.
\end{equation}
The Chern connection
$$
 D_{_{T_{X, J}}}^{\omega} \;\;: =\;\; \partial_{_{T_{X, J}}}^{\omega} \;\,+\;\, \bar{\partial}_{_{T_{X, J}}}\;, 
$$
is $h$-hermitian. From now on we assume that $(X, J, \omega)$ is a K\"ahler
manifold, i.e. $\nabla_g \,J \;=\; 0$. In this case the Levi-Civita connection
$\nabla_g$ coincides with the Chern connection $D_{_{T_{X, J}}}^{\omega}$. We
consider also the components
\begin{eqnarray*}
  \nabla^{1, 0}_{g, J} \hspace{0.25em} \xi & : = & \frac{1}{2}\,  \left(
  \nabla_g \,\xi \;\,- \;\, J\, \nabla_g \,\xi \cdot J
  \right) \;,\\
  &  & \\
  \nabla^{0, 1}_{g, J} \hspace{0.25em} \xi & : = & \frac{1}{2}\,  \left(
  \nabla_g \,\xi \;\,+ \;\, J\, \nabla_g \,\xi \cdot J
  \right) \;,
\end{eqnarray*}
of the complexified Levi-Civita connection. The identities 
\begin{eqnarray*}
\nabla^{1, 0}_{g,
J} &=& \partial_{_{T_{X, J}}}^{\omega} \;,
\\
\\
\nabla^{0, 1}_{g, J} &=&
\bar{\partial}_{_{T_{X, J}}}\;,
\end{eqnarray*}
hold only in restriction to the real
tangent bundle $T_X$. In general hold the identities
\begin{eqnarray*}
  \nabla^{1, 0}_{g, J} \hspace{0.25em} \xi & = & \partial_{_{T^{1, 0}_{X,
  J}}}^{\omega} \xi^{1, 0} \;\,+ \;\,
  \overline{\partial_{_{T^{1, 0}_{X, J}}}^{\omega} \overline{\xi^{0, 1}}}
  \;,\\
  &  & \\
  \nabla^{0, 1}_{g, J} \hspace{0.25em} \xi & = & \bar{\partial}_{_{T^{1,
  0}_{X, J}}} \xi^{1, 0} \;\, + \;\,
  \overline{\bar{\partial}_{_{T^{1, 0}_{X, J}}} \overline{\xi^{0, 1}}}\\
  &  & \\
  & = & \bar{\partial}_{_{T^{1, 0}_{X, J}}} \xi^{1, 0} \;\, +\;\,
   \partial_{_{T^{0, 1}_{X, J}}} \xi^{0, 1} \;.
\end{eqnarray*}
Indeed $\nabla^{1, 0}_{g, J}$ and $\nabla^{0, 1}_{g, J}$ are
respectively the $\C$-linear extensions of $\partial_{_{T_{X, J}}}^{\omega}$
and $\bar{\partial}_{_{T_{X, J}}}$ to the complexified tangent bundle $T_X
\otimes_{_{\R}} \C$. In particular $\nabla^{0, 1}_{g, J}$ is independent of the metric
$g$.
Let now $(\zeta_k)_{k = 1}^n \subset \mathcal{O}(U, T^{1, 0}_{X, J})$ be a
local holomorphic frame and write $\xi \;=\; \xi'_k\, \zeta_k \;+\; \xi''_k\, 
\bar{\zeta}_k$. Then hold the local expressions
\begin{eqnarray*}
  \nabla^{1, 0}_{g, J} \hspace{0.25em} \xi & = & \left( \zeta_p
  \hspace{0.25em} . \hspace{0.25em} \xi'_k \;\, + \;\,
  A^p_{k, l} \, \xi'_l \right) \zeta^{\ast}_p \otimes \zeta_k
  \;\, + \;\, \left( \bar{\zeta}_p \hspace{0.25em} .
  \hspace{0.25em} \xi''_k \;\,+ \;\, \overline{A^p_{k,
  l}} \, \xi''_l \right) \bar{\zeta}^{\ast}_p \otimes
  \bar{\zeta}_k \;,\\
  &  & \\
  \nabla^{0, 1}_{g, J} \hspace{0.25em} \xi & = & ( \bar{\zeta}_p
  \hspace{0.25em} . \hspace{0.25em} \xi'_k) \hspace{0.25em}
  \bar{\zeta}^{\ast}_p \otimes \zeta_k \;\,+ \;\,
  (\zeta_p \hspace{0.25em} . \hspace{0.25em} \xi''_k) \hspace{0.25em}
  \zeta^{\ast}_p \otimes \bar{\zeta}_k \;,
\end{eqnarray*}
where $A_{k, l}^p : = (\zeta_p \hspace{0.25em} . \hspace{0.25em} \omega_{l,
\bar{r}}) \,\omega^{r, \bar{k}}$ and 
$$
\omega \;\;=\;\; \frac{i}{2} \;\omega_{k, \bar{l}} \;\zeta_k^{\ast}
\wedge \bar{\zeta}^{\ast}_l\;.
$$ 
In particular we find the following
expressions;
\begin{equation}
  \label{cov-der-Tg} \nabla_g \hspace{0.25em} \zeta_l \;\; =
  \;\; A^p_{k, l} \hspace{0.25em} \zeta^{\ast}_p \otimes \zeta_k
  \hspace{0.25em}, \hspace{2em} \nabla_g \hspace{0.25em} \bar{\zeta}_l
  \;\; = \;\; \overline{A^p_{k, l}} \hspace{0.25em} 
  \bar{\zeta}^{\ast}_p \otimes \bar{\zeta}_k \;.
\end{equation}
We infer the following identities for the dual connection on the complexified
cotangent bundle $T^{\ast}_{_X} \otimes_{_{\R}} \C$;
\begin{equation}
  \label{cov-der-cTg} \nabla_g \hspace{0.25em} \zeta^{\ast}_l 
\;\;=\;\;  - \;\, A^p_{l, k} \hspace{0.25em}
  \zeta^{\ast}_p \otimes \zeta^{\ast}_k \hspace{0.25em}, \hspace{2em} \nabla_g
  \hspace{0.25em} \bar{\zeta}^{\ast}_l \;\; = \;\; -
  \;\, \overline{A^p_{l, k}} \hspace{0.25em}  \bar{\zeta}^{\ast}_p
  \otimes \bar{\zeta}^{\ast}_k \;.
\end{equation}
\subsection{Complex operators acting on alternating tensors}\label{cpx-op}

Let $(F, h)$ be a hermitian vector bundle over a K\"ahler manifold $(M,J,
g)$ (of complex dimension $n$) equipped with a $h$-hermitian connection $\nabla_F$ and
let $\nabla$ be the induced hermitian connection over the hermitian vector
bundle
\[ \left( (T^{\ast}_M)^{\otimes p} \otimes_{_{\mathbbm{R}}} F, \left\langle
   \cdot, \cdot \right\rangle \right) \hspace{0.25em}, \]
where $\left\langle \cdot, \cdot \right\rangle$ is the induced hermitian
product. We define the complex operators
\begin{eqnarray*}
  2\, \partial_F & \assign & \nabla_F \;\,-\;\, J_{_F} \nabla_{F, J \cdot}\;,\\
  &  & \\
  2\, \bar{\partial}_F & \assign & \nabla_F \;\,+\;\, J_{_F} \nabla_{F, J \cdot}\;,
\end{eqnarray*}
acting on sections of $F$. We will denote with the same notations their
extension to the sheaf of $F$-valued differential forms. We notice the decomposition formula 
$\nabla_F=\partial_F+\bar{\partial}_F$. We define also the complex
operators
\begin{eqnarray*}
  2\, \nabla_{_J}^{1, 0} & \assign & \nabla \;\,-\;\, J_{_F} \nabla_{_{J \cdot}}\;,\\
  &  & \\
  2\, \nabla_{_J}^{0, 1} & \assign & \nabla \;\,+\;\, J_{_F} \nabla_{_{J \cdot}}\;,
\end{eqnarray*}
acting on $F$-valued tensors. Let now $(e_k)_k$ be a $J$-complex and $g
J$-orthonormal basis of $T_M $. We expand the formula (see the appendix in \cite{Pal1})
\begin{eqnarray*}
  \nabla_F^{\ast} \,\alpha & = & - \;\,p \tmop{Tr}_g \nabla \alpha\\
  &  & \\
  & = & - \;\,p\, \nabla \alpha \,(e_j, e_j, \cdot) \;\,-\;\, p\, \nabla \alpha \,(J e_j, J e_j,
  \cdot)\\
  &  & \\
  & = & - \;\,2\, p \,\nabla \alpha \,(\zeta_j, \bar{\zeta}_j, \cdot) \;\,-\;\, 2\, p\, \nabla
  \alpha \,( \bar{\zeta}_j, \zeta_j, \cdot) \;.
\end{eqnarray*}
The fact that the metric $g$ is K\"{a}hler implies that the operator $\nabla_{\bullet}\alpha$ preserves the be-degrees. Thus by be-degree reasons we infer the formulas
\begin{eqnarray}
  \partial_F^{\ast} \,\alpha & = & - \;\,2\, p\, \nabla \alpha \,( \bar{\zeta}_j, \zeta_j,
  \cdot) \;\;=\;\; -\;\, p \tmop{Tr}_g \nabla_{_J}^{0, 1} \alpha\;,\label{expr-adDel}
\\\nonumber
  &  & \\
  \bar{\partial}_F^{\ast} \,\alpha & = & -\;\, 2\, p\, \nabla \alpha \,(\zeta_j,
  \bar{\zeta}_j, \cdot) \;\;=\;\; -\;\, p \tmop{Tr}_g \nabla_{_J}^{1, 0} \alpha \;.\label{expr-adDbar}
\end{eqnarray}
We observe also the formulas
\begin{eqnarray}
  \partial_F \, \alpha \,(\xi_0, ..., \xi_p) & = & \sum_{j = 0}^p
  (- 1)^j  \hspace{0.25em} \nabla_{_J}^{1, 0} \alpha \,(\xi_j, \xi_0, ...,
  \hat{\xi}_j, ..., \xi_p) \;,\label{expr-del}
\\\nonumber
  &  & \\
  \bar{\partial}_F \,\alpha \,(\xi_0, ..., \xi_p) & = & \sum_{j = 0}^p (-
  1)^j \hspace{0.25em} \nabla_{_J}^{0, 1} \alpha \,(\xi_j, \xi_0, ...,
  \hat{\xi}_j, ..., \xi_p) \; .\label{expr-dbar}
\end{eqnarray}
It is sufficient to show the identities (\ref{expr-del}), (\ref{expr-dbar}) for a real basis $(\xi_j)$ of
$T_M$. So let $(z_1, \ldots, z_n)$ be $J$-holomorphic and $g$-geodesic
coordinates centered at a point $p_0$ and let $\xi_j \assign
\frac{\partial}{\partial x_j}$ or $\xi_j \assign
\frac{\partial}{\partial y_j}$, where $z_j \;=\; x_j \;+\; i\, y_j$. We infer
\begin{eqnarray*}
  \partial_F \, \alpha\, (\xi_0, ..., \xi_p) & = & \sum_{j = 0}^p
  (- 1)^j  \hspace{0.25em} \partial_{F, \xi_j} \left[ \alpha (\xi_0, ...,
  \hat{\xi}_j, ..., \xi_p) \right] \\
  &  & \\
  & = & \frac{1}{2} \, \sum_{j = 0}^p (- 1)^j   \left(
  \nabla_{F, \xi_j} \;\,-\;\, J_{_F} \nabla_{F, J \xi_j} \right) 
  \left[ \alpha (\xi_0, ..., \hat{\xi}_j, ..., \xi_p) \right]\;,
\end{eqnarray*}
and thus at the point $p_0$ hold the equalities
\begin{eqnarray*}
  \partial_F \,\alpha\, (\xi_0, ..., \xi_p) & = & \frac{1}{2} \,
  \sum_{j = 0}^p (- 1)^j \left( \nabla_{\xi_j} \alpha \;\,-\;\,
  J_{_F} \nabla_{J \xi_j} \alpha \right) (\xi_0, ..., \hat{\xi}_j, ..., \xi_p)
  \hspace{0.25em} \\
  &  & \\
  & = & \sum_{j = 0}^p (- 1)^j \hspace{0.25em} \nabla_{_J}^{1, 0} \alpha
  \,(\xi_j, \xi_0, ..., \hat{\xi}_j, ..., \xi_p) \;,
\end{eqnarray*}
since $\nabla_g \,\xi_j (p_0) = 0$. The proof of the identity (\ref{expr-dbar}) is quite similar.

\vspace{1cm}
\noindent
Nefton Pali
\\
Universit\'{e} Paris Sud, D\'epartement de Math\'ematiques 
\\
B\^{a}timent 425 F91405 Orsay, France
\\
E-mail: \textit{nefton.pali@math.u-psud.fr}


\begin{thebibliography}{000000}
\bibitem[Bes]{Bes} \textsc{Besse, A.L.,} 
\emph{Einstein Manifolds}, Springer-Verlag, 2007.
\bibitem[Pal1]{Pal1} \textsc{Pali, N.,} 
\emph{The total second variation of Perelman's $\mathcal{W}$-functional}, arXiv:1201.0969.
\bibitem[Pal2]{Pal2} \textsc{Pali, N.,} 
\emph{The Soliton-Ricci Flow over Compact Manifolds}, arXiv:math
\bibitem[Pal3]{Pal3} \textsc{Pali, N.,} 
\emph{The Soliton-K\"{a}hler-Ricci Flow over Fano Manifolds}, arXiv:math
\bibitem[Ho-Jo]{Ho-Jo}\textsc{Horn, R. A., Johnson, C.R}, \emph{Matrix Analysis}, Cambridge University
Press, 1986.
\end{thebibliography}
\end{document}